\theoremstyle{plain}
\newtheorem*{theorem*}{Theorem}
\newtheorem*{lemma*} {Lemma}
\newtheorem*{corollary*} {Corollary}
\newtheorem*{proposition*} {Proposition}
\newtheorem{theorem}{Theorem}[section]
\newtheorem{lemma}[theorem]{Lemma}
\newtheorem{corollary}[theorem]{Corollary}
\newtheorem{proposition}[theorem]{Proposition}
\newtheorem{prelemma*}{Prelemma}
\newtheorem{prelemma}[theorem]{Prelemma}
\theoremstyle{remark}
\newtheorem*{remark}{Remark}
\newtheorem*{definition}{Definition}
\newtheorem*{notation}{Notation}
\newtheorem*{example}{Example}
\theoremstyle{definition}
\def \R {\mathbb{R}}
\def \Z {\mathbb{Z}}
\def \C {\mathbb{C}}
\def \P {\mathbb{P}}
\def \Qt{\widetilde{Q}}
\def \Qtl{\widetilde{Q}_\lambda}
\def \Pt{\widetilde{P}}
\def \Ptl{\widetilde{P}_{\lambda}}
\def \Dn{\mathcal{D}(n)}
\def \bn{\begin{enumerate}}
\def \en{\end{enumerate}}
\def \bdm{\begin{displaymath}}
\def \edm{\end{displaymath}}
\def \bp{\begin{proof}}
\def\ep{\end{proof}}
\def\vd{\vdots}
\def\dd{\ddots}
\def\U+e{(U^+)^e}
\def\trm{\hspace{0.08in}\textrm{for} \hspace{0.08in}}
\def\be{\begin{equation}}
\def\ee{\end{equation}}
\def\Fdot{ F_{\mbox{\boldmath{.}}}}
\def\hs{\hspace{0.07in}}
\def\og{OG(n)}
\def\lg{LG(n)}
\def\In{\mathcal{I}_n}
\def\dn{\mathcal{D}(n)}
\def\mo{\mathcal{O}}
\begin{document}

\title [QUANTUM MULTIPLICATION OPERATORS]{QUANTUM MULTIPLICATION OPERATORS FOR LAGRANGIAN AND ORTHOGONAL GRASSMANNIANS}
\author{Daewoong Cheong}
\address{Department of Mathematical Sciences, KAIST\\
291 Daehak-ro, Yuseong-gu, Daejeon 34141, Korea}
\email{daewoongc@kias.re.kr}
\date{Nov. 10, 2014}
\subjclass[2010]{ 14N35, 05E15, 14J33, and 47N50 }

\begin{abstract}
In this article, we make a close analysis on quantum multiplication
operators on the quantum cohomology rings  of Lagrangian and
orthogonal Grassmannians, and give an explicit description on all
simultaneous eigenvectors and the corresponding eigenvalues for
these operators. As a result, we show that Conjecture $\mathcal{O}$
of Galkin, Golyshev and Iritani holds for these manifolds.
\end{abstract}
\maketitle

\section{INTRODUCTION}
Let $M$ be a  Fano manifold. The quantum cohomology ring
$qH^*(M,\C)$ is a certain deformation of the classical cohomology
ring $H^*(M,\C)$.  For $\sigma \in qH^*(M,\C)$, define the quantum
multiplication operator $[\sigma]$ on $qH^*(M)$ by
$[\sigma](\alpha)=\sigma\cdot \alpha.$ Denote the set of eigenvalues
of $[\sigma]$ by $\mathrm{Spec} ([\sigma])$. Suppose
$\mathrm{Spec}([c_1(M)])=\{a_1,...,a_m\},$ and
$T_0=\mathrm{Max}\{|a_1|, |a_1|,...,|a_m|\}$,  where
$c_1(M)=c_1(TM)$ is the first Chern class of the tangent bundle $TM$
of $M.$ Then we say that $M$ satisfies  Conjecture  $\mathcal{O}$ if
\bn
\item $T_0$ is an eigenvalue of $[c_1(M)]$.
\item  If $u$ is an eigenvalue of  $[c_1(M)]$ such that $|u|=T_0,$ then $u=T_0\xi $ for some $r$-th root of unity, where $r$ is the Fano index of $M.$
    \item The multiplicity of the eigenvalue $T_0$ is one.
\en

Originally, while Galkin, Golyshev and Iritani studied the
exponential asymptotic of solutions to the quantum differential
equations, they proposed two more conjectures called Gamma
Conjectures  $\mathrm{I}$, $\mathrm{II}$ which, informally, relate
the quantum cohomology of $M$ and the so-called Gamma class in terms
of differential equations. We refer to \cite{GGI} for details on
these materials. The importance of Conjecture $\mathcal{O}$ lies in
the fact that it `underlies' the Gamma Conjectures. Indeed, above
all,  the Gamma Conjecture $\mathrm{I}$ was stated under the
Conjecture $\mo$.  And under further assumption of the
semisimplicity of the quantum cohomology of $M$, the Gamma
Conjecture $\mathrm{II}$, which is a refinement of a part of
Dubrovin's conjecture (\cite{Du}), relates eigenvalues of the
operator $[c_1(M)]$ with members of a certain exceptional collection
 of the derived category
$\mathcal{D}^b_{\mathrm{coh}}(M)$ bijectively. Then, under the
semisimplicity of the quantum cohomology ring, the Gamma Conjecture
$\mathrm{I}$ can be viewed as a part of the Gamma conjecture II; it
relates the eigenvalue $T_0$ to the member $\mathcal{O}_M$ of the
aforementioned exceptional collection.

Let us mention for which manifolds Conjecture $\mo$ has been proved.
The Grassmannian  is the first manifold for which  Conjecture
$\mathcal{O}$ has been proved.  Indeed, in \cite{GGI}, Galkin,
Golyshev and Iritani recently proved Conjecture $\mathcal{O}$, and
then the Gamma Conjectures $\mathrm{I}$, $\mathrm{II}$ for the
Grassmannian.  In fact,  we noticed that for the Grassmannian,
Galkin and Golyshev  gave a very short proof of Conjecture
$\mathcal{O}$ in an earlier paper of theirs (\cite{GG1}), and
Rietsch gave an explicit description on the eigenvalues and
eigenvectors of multiplication operators on the quantum cohomology
ring of the Grassmannian (\cite{Riet1}) which contains all the
ingredient necessary to prove Conjecture $\mo$ for the Grassmannian.
For toric Fano manifolds, assuming Conjecture $\mathcal{O},$ Galkin,
Golyshev and Galkin proved Gamma conjectures $\mathrm{I}$ and
$\mathrm{II}$.

In this article, following Rietch, we obtain simultaneous
eigenvectors and corresponding eigenvalues for multiplication
operators on the quantum cohomology rings (specialized at $q=1$) of
Lagrangian and orthogonal Grassmannians,  which are homogeneous
varieties and in particular examples of Fano manifolds. Then we use
these to show that Conjecture $\mathcal{O}$ holds for Lagrangian and
orthogonal Grassmannians. Very recently, Li and the author worked
out Conjecture $\mo$ for general homogeneous varieties by a
different approach (\cite{CL1}).

Lastly, let us  explain what makes possible for these manifolds such
an explicit description of the  multiplication operators. In this
article, we heavily use one of  Peterson's results, which states
that the quantum cohomology ring of a homogeneous variety is
isomorphic with the coordinate ring of so-called Peterson variety
corresponding to the homogeneous variety (\cite{Pete1},
\cite{Pete2}, \cite{Riet2}, \cite{CH1}). Unlike general homogeneous
varieties, for these manifolds together with the Grassmannian, there
is a much simpler isomorphic variety that can replace the Peterson
variety. Thereby we identify points of the Peterson variety
(specialized at $q=1$). On the other hand, as the Grassmannian does,
they have symmetric polynomials representing the Schubert classes
and the quantization of these polynomials which serve as  regular
functions on the Peterson variety, too. These two facts provide us
with orthogonal formulas evaluated at  points of the Peterson
variety (cut out by $q=1$) which play a key role in explicitly
finding simultaneous eigenvectors and the associated eigenvalues.
Much of material needed in this article was studied in the author's
earlier paper \cite{CH1}. {\it \textbf{Acknowledgements}.} This
research was supported by Basic Science Research Program through the
National Research Foundation of Korea(NRF) funded by the Ministry of
Education(NRF-2016R1A6A3A11930321), and also  by the Ministry of
Science, ICT and Future Planning(NRF-2015R1A2A2A01004545).

\section{SYMMETRIC FUNCTIONS}
In this section, we review $\Qt$- and $\Pt$-polynomials of Pragacz
and Ratajski and  Schur polynomials.  References are \cite{Pra and
Rat 1} and \cite{Las and Pra} for the former polynomials, and
\cite{F} and \cite{Mac1} for the latter.
\subsection{Notations and definitions}
We begin with some notations concerning the labeling of symmetric
polynomials. A $partition$ $\lambda$ is a weakly decreasing sequence
$\lambda=(\lambda_1,\lambda_2,...,\lambda_m)$ of nonnegative
integers. A $Young \hs diagram $ is a collection of boxes, arranged
in left-justified rows, with a weakly decreasing number of boxes in
each row. To a partition $\lambda=(\lambda_1,...,\lambda_m),$ we
associate a Young diagram whose $i$-th row has $\lambda_i$ boxes.
The nonzero $\lambda_i$ in $\lambda=(\lambda_1,...,\lambda_m)$ are
called the $parts$ of $\lambda.$ The number of the parts of
$\lambda$ is called the $length$ of $\lambda,$ denoted by
$l(\lambda)$; the sum of the parts of $\lambda$ is called the
$weight$ of $\lambda$, denoted by $|\lambda|.$ For
$\lambda=(\lambda_1,...,\lambda_m)$  with $l(\lambda)=l$,  we
usually write $(\lambda_1,...,\lambda_l)$ for
$(\lambda_1,...,\lambda_l,0,...,0)$ if any confusion does not arise.
For positive integers $m$ and  $n,$ denote by $\mathcal{R}(m,n)$ the
set of all partitions whose Young diagram fits inside an $m\times n$
diagram, which is the Young diagram of the partition $(n^m).$ A
partition $\lambda=(\lambda_1,...,\lambda_l,...,\lambda_m)\in
\mathcal{R}(m,n)$ is called strict if $\lambda_1>\cdots>\lambda_l$
and $\lambda_{l+1}=\cdots \lambda_m=0$, where $l=l(\lambda).$ Denote
by $\mathcal{D}(m,n)$ the set of all strict partitions in
$\mathcal{R}(m,n).$ If $m=n$, then we write $\mathcal{R}(n)$ and
$\mathcal{D}(n)$ for $\mathcal{R}(n,n)$ and $\mathcal{D}(n,n)$,
respectively. If $\lambda \in \mathcal {D}(n)$, denote by
$\widehat{\lambda}$ the partition whose parts complements the parts
of $\lambda$ in the set $\{1,...,n\}$.  For $\lambda \in
\mathcal{R}(m,n),$ the $conjugate$ of $\lambda$ is the partition
$\lambda^{t} \in \mathcal{R}(n,m)$ whose Young diagram is the
transpose of that of $\lambda.$

\subsection{Symmetric functions}
Let $X:=(x_1,...,x_n)$ be the $n$-tuple of variables. For
$i=1,...,n,$ let $H_i(X)$ (resp. $E_i(X)$) be the $i$-th complete
(resp. elementary) symmetric function. Then for any partition
$\lambda,$ the Schur polynomial $S_{\lambda}(X)$ is defined by
$$S_{\lambda}(X):=\textrm{Det}[H_{\lambda_i+j-i}(X)]_{1\leq i,j
\leq n }=\textrm{Det}[E_{\lambda_{i}^t+j-i}(X)]_{1\leq i,j \leq n
},$$ where $H_0(X)=E_0(X)=1$ and $H_{k}(X)=E_k(X)=0$ for $k<0.$

We define $\Qt$- and $\Pt$-polynomials both of which are indexed by
elements of $\mathcal{R}(n)$. For $i=1,...,n,$  set
$\Qt_i(X):=E_i(X)$, the $i$-th elementary symmetric function.

Given two nonnegative integers $i$ and $ j$ with $i\geq j$, define
\begin{displaymath}
\Qt_{i,j}(X)=\Qt_i(X)\Qt_j(X)+2\sum_{k=1}^{j}(-1)^k
\Qt_{i+k}(X)\Qt_{j-k}(X).
\end{displaymath}

 Finally, for any partition
$\lambda$ of length $l=l(\lambda)$, not necessarily strict, and for
$r=2\lfloor(l+1)/2\rfloor,$ let $B_{\lambda}^r=[B_{i,j}]_{1\leq
i,j\leq r}$  be the skew symmetric matrix defined by
$B_{i,j}=\Qt_{\lambda_i,\lambda_j}(X)$ for $i<j$. We set
\begin{displaymath}
\Qt_{\lambda}(X)=\textrm{Pfaffian}(B_{\lambda}^r).
\end{displaymath}
Given $\lambda$, not necessarily strict,  $\Ptl$ is defined by
$$\Ptl(X):=2^{-l(\lambda)}\Qtl(X).$$

\begin{proposition}(\cite{Pra and Rat 1})\label{property-of-Qt-poly}
The $\Qt$-polynomials satisfy the following properties.
\begin{enumerate}

\item For $i=1,...,n,$  $\Qt_{i,i}(X)=E_i(x_1^2,...,x_n^2).$
\item
For any $\lambda \in \mathcal{D}(n),$
$$\Qtl(X)\Qt_n(X)=\Qt_{(n,\lambda_1,...,\lambda_l)}(X).$$
\label{fac:Qspe}
\end{enumerate}
\end{proposition}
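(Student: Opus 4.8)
The plan is to verify the two identities directly from the Pfaffian definition of $\Qt_\lambda$, reducing each to an elementary identity among the one-row polynomials $\Qt_i(X) = E_i(X)$.

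For part (1), I would start from the definition $\Qt_{i,i}(X) = \Qt_i(X)^2 + 2\sum_{k=1}^{i}(-1)^k\Qt_{i+k}(X)\Qt_{i-k}(X)$ and recognize the right-hand side as the coefficient extraction from a generating-function identity. Writing $E(t) = \sum_{i\geq 0} E_i(X)t^i = \prod_{j=1}^n (1+x_j t)$, the product $E(t)E(-t) = \prod_j (1 - x_j^2 t^2)$ is precisely $\sum_i E_i(x_1^2,\dots,x_n^2)(-1)^i t^{2i}$ up to sign bookkeeping; comparing the coefficient of $t^{2i}$ on both sides of $E(t)E(-t)$ gives $\sum_{k=-i}^{i}(-1)^{i-k}\Qt_{i-k}\Qt_{i+k}$ after reindexing, which rearranges into $\Qt_i^2 + 2\sum_{k=1}^i (-1)^k \Qt_{i+k}\Qt_{i-k}$ once one matches signs. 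So part (1) is a one-line generating-function computation; I would just need to be careful about the sign $(-1)^i$ versus the target $E_i(x_1^2,\dots,x_n^2)$ and confirm they agree (they do, because $\prod(1-x_j^2 t^2) = \sum (-1)^i E_i(x^2) t^{2i}$ and the cross terms in $E(t)E(-t)$ carry a matching $(-1)^i$).

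For part (2), the strategy is to expand the Pfaffian of $B_{(n,\lambda_1,\dots,\lambda_l)}^r$ along its first row. Since $\lambda \in \mathcal{D}(n)$ is strict of length $l$, the partition $(n,\lambda_1,\dots,\lambda_l)$ has length $l+1$, so $r = 2\lfloor (l+2)/2\rfloor$, which is either $l+1$ or $l+2$ depending on the parity of $l$. In the Pfaffian expansion along the first row, each term is $B_{1,j} = \Qt_{n,\lambda_{j-1}}(X)$ times the Pfaffian of the complementary minor. The key input is Proposition \ref{property-of-Qt-poly} together with the fact that $n$ is the largest possible part in $\mathcal{R}(n)$: I expect $\Qt_{n,j}(X) = \Qt_n(X)\Qt_j(X)$ for all $j \leq n$, i.e.\ the correction sum $2\sum_{k=1}^j (-1)^k \Qt_{n+k}\Qt_{j-k}$ vanishes because $\Qt_{n+k}(X) = E_{n+k}(X) = 0$ for $k \geq 1$. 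Granting this, every entry in the first row of $B_{(n,\lambda)}^r$ factors as $\Qt_n(X)$ times the corresponding entry $\Qt_{\lambda_{j-1}}(X) = \Qt_{0,\lambda_{j-1}}$-type term, and the Pfaffian pulls out a single factor of $\Qt_n(X)$, leaving $\Qt_\lambda(X)$.

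The main obstacle is the parity/size bookkeeping in part (2): one must check that the matrix $B_{(n,\lambda)}^r$ really does reduce cleanly upon first-row expansion, handling separately the case $l$ even (where $r$ jumps from $l$ to $l+2$ and a zero-padded row/column enters) and $l$ odd (where $r = l+1$ and the matrix is exactly the bordered one). In the even case I would need to confirm that the extra zero row/column introduced by the padding convention $\Qt_0 = 1$, $\Qt_{i,0} = \Qt_i$ is consistent, i.e.\ that $\Qt_{\lambda_i,0}(X) = \Qt_{\lambda_i}(X)$, so that bordering by the index $n$ and bordering by a trailing $0$ commute appropriately. Once the reduction $\Qt_{n,j} = \Qt_n \Qt_j$ is in hand and the matrix sizes are matched, the result follows from multilinearity of the Pfaffian in the first row together with the cofactor expansion; I would present this as a short lemma on $\Qt_{n,j}$ followed by the one-step Pfaffian expansion.
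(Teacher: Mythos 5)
The paper itself gives no argument for this proposition --- it is quoted directly from Pragacz--Ratajski \cite{Pra and Rat 1} --- so your direct verification is necessarily a different route, and in substance it is the standard one. Part (1) as you present it is complete and correct: comparing the coefficient of $t^{2i}$ in $E(t)E(-t)=\prod_{j}(1-x_j^2t^2)$ yields $(-1)^i\Qt_{i,i}(X)$ on one side and $(-1)^iE_i(x_1^2,\dots,x_n^2)$ on the other, and the signs do match. For part (2) your key observation is also correct and is the real crux: $\Qt_{n,j}(X)=\Qt_n(X)\Qt_j(X)$ for $0\le j\le n$, because every correction term contains $\Qt_{n+k}(X)=E_{n+k}(x_1,\dots,x_n)=0$ for $k\ge 1$.

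The one place where the write-up is too quick is the sentence ``the Pfaffian pulls out a single factor of $\Qt_n(X)$, leaving $\Qt_\lambda(X)$.'' Linearity of the Pfaffian in the first row does give $\Qt_{(n,\lambda)}(X)=\Qt_n(X)\cdot\mathrm{Pf}(\tilde B)$, where $\tilde B$ is the $r\times r$ matrix whose first row consists of the one-row polynomials $\Qt_{\lambda_1},\dots,\Qt_{\lambda_l}$ (followed by $\Qt_0=1$ when $l$ is even) and whose remaining block is $[\Qt_{\lambda_a,\lambda_b}]$. But $\tilde B$ is not the defining matrix $B^{r'}_\lambda$ of $\Qt_\lambda$, so $\mathrm{Pf}(\tilde B)=\Qt_\lambda(X)$ is an additional identity, not a consequence of multilinearity alone. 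Expanding $\tilde B$ along its first row and writing $\lambda\setminus\lambda_m$ for $\lambda$ with the part $\lambda_m$ removed, one finds
$$\mathrm{Pf}(\tilde B)=\sum_{m=1}^{l}(-1)^{m-1}\Qt_{\lambda_m}(X)\,\Qt_{\lambda\setminus\lambda_m}(X)\ \ (l\ \text{odd}),\qquad \mathrm{Pf}(\tilde B)=\Qt_\lambda(X)+\sum_{m=1}^{l}(-1)^{m-1}\Qt_{\lambda_m}(X)\,\Qt_{\lambda\setminus\lambda_m}(X)\ \ (l\ \text{even}).$$
For $l$ odd the alternating sum is exactly the expansion of the defining Pfaffian of $\Qt_\lambda$ (parts $(\lambda_1,\dots,\lambda_l,0)$) along the column of the zero part, so a sign check finishes it; for $l$ even you must prove the cancellation $\sum_{m=1}^{l}(-1)^{m-1}\Qt_{\lambda_m}\Qt_{\lambda\setminus\lambda_m}=0$, which is genuine content --- it can be obtained, for instance, by bordering $[\Qt_{\lambda_a,\lambda_b}]$ with two identical rows and columns $(\Qt_{\lambda_1},\dots,\Qt_{\lambda_l})$, noting that such a matrix is congruent to one with a zero row and hence has vanishing Pfaffian, and expanding along the last column. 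You did flag the parity bookkeeping as the main obstacle, and nothing in your plan fails, but the ``short lemma'' you promise must contain exactly these two checks (the odd-case sign identification and the even-case cancellation), not only the factorization $\Qt_{n,j}=\Qt_n\Qt_j$.
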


Let $S_n=<s_1,...,s_{n-1}>$ be the symmetric group generated by the
simple transpositions $s_1,...,s_{n-1}$. The Weyl group $W_n$ for
Lie type $C_n$ is an extension of the symmetric group $S_n$ by $s_0$
such that the following relations hold
$$s_0^2=1, \hspace{0.1in}s_0\cdot s_1 \cdot s_0 \cdot s_1=s_1 \cdot s_0\cdot s_1\cdot s_0,
\hspace{0.1in} s_0 \cdot s_i=s_i\cdot s_0 \hspace{0.09in}
\mathrm{for}\hspace{0.09in} i\geq 2.$$ Recall that an element $w$ of
$S_n$ can be represented by a sequence $w=(w_1,....w_n)$ of numbers
$1,...,n$,  e.g., $s_i=(1,...,i,i+1,i, i+2,...,n)$  for
$i=1,...,n-1.$ In contrast, an element of  $W_n$ can be represented
by a permutation with bars $w=(w_1,...,w_n)$, e.g.,
$s_0=(\bar{1},2,...,n)$. With this notation, the multiplication in
$W_n$ is given as follows: For $(w_1,...,w_n)\in W_n$,
$$(w_1,...,w_n)\cdot s_0=(\bar{w}_1,w_2,...,w_n);$$
$$(w_1,...,w_n)\cdot
s_i=(w_1,...,w_{i-1},w_{i+1},w_i,w_{i+2},...,w_n) \hspace{0.1in}
\mathrm{for}\hspace{0,07in} i=1,...,n-1.$$

For $w=(w_1,...,w_n)\in W_n$ and   $X=(x_1,...,x_n)$, let $X^w$ be
the $n$-tuple $(y_1,...,y_n)$ of `signed variables' $\pm x_1,...,\pm
x_n$, where $y_k=x_{w_k}$ ( resp. $-x_{w_k}$) if $w_k$ is unbarred
(resp. barred). This induces an action of $W_n$ on the ring of
polynomials in $x_1,...,x_n$, which is defined as follows: For $w\in
W_n$ and a polynomial $P(X)$ in $x_1,...,x_n$,  $$w\cdot
P(X):=P(X^w).$$

The following will be used to derive orthogonal formulas for the
Lagrangian and orthogonal Grassmannians later.
\begin{proposition}(\cite{Las and Pra})\label{Key}
 We have the following identity of polynomials
\begin{displaymath}\sum_{\lambda\in \mathcal{D}(n)}\Pt_{\lambda}(X^w)\Pt_{\hat{ \lambda}}(X)=\left\{\begin{array}{cc}
S_{\rho_n}(X)& \textrm{if} \hspace{0.05in} w \in S_n,\\
0  & \textrm{if} \hspace{0.05in} w \in W_n\setminus S_n.
\end{array}\right.
\end{displaymath}
\end{proposition}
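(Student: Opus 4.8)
The plan is to prove Proposition~\ref{Key} by exhibiting the sum $F(X^w,X):=\sum_{\lambda\in\mathcal{D}(n)}\Pt_\lambda(X^w)\Pt_{\hat\lambda}(X)$ as a specialization of the universal Cauchy-type identity for $\Pt$-polynomials. Recall that Pragacz--Ratajski (and Lascoux--Pragacz) established an identity of the form $\sum_{\lambda}\Pt_\lambda(X)\Pt_{\lambda^{\vee}}(Y)=\prod(\text{factors involving }x_i\pm y_j)$ on a suitable set of strict partitions; the first step is to recall that identity precisely and to notice that in our situation both the first and the second factor use the \emph{same} underlying alphabet (up to the signed reindexing $X\mapsto X^w$), with $\hat\lambda$ playing the role of the complementary partition $\lambda^\vee$ inside $\mathcal{R}(n)$. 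So I would first rewrite the left side as a single Pfaffian (or determinant) in the $2n$ variables $x_1,\dots,x_n$ and $(X^w)_1,\dots,(X^w)_n=\pm x_1,\dots,\pm x_n$, using the Pfaffian definition of $\Qt_\lambda$ together with $\Ptl=2^{-l(\lambda)}\Qtl$.

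The second step is the evaluation. Once the sum is identified with $\prod_{i<j}$-type product or with a single Schur polynomial via the Jacobi--Trudi / Giambelli formalism, I would substitute $y_k=\pm x_k$. When $w\in S_n$ all signs are $+$, so $X^w$ is just a permutation of $X$; the product $\prod_{i<j}(x_i+y_j)\cdots$ degenerates, but crucially the \emph{diagonal} terms $x_i+y_i=2x_i$ (or the relevant combination) survive and assemble into $S_{\rho_n}(X)$, where $\rho_n=(n,n-1,\dots,1)$ — this is exactly the "staircase" Schur polynomial, and one recognizes it because $S_{\rho_n}(X)=\prod_{i<j}(x_i+x_j)$ in $n$ variables (or $\prod_i x_i\cdot\prod_{i<j}(x_i+x_j)$ depending on the normalization), which is the natural output of the Cauchy specialization. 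When $w\in W_n\setminus S_n$, at least one variable is negated: some $y_{k}=-x_k$, which forces a vanishing factor (a repeated row/column in the Pfaffian, or a factor $x_k+y_k=0$ appearing in every term after the symmetrization), so the whole sum collapses to $0$. I would organize this as: (a) reduce to a single $s_0$-flip by the $W_n$ action being generated by $S_n$ and $s_0$, checking $F(X^{ws_0},X)$ in terms of $F(X^w,X)$; (b) show $F$ is $S_n$-invariant in the first slot so that the $S_n$ case is uniform; (c) compute one base case.

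The main obstacle I anticipate is bookkeeping in the Pfaffian manipulation: the $\Qt_{i,j}$ are defined by the quadratic expression $\Qt_i\Qt_j+2\sum_k(-1)^k\Qt_{i+k}\Qt_{j-k}$, and assembling $\sum_\lambda \Qt_\lambda(X^w)\Qt_{\hat\lambda}(X)$ into a closed form requires the "Pfaffian summation" or "factorization" lemma of Lascoux--Pragacz, whose hypotheses (on the alphabet, on which $\lambda$ range over $\mathcal{D}(n)$ versus all strict partitions in a box, and on the pairing $\lambda\leftrightarrow\hat\lambda$) must be matched exactly to the statement here. A secondary subtlety is the power-of-$2$ accounting: each $\Pt_\lambda$ contributes $2^{-l(\lambda)}$ and each $\Pt_{\hat\lambda}$ contributes $2^{-l(\hat\lambda)}$, and since $l(\lambda)+l(\hat\lambda)=n$ the total prefactor $2^{-n}$ is uniform across all terms, so it can be pulled out — but one must confirm this so that the surviving $2x_i$ factors in the $S_n$ case exactly cancel the $2^{-n}$ to leave $S_{\rho_n}(X)$ with the right normalization. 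Once those two points are pinned down, the vanishing for $w\notin S_n$ and the identification of the staircase Schur polynomial for $w\in S_n$ both follow by direct substitution, and I would present the argument in the order: recall the Lascoux--Pragacz factorization, verify the alphabet/indexing match, track the $2$-powers, then split into the two cases.
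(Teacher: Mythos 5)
The paper does not actually prove Proposition \ref{Key}: it is imported from Lascoux--Pragacz \cite{Las and Pra}, so a self-contained proof would have to reproduce (some version of) their operator-calculus/Pfaffian argument. Your sketch does not do this, and its central tool does not exist in the form you recall: there is no two-alphabet ``Cauchy-type'' identity expressing the complementary-partition sum $\sum_{\lambda\in\mathcal{D}(n)}\Pt_{\lambda}(Y)\Pt_{\hat{\lambda}}(X)$ as a product of factors of the form $x_i\pm y_j$. Already for $n=2$, with independent alphabets $X=(x_1,x_2)$, $Y=(y_1,y_2)$, a direct computation from the definitions (using $\Qt_{2,1}=E_2E_1-2E_3$ and $E_3=0$ in two variables) gives
\[
\sum_{\lambda\in\mathcal{D}(2)}\Pt_{\lambda}(Y)\Pt_{\hat{\lambda}}(X)
=\tfrac14\bigl(E_1(X)+E_1(Y)\bigr)\bigl(E_2(X)+E_2(Y)\bigr),
\]
whose second factor $x_1x_2+y_1y_2$ is an irreducible quadric, not a product of linear forms; and for $n\ge 3$ even this pattern cannot persist, since at $Y=X$ it would yield $\prod_i E_i(X)$, which differs from $S_{\rho_n}(X)$. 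The genuine Cauchy identity for Schur $P$/$Q$-functions pairs $\lambda$ with itself (not with $\hat{\lambda}$) and is a power-series identity, so it is not the statement your plan needs.

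Because no such closed product form is available, both halves of your case analysis are unsupported. The vanishing for $w\in W_n\setminus S_n$ cannot be deduced from ``a factor $x_k+y_k=0$'': under a general signed permutation the negated variable need not be matched with itself (e.g.\ $y_1=-x_2$), and without a factorization there is no factor to kill; your reduction (a) to a single $s_0$-flip is likewise unjustified, as the sum is not multiplicative in $w$ and $F(X^{ws_0},X)$ is not determined by $F(X^w,X)$ in any evident way. In the $w\in S_n$ case, symmetry of the $\Pt_{\lambda}$ reduces the claim to the single identity $\sum_{\lambda\in\mathcal{D}(n)}\Pt_{\lambda}(X)\Pt_{\hat{\lambda}}(X)=S_{\rho_n}(X)$, which is precisely the nontrivial content and is nowhere established in your sketch. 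The one fully correct point is the power-of-two bookkeeping ($l(\lambda)+l(\hat{\lambda})=n$, giving a uniform factor $2^{-n}$). To repair the argument you would need the actual Lascoux--Pragacz machinery (type-$C$ divided differences/maximal symmetrizers, or the vanishing and factorization properties of the $\Qt$-polynomials under the $W_n$-action), rather than a specialization of a Cauchy-type product.
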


Let  $\Lambda_n$  be the algebra over $\Z$ of symmetric functions in
$x_1,...,x_n$. Let $\Lambda_n^\prime$ the algebra over $\Z$ of
symmetric polynomials  generated by $\Pt_{\lambda}$ with $\lambda
\in \mathcal{R}(n).$ Note that $\Lambda_n$ is spanned by the
polynomials $\Qt_\lambda$ with $\lambda$ with $\lambda \in
\mathcal{R}(n)$, and so $\Lambda_n^\prime$ is isomorphic to
$\Lambda_n$ as $\Z$-modules.

\section{QUANTUM COHOMOLOGY RINGS}

\subsection{Lagrangian and orthogonal Grassmannians}

Let  $E=\C^N$ be a complex vector space equipped with a
nondegenerate (skew) symmetric bilinear form $Q.$ A subspace
$W\subset E$ is called $isotropic$ if $Q(v,w)=0$ for all $v,w \in
W.$ A maximal isotropic subspace of $E$ is of (complex) dimension
$\lfloor\frac{N}{2}\rfloor$. In particular, when $N=2n$ and $Q$ is a
skew symmetric form, such a maximal subspace is called {\it
Lagrangian}. Let $\lg$ be the parameter space of Lagrangian
subspaces in $E$. Then $\lg$ is a homogeneous variety
$Sp_{2n}(\C)/P_n$ of complex dimension $n(n+1)/2$, where $P_n$ is
the maximal parabolic subgroup of the symplectic group $Sp_{2n}(\C)$
associated with the `right end root' in the Dynkin Diagram of Lie
type $C_n$,  e.g., on Page $58$ of \cite{Hu1}.

For the case when $N=2n+1$ and $Q$ is a symmetric bilinear form, let
$OG(n)$ be the parameter space of maximal isotropic subspaces in
$E.$  Then $OG(n)$ is a homogeneous variety $SO_{2n+1}(\C)/P_{n}$ of
dimension $n(n+1)/2$, where $P_{n}$ is the maximal parabolic
subgroup of $SO_{2n+1}(\C)$ associated with a `right end root' of
the Dynkin diagram of type $B_n$   (on Page $58$ of \cite{Hu1}).
Traditionally, the manifold $OG(n)$ is called an $odd$ $orthogonal$
$Grassmannian$. There is an `even counterpart', $even$ $orthogonal$
$Grassmannian,$ written as $SO_{2n+2}(\C)/P_{n+1}$, where $P_{n+1}$
is the maximal parabolic subgroup of $SO_{2n+2}(\C)$ associated with
the `right end root' of the Dynkin diagram of Lie type $D_{n+1}$ (on
Page $58$ of \cite{Hu1}). It is well-known that they are isomorphic
(projectively equivalent) to each other. Therefore, in this paper,
we treat only $OG(n)$, and we will go without the adjective `odd'.

\subsection{Quantum cohomology of $LG(n)$} To describe the quantum cohomology of $\lg,$ we begin with the definition of
Schubert varieties of $\lg$. Given a complex vector space $E$ of
dimension $2n$ with a nondegenerate skew-symmetric form, fix a
complete isotropic flag $ F_{\mbox{\boldmath{.}}}$ of subspaces
$F_i$ of $E$:
$$\Fdot : 0=F_0\subset F_1\subset \cdots \subset F_n\subset E,$$
where dim$(F_i)=i$  for each $i,$ and $F_n$ is Lagrangian. To
$\lambda \in \Dn,$ we associate the Schubert variety
$X_{\lambda}(\Fdot)$  defined as the locus of $ \Sigma \in \lg$ such
that \be \label{schubert} \textrm{dim}(\Sigma \cap
F_{n+1-\lambda_i})\geq i \trm i=1,...,l(\lambda).\ee Then
$X_{\lambda}(\Fdot)$ is a subvariety of $\lg$ of complex codimension
$| \lambda |.$ The Schubert class associated with $\lambda$ is
defined to be the cohomology class, denoted by $\sigma_{\lambda}$,
Poincar\'{e} dual to the homology class $[X_{\lambda}(\Fdot)]$, so
 $\sigma_\lambda \in H^{2|\lambda|}(\lg,\Z).$ It is a classical
result that
$\{\sigma_{\lambda}\hspace{0.05in}|\hspace{0.05in}\lambda\in \dn\}$
forms an additive basis for $H^{*}(\lg,\Z)$. It is conventional to
write $\sigma_i$ for $\sigma_{(i)}$. If $$0\rightarrow S\rightarrow
\mathcal{E}\rightarrow Q\rightarrow 0$$ denotes the short exact
sequence of tautological vector bundles on $LG(n)$, then $\sigma_i$
equals the $i$-th Chern class $c_i(Q)$ of the tautological quotient
bundle $Q$ on $LG(n)$, where $\mathcal{E}$ denotes the trivial
bundle $\mathcal{E}=LG(n)\times \C^{2n}.$ It is known that there is
a surjective ring homomorphism from $\Lambda_n \rightarrow
H^*(\lg,\Z)$ sending $\Qt_\lambda(X)$ to $\sigma_\lambda$ which has
the kernel generated by $\Qt_{i,i}$ with $i=1,...,n.$

A rational map of degree $d$ to $\lg$ is a morphism
$f:\mathbb{P}^1\rightarrow \lg$ such that
$$\int_{LG(n)}f_*[\P^1]\cdot \sigma_1=d.$$ Given an integer $d\geq0$
and partitions $\lambda, \mu,$ $\nu\in \mathcal{D}(n)$, the
Gromov-Witten invariant
$<\sigma_{\lambda},\sigma_{\mu},\sigma_{\nu}>_d$  is defined as the
number of rational maps $f:\mathbb{P}^1\rightarrow \lg$ of degree
$d$ such that $f(0)\in X_{\lambda}(\Fdot),$ $f(1)\in X_{\mu}(
G_{\mbox{\boldmath{.}}}),$ and $f(\infty)\in X_{\nu}(
H_{\mbox{\boldmath{.}}})$, for given isotropic flags $\Fdot$,
$G_{\mbox{\boldmath{.}}},$ and $H_{\mbox{\boldmath{.}}}$ in general
position. We remark that
$<\sigma_{\lambda},\sigma_{\mu},\sigma_{\nu}>_d$ is $0$ unless
$|\lambda|+|\mu|+|\nu|=\mathrm{dim}(LG(n))+(n+1)d$. The quantum
cohomology ring $qH^{*}(\lg,\Z)$ is isomorphic
 to $H^{*}(\lg,\Z)\otimes \Z[q]$
as  $\Z[q]$-modules, where $q$ is a formal variable of degree
$(n+1)$ and called the $quantum$ $variable$. The multiplication in
$qH^{*}(\lg,Z)$ is given by the relation \be~\label{multi}
\sigma_\lambda \cdotp
\sigma_\mu=\sum<\sigma_\lambda,\sigma_\mu,\sigma_{\hat{\nu}}>_d
\sigma_\nu q^d,\ee  where the sum is taken over $d\geq0$ and
partitions $\nu$ with $|\nu|=|\lambda|+|\mu|-(n+1)d.$\\

Set $X^+:=(x_1,...,x_{n+1})$, and let $\tilde{\Lambda}_{n+1}$ be the
subring of $\Lambda_{n+1}$ generated by the polynomials $\Qt_i(X^+)$
for $i\leq n$ together with the polynomial $2\Qt_{n+1}(X^+).$

Now we are ready to give a presentation of the quantum cohomology
ring of $\lg$ and the quantum Giambelli formula due to Kresch and
Tamvakis.
\begin {theorem}[\cite{KT2}] \label{quantum coho:lg}
There is a surjective ring homomorphism from $\tilde{\Lambda}_{n+1}$
to $qH^*(\lg,\Z)$ sending $\Qt_\lambda(X^+)$ to $\sigma_\lambda$ for
all $\lambda \in \mathcal{D}(n)$ and $2\Qt_{n+1}(X^+)$ to $q$, with
the kernel generated by $\Qt_{i,i}$ for $1\leq i \leq n.$ The ring
$qH^*(\lg,\Z)$ is presented as a quotient of the polynomial ring
$\Z[\sigma_{1},...,\sigma_n, q]$ by the relations
$$\sigma_i^2+2\sum_{k=1}^{n-i}(-1)^k\sigma_{i+k}\sigma_{i-k}=(-1)^{n-i}\sigma_{2i-n-1}q$$
for $1\leq i \leq n.$ The Schubert class in this presentation is
given by quantum Giambelli formula
\begin{displaymath}\sigma_{i,j}=\sigma_{i}\sigma_j+2\sum_{k=1}^{n-i}(-1)^k\sigma_{i+k}
\sigma_{j-k}+(-1)^{n+1-i}\sigma_{i+j-n-1}q\end{displaymath} for
$i>j>0,$ and

\begin{displaymath}\sigma_\lambda=\mathrm{Pfaffian}(\sigma_{\lambda_i,\lambda_j})_{1\leq <i<j\leq r},
\end{displaymath}
where quantum multiplication is employed throughout.

\end{theorem}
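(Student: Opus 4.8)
The plan is to reduce the whole statement to one geometric ingredient — a quantum Pieri rule for $\sigma_p*\sigma_\lambda$ — and then obtain the ring presentation and the Giambelli formula by commutative algebra, using throughout that $qH^*(\lg,\Z)$ is a free $\Z[q]$-module (of rank $|\mathcal{D}(n)|=2^n$) deforming $H^*(\lg,\Z)$. Concretely I would proceed in four steps: (1) prove the quantum Pieri rule; (2) deduce that the stated quadratic relations hold, and hence that the displayed map $\tilde\Lambda_{n+1}\to qH^*(\lg,\Z)$ is well defined; (3) upgrade this to the presentation by a rank count over $\Z[q]$; (4) prove quantum Giambelli by induction on the number of parts.

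\emph{Step 1: the quantum Pieri rule.} For $1\le p\le n$ and $\lambda\in\mathcal{D}(n)$ the goal is a formula $\sigma_p*\sigma_\lambda=\sum_\mu 2^{e(\lambda,\mu)}\sigma_\mu+q\sum_\nu 2^{e'(\lambda,\nu)}\sigma_\nu$ in which the first (degree $0$) sum is the classical Pieri rule on $\lg$ and the second (degree $1$) sum is explicit, with no contribution from degrees $d\ge2$. The vanishing of the Gromov--Witten invariants $\langle\sigma_p,\sigma_\lambda,\sigma_{\hat\nu}\rangle_d$ for $d\ge2$ would be proved geometrically: since $\sigma_p=c_p(Q)$ is a special Schubert class, one shows on the incidence variety of degree-$d$ rational curves meeting general translates of $X_p(\Fdot)$, $X_\lambda(\Fdot)$ and $X_{\hat\nu}(\Fdot)$ that no such curve can be forced for $d\ge2$ — a dimension count in the spirit of the kernel--span technique for ordinary Grassmannians, using that $\lg\subset Gr(n,2n)$ and that lines and conics on $\lg$ are well understood. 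For $d=1$ one analyses directly when a line on $\lg$ can meet the three Schubert varieties, using the isotropic flag; this is where one reads off that the degree-$1$ term is nonzero precisely when $\lambda_1=n$ and computes the coefficients $2^{e'(\lambda,\nu)}$. \textbf{This step is the main obstacle:} it is the only genuinely geometric input, and getting the enumerative geometry and the vanishing for $d\ge2$ under control is the heart of the matter; the remaining steps are essentially formal once it is available.

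\emph{Steps 2 and 3: the relations and the presentation.} In $\tilde\Lambda_{n+1}$ one has the polynomial identity $\widetilde{Q}_{i,i}(X^+)=\widetilde{Q}_i(X^+)^2+2\sum_{k=1}^{i}(-1)^k\widetilde{Q}_{i+k}(X^+)\widetilde{Q}_{i-k}(X^+)$, where $\widetilde{Q}_j(X^+)=E_j(X^+)=0$ for $j>n+1$ and $2\widetilde{Q}_{n+1}(X^+)$ is the designated preimage of $q$; hence the only surviving ``overflow'' term is $k=n+1-i$, which after the substitution $\widetilde{Q}_\lambda(X^+)\mapsto\sigma_\lambda$ and evaluation of the products $\sigma_{i+k}*\sigma_{i-k}$ via Step 1 contributes exactly $(-1)^{n+1-i}\sigma_{2i-n-1}q$. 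Thus the image of $\widetilde{Q}_{i,i}(X^+)$ equals $\sigma_i*\sigma_i+2\sum_{k=1}^{n-i}(-1)^k\sigma_{i+k}*\sigma_{i-k}-(-1)^{n-i}\sigma_{2i-n-1}q$ and, because every monomial that occurs has classical degree $2i\le 2n<2(n+1)$ so that no $q^{\ge2}$ can appear, Step 1 shows this class is $0$ in $qH^*(\lg,\Z)$; this is precisely the assertion that the $\widetilde{Q}_{i,i}$ map to $0$ and that $\tilde\Lambda_{n+1}\to qH^*(\lg,\Z)$ is a well-defined $\Z[q]$-algebra homomorphism with the stated kernel. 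Now let $A_n=\Z[\sigma_1,\dots,\sigma_n,q]/(R_i)$ with $R_i$ the quadratic relations. The homomorphism above induces a surjection $A_n\twoheadrightarrow qH^*(\lg,\Z)$ (surjective because the $\sigma_i$ already generate $H^*(\lg,\Z)$), and since the target is $\Z[q]$-free of rank $2^n$ it suffices to show $\mathrm{rk}_{\Z[q]}A_n\le2^n$; this follows by running the classical straightening procedure — the relations $R_i$ rewrite $\widetilde{Q}_\mu(X^+)$ for arbitrary $\mu\in\mathcal{R}(n+1)$ as a $\Z[q]$-combination of the $\widetilde{Q}_\lambda(X^+)$ with $\lambda\in\mathcal{D}(n)$, exactly as in the proof that $H^*(\lg,\Z)=\Lambda_n/(\widetilde{Q}_{i,i})$ has rank $2^n$. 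Hence the surjection is an isomorphism.

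\emph{Step 4: quantum Giambelli.} Finally I would prove $\sigma_\lambda=\mathrm{Pfaffian}(\sigma_{\lambda_i,\lambda_j})$ (with quantum products) by induction on $l(\lambda)$. For $l(\lambda)=2$ the formula for $\sigma_{i,j}$ is obtained from the relations $R_i$ together with Step 1, and differs from classical Giambelli only by the single term $(-1)^{n+1-i}\sigma_{i+j-n-1}q$. For general $\lambda$, Laplace-expand both the quantum Pfaffian and the classical Pfaffian of the corresponding $\widetilde{Q}$-matrix along the last row; the discrepancy between the two expansions is governed termwise by the two-part correction already computed and by the relations $R_i$, and the induction hypothesis identifies the remaining minors, so the identity descends from $\Lambda_{n+1}$ to $A_n\cong qH^*(\lg,\Z)$. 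The only real bookkeeping is keeping track of the $q$-corrections, and the degree bound from Step 2 guarantees they stay first order in $q$ throughout. This completes the proof.
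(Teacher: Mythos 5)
This statement is not proved in the paper at all: it is quoted verbatim from Kresch--Tamvakis \cite{KT2}, and the paper's ``proof'' is simply the citation. So the relevant question is whether your outline would actually establish the theorem, and there it has genuine gaps. Your Step 1 is, as you admit, the entire geometric content, and ``a dimension count in the spirit of the kernel--span technique'' does not yet exist for $LG(n)$ at the level you need: in \cite{KT2} the quantum Pieri rule rests on precise ``quantum $=$ classical'' statements identifying the degree~$1$ (resp.\ degree~$2$) Gromov--Witten invariants of $LG(n)$ with classical triple intersection numbers on the larger Lagrangian Grassmannians $LG(n+1)$ (resp.\ $LG(n+2)$), and proving those comparison theorems (and the vanishing for $d\ge 2$ in the Pieri setting) is the heart of their paper, not a routine adaptation of the type~$A$ argument. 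Without that input, Steps 2--3 (which are otherwise sound: the map out of $\tilde\Lambda_{n+1}\cong\Z[E_1,\dots,E_n,2E_{n+1}]$ is automatically a ring homomorphism, the relations follow from Pieri, and the rank count over $\Z[q]$ closes the presentation) have nothing to stand on.

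Step 4 contains an actual error, not just an omission. You argue that ``the degree bound from Step 2 guarantees the corrections stay first order in $q$ throughout,'' but that bound only says $2i\le 2n<2(n+1)$ for the quadratic relations. In the Pfaffian expansion of $\sigma_\lambda$ one multiplies several two-row classes $\sigma_{\lambda_i,\lambda_j}$, and the total degree $|\lambda|$ can be as large as $n(n+1)/2$, far beyond $2(n+1)$, so $q^2$ (and higher) terms can a priori occur in the individual products and must be shown to cancel in the signed sum. Controlling exactly these higher-degree contributions is the delicate part of the quantum Giambelli proof in \cite{KT2}, and it is done there using the degree~$2$ quantum-to-classical result, not by a Laplace-expansion induction from the two-row case alone. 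As written, your induction does not close.
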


See \cite {KT2} for more details on the quantum cohomology ring of
$\lg.$

\subsection{Quantum cohomology  of Orthogonal  Grassmannian}\label{subsec:ogo}
The quantum cohomology theory of $OG(n)$ is parallel with that of
$\lg$. Let $E$ be a complex vector space of dimension $2n+1$
equipped with a nondegenerate symmetric form. Given $\lambda \in
\dn$, the Schubert variety $X_{\lambda}(\Fdot)$  is defined by the
same equation $(\ref{schubert})$ as before, relative to an isotropic
flag $ F_{\mbox{\boldmath{.}}}$ in $E.$  The Schubert class
$\tau_\lambda$ is defined as a cohomology class Poincar\'{e} dual to
$[X_{\lambda}(\Fdot)]$. Then $\tau_\lambda \in
H^{2l(\lambda)}(OG(n),\Z)$, and the cohomology classes
$\tau_\lambda$, $\lambda \in \mathcal{D}(n)$, form a $\Z$-basis for
$H^*(\og,\Z).$ The cohomology ring $H^*(\og,\Z)$ can be presented in
terms of $\Pt$- polynomials. More precisely, there is a surjective
ring homomorphism from $\Lambda_n^\prime$ to $H^{*}(\og,\Z)$ sending
$\Ptl(X)$ to $\tau_{\lambda}$ is a surjective ring homomorphism with
the kernel generated by the polynomials $\Pt_{i,i}(X)$ for all
$i=1,...,n$.
\smallskip

 For $OG(n)$, the Gromov-Witten invariants are defined similarly.
 Given an integer $d\geq 0$, and $\lambda,\mu, \nu \in \mathcal{D}(n)$, the Gromov-witten invariant
$<\tau_\lambda, \tau_\mu, \tau_\nu>_d$ is defined as  the number of
rational maps $f:\mathbb{P}^1\rightarrow \og $ of degree $d$ such
that $f(0)\in X_{\lambda}(\Fdot),$ $f(1)\in X_{\mu}(
G_{\mbox{\boldmath{.}}}),$ and $f(\infty)\in X_{\nu}(
H_{\mbox{\boldmath{.}}})$, for given isotropic flags $\Fdot$,
$G_{\mbox{\boldmath{.}}},$ and $H_{\mbox{\boldmath{.}}}$ in general
position. Note that $<\tau_\lambda, \tau_\mu, \tau_\nu>_d=0$ unless
$|\lambda|+|\mu|+|\nu|=\textrm{deg}(\og)+2nd$. The quantum
cohomology ring of $\og$ is isomorphic to $H^*(\og,\Z)\otimes \Z[q]$
as  $\Z[q]$-modules. The multiplication in $qH^*(\og,\Z)$ is given
by the relation \be~\label{multi:2} \tau_\lambda \cdotp
\tau_\mu=\sum<\tau_\lambda,\tau_\mu,\tau_{\hat{\nu}}>_d\tau_\nu
q^d,\ee where the sum is taken over $d \geq0$ and partitions $\nu$
with $|\nu|=|\lambda|+|\mu|-2nd.$ We note that the degree of the
quantum variable $q$ in $qH^*(OG(n),\Z)$ is $2n,$ whereas that of
$q$ in $qH^*(\lg,Z)$ is $(n+1).$

 \begin{theorem}[\cite{KT1}]\label{quantum coho:oge}
There is a surjective ring homomorphism from $\Lambda_n^\prime$ to
$qH^*(\og,\Z)$ sending $\Pt_\lambda(\textsc{X})$ to $\tau_\lambda$
for all $\lambda \in \mathcal{D}(n)$ and $\Pt_{n,n}(X)$ to $q$, with
the kernel generated by $\Pt_{i,i}$ for $1\leq i \leq n-1.$ The
quantum cohomology ring $qH^{*}(\og,\Z)$ is presented as a quotient
of the polynomial ring $\Z[\tau_1,...,\tau_n,q]$ modulo the
relations $\tau_{i,i}=0$ for $i=1,...,n-1$ together with the quantum
relation $\tau_n^2=q$, where \be \label{tau_ii}
\tau_{i,i}:=\tau_i^2+2\sum_{k=1}^{i-1} (-1)^k \tau_{i+k} \tau_{i-k}
+ (-1)^i \tau_{2i}.\ee The Schubert class $\tau_\lambda$ in this
presentation is given by the quantum Giambelli formulas
\begin{displaymath}
\tau_{i,j}=\tau_i\tau_j+2\sum_{k=1}^{j-1}(-1)^k\tau_{i+k}\tau_{j-k}+(-1)^j\tau_{i+j}
\end{displaymath} for $i>j>0,$ and
\begin{displaymath}
\tau_\lambda=\mathrm{Pfaffian}(\tau_{\lambda_i,\lambda_j})_{1\leq
i<j\leq r},
\end{displaymath}
where quantum multiplication is employed throughout.

\end{theorem}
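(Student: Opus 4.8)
The plan is the standard two-step strategy for quantum Schubert calculus: exhibit a ring homomorphism from the presented algebra onto $qH^*(\og,\Z)$ and then compare ranks over $\Z[q]$, the only genuinely geometric input being a handful of three-pointed genus-zero Gromov--Witten invariants of low degree. \emph{First}, set $R := \Z[\tau_1,\dots,\tau_n,q]/J$, graded by $\deg\tau_i = i$ and $\deg q = 2n$, where $J$ is generated by the elements $\tau_{i,i}$ of $(\ref{tau_ii})$ for $1\le i\le n-1$ and by $\tau_n^2 - q$ (with the convention $\tau_j := 0$ for $j>n$). Since $q$ is homogeneous of positive degree, reducing modulo $q$ gives $R/qR \cong \Z[\tau_1,\dots,\tau_n]/(\tau_{1,1},\dots,\tau_{n-1,n-1},\tau_n^2)$, which is precisely the classical presentation $\Lambda_n^\prime/(\Pt_{1,1},\dots,\Pt_{n,n}) \cong H^*(\og,\Z)$ recalled above and hence a free $\Z$-module of rank $N := \#\dn$. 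Lifting a homogeneous $\Z$-basis of $R/qR$ to $R$ and applying graded Nakayama shows that $R$ is generated over $\Z[q]$ by $N$ elements.

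\emph{Next}, I would define $\phi\colon R \to qH^*(\og,\Z)$ on generators by $\tau_i\mapsto\tau_i$ and $q\mapsto q$; as the source before quotienting by $J$ is a polynomial ring this is automatically a ring map, and it descends to $R$ once $\phi(J)=0$. By $(\ref{multi:2})$, a degree-$d\ge1$ term in a quantum product $\tau_a\star\tau_b$ with $a,b\le n$ forces $a+b\ge 2nd\ge 2n$, so $\tau_a\star\tau_b$ agrees with the cup product $\tau_a\cup\tau_b$ whenever $a+b<2n$. For $1\le i\le n-1$ every product occurring in $\tau_{i,i}$ has index sum $\le 2i\le 2n-2$, so $\phi(\tau_{i,i})$ equals the classical $\tau_{i,i}$, which vanishes. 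For $i=n$ the same bound leaves only the $d=1$ term (forced to have $\nu=\emptyset$, $\widehat\emptyset=\rho_n$): $\tau_n\star\tau_n = \tau_n\cup\tau_n + \langle\tau_n,\tau_n,\tau_{\rho_n}\rangle_1\,q$, and $\tau_n\cup\tau_n = 0$ classically (indeed $\Pt_n^2 = \Pt_{n,n}$ lies in the classical relation ideal). Hence $\phi(\tau_n^2 - q)=0$ \emph{exactly when} the Gromov--Witten number $\langle\tau_n,\tau_n,\tau_{\rho_n}\rangle_1$ equals $1$; granting this, $\phi$ is a well-defined graded $\Z[q]$-algebra map. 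It is surjective since $\tau_1,\dots,\tau_n$ generate $qH^*(\og,\Z)$ over $\Z[q]$ (induction on degree, using classical Giambelli modulo $q$). A surjection from a $\Z[q]$-module generated by $N$ elements onto the free $\Z[q]$-module $qH^*(\og,\Z)\cong\Z[q]^{N}$ must be an isomorphism, which proves the presentation and shows along the way that $R$ is $\Z[q]$-free of rank $N$.

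\emph{It remains} to prove the quantum Giambelli formulas, i.e.\ that $\phi$ sends the Pfaffian expression $\tau_\lambda^{\mathrm{Gia}}:=\mathrm{Pf}(\tau_{\lambda_i,\lambda_j})$ to the Schubert class $\tau_\lambda$ for each $\lambda\in\dn$. The $2\times 2$ case is immediate: for $i>j$, every product in $\tau_i\star\tau_j + 2\sum_k(-1)^k\tau_{i+k}\star\tau_{j-k} + (-1)^j\tau_{i+j}$ has index sum $i+j\le 2n-1<2n$ and is therefore classical, so this equals the classical class $\tau_{(i,j)}$. Similarly, when $|\lambda|<2n$ every product in the Pfaffian expansion of $\tau_\lambda^{\mathrm{Gia}}$ has total codimension $|\lambda|<2n$, so $\phi(\tau_\lambda^{\mathrm{Gia}})$ is the classical Giambelli Pfaffian, which is $\tau_\lambda$. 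For $|\lambda|\ge 2n$ one must show that the $q$-terms produced by the factors $\tau_a\star\tau_b$ with $a+b\ge 2n$ reorganize into $\tau_\lambda$ with no surviving correction; this is the quantum deformation of the classical Pfaffian-straightening proof of Giambelli, driven by the quantum Pieri rule for $\og$ — the product $\tau_p\star\tau_\lambda$ equals its classical Pieri expansion plus a few $q$-corrections supported on partitions near the top of $\dn$, whose coefficients are again low-degree Gromov--Witten numbers. Granting the quantum Pieri rule, associativity and the fact that $\tau_1,\dots,\tau_n$ generate force the entire product structure, and a mechanical (if lengthy) Pfaffian computation in $R$, using only the relations $\tau_{i,i}=0$ and $\tau_n^2=q$ to straighten, then gives $\phi(\tau_\lambda^{\mathrm{Gia}})=\tau_\lambda$.

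\emph{The hard part} will be the enumerative geometry, not the algebra: everything in the paragraphs above mirrors the classical case, with the one new relation $\tau_n^2=q$ tailored so that the straightening closes up. What genuinely needs work is the identity $\langle\tau_n,\tau_n,\tau_{\rho_n}\rangle_1=1$ and the finitely many degree-one Gromov--Witten numbers appearing as quantum Pieri corrections. These require a concrete description of the family of lines on $\og$ (equivalently, on the isomorphic even orthogonal Grassmannian $\SOe/P_{n+1}$): one realizes lines through general Schubert subvarieties as points of an explicit incidence variety of isotropic flags and checks that the relevant intersections are transverse and reduced, or — following Buch--Kresch--Tamvakis — reduces each degree-one invariant to a classical triple intersection number on an auxiliary isotropic flag variety. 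I expect the transversality and dimension bookkeeping for these spaces of lines to be the most delicate point.
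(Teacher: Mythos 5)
The paper does not prove this statement at all: it is quoted from Kresch--Tamvakis \cite{KT1} (``See \cite{KT1} for more details''), so there is no internal argument to compare yours against. Your algebraic scaffolding is sound and does mirror the broad strategy of the cited source: the degree count (since $\deg q=2n$) showing that $\tau_{i,i}$ for $i\le n-1$, and every two-row Giambelli expression with $i+j<2n$, involve only classical products; the identification $R/qR\cong H^*(\og,\Z)$ via the classical presentation; the graded Nakayama plus rank-$2^n$ comparison over $\Z[q]$; and the observation that the remaining relation $\tau_n^2=q$ is equivalent to $\langle\tau_n,\tau_n,\tau_{\rho_n}\rangle_1=1$ --- all of this is correct.

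However, as a proof the proposal has a genuine gap, and you have located it yourself: the argument is conditional on (a) the value $\langle\tau_n,\tau_n,\tau_{\rho_n}\rangle_1=1$, (b) a quantum Pieri rule for $\og$, and (c) the claim that the Pfaffian straightening for $|\lambda|\ge 2n$ closes up with no surviving $q$-corrections. These are not routine verifications to be ``granted''; they are precisely the content of the theorem being cited. In \cite{KT1} they are established by identifying the relevant Gromov--Witten invariants of $\og$ with classical triple intersection numbers on auxiliary isotropic Grassmannians (the quantum-to-classical principle), which is exactly the step your final paragraph defers, together with the attendant transversality and dimension checks for the space of lines. So what you have is a correct reduction of the presentation and of quantum Giambelli to a finite list of enumerative inputs, not a complete proof; filling in those inputs would essentially amount to reproducing the argument of \cite{KT1}, which is why the paper simply cites it.
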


See \cite{KT1} for more details on the quantum cohomology of $\og$.
\subsection{Quantum Euler class}\label{subsection-quan-euler}
The quantum Euler class $e_q(M)$ of a projective manifold $M$ is a
deformation of (ordinary) Euler class  $e(M)$. Originally, it is
defined in the context of the so-called Frobenius algebra(\cite{A}).
Restricting ourselves to the cases $OG(n)$ and $LG(n)$ for
simplicity, the quantum Euler class $e_q(M)$ can be defined as
follows.
\begin{definition}\label{def-quantum euler} For $M=OG(n)$ or
$LG(n)$, the quantum Euler classes $e_q(M)$ are respectively defined
as
 \be
 e_q(OG(n)):=\sum_{\lambda\in
\mathcal{D}(n)}\tau_\lambda\cdot\tau_{\hat{\lambda}},\ee

\be \hspace{0.1in}\hspace{0.1in} e_q(LG(n)):=\sum_{\lambda\in
\mathcal{D}(n)}\sigma_\lambda\cdot\sigma_{\hat{\lambda}}.\ee
\end{definition}

Note that if we replace the quantum product in the above definitions
by the ordinary product in $H^*(M)$, we get  the  Euler class $e(M)$
of $M.$ The object $e_q(M)$ encodes  information on the
semisimplicity of the quantum cohomology ring as follows.

\begin{proposition}\label{euler cri}(\cite{A}, Theorem 3.4) For a projective manifold $M$,
the quantum cohomology ring $qH^*(M)$ with the quantum parameters
specialized to nonzero complex numbers is semisimple if and only if
the quantum Euler class (after the specialization) is invertible in
that ring.
\end{proposition}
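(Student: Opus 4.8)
The plan is to recognize Proposition~\ref{euler cri} as the standard semisimplicity criterion for Frobenius algebras and to run that argument directly on $A:=qH^*(M)$ with the quantum parameter specialized to a nonzero complex number. Recall that $A$ is then a finite-dimensional commutative $\C$-algebra carrying a symmetric nondegenerate bilinear form $(\alpha,\beta)=\theta(\alpha\cdot\beta)$, where $\theta\colon A\to\C$ is the linear functional reading off the coefficient of the point class (i.e.\ integration over $M$), and that for this form the Schubert basis $\{\sigma_\lambda\}_{\lambda\in\mathcal{D}(n)}$ of $qH^*(LG(n))$ (resp.\ $\{\tau_\lambda\}$ of $qH^*(OG(n))$) is dual to the complementary basis $\{\sigma_{\widehat{\lambda}}\}$ (resp.\ $\{\tau_{\widehat{\lambda}}\}$) by classical Poincar\'e duality. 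Thus $e_q(M)=\sum_\lambda\sigma_\lambda\cdot\sigma_{\widehat{\lambda}}$ is precisely the handle element $\sum_i x_i\,x^i$ of the Frobenius algebra $A$ formed from any pair of dual bases $\{x_i\},\{x^i\}$; in particular it is independent of the basis. I would first record the identity
\[
\theta\bigl(\alpha\cdot e_q(M)\bigr)=\mathrm{Tr}(L_\alpha)\qquad\text{for every }\alpha\in A ,
\]
where $L_\alpha\colon A\to A$ denotes multiplication by $\alpha$: writing $\alpha x_i=\sum_j c_{ij}x_j$ one has $c_{ij}=(\alpha x_i,x^j)$, hence $\theta(\alpha\cdot e_q(M))=\sum_i(\alpha x_i,x^i)=\sum_i c_{ii}=\mathrm{Tr}(L_\alpha)$.

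Next I would treat the two implications. Suppose $A$ is semisimple. Over $\C$ it then decomposes as $A\cong\prod_{j=1}^N\C\,\epsilon_j$ with primitive orthogonal idempotents $\epsilon_j$ summing to $1$, and $L_{\epsilon_j}$ is the projection onto the one-dimensional factor $\C\epsilon_j$, so $\mathrm{Tr}(L_{\epsilon_j})=1$. Nondegeneracy of $(\,\cdot\,,\,\cdot\,)$ yields, for each $j$, some $\beta$ with $\theta(\epsilon_j\beta)\neq0$; since $\epsilon_j\beta\in\C\epsilon_j$ this forces $\theta(\epsilon_j)\neq0$. Writing $e_q(M)=\sum_j c_j\epsilon_j$ and pairing with $\epsilon_j$ gives $c_j\,\theta(\epsilon_j)=\theta(\epsilon_j\cdot e_q(M))=\mathrm{Tr}(L_{\epsilon_j})=1$, so every $c_j$ is nonzero and $e_q(M)$ is a unit, with inverse $\sum_j c_j^{-1}\epsilon_j$. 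Conversely, assume $A$ is not semisimple, so its nilradical $\mathfrak{n}$ — which equals the Jacobson radical since $A$ is Artinian — is nonzero; fix $0\neq z\in\mathfrak{n}$. Because $\mathfrak n$ is an ideal, $\alpha z$ is nilpotent for every $\alpha\in A$, hence $L_{\alpha z}=L_\alpha L_z$ is a nilpotent operator and $\theta\bigl((\alpha z)\cdot e_q(M)\bigr)=\mathrm{Tr}(L_{\alpha z})=0$. Therefore $\bigl(z\cdot e_q(M),\alpha\bigr)=0$ for all $\alpha$, and nondegeneracy forces $z\cdot e_q(M)=0$; as $z\neq0$, $e_q(M)$ is a zero-divisor and cannot be invertible. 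Together these give the equivalence.

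The computations above are routine; the one ingredient that genuinely has to be in place is the Frobenius input, namely that after specializing $q$ to a nonzero value the pairing $(\alpha,\beta)\mapsto\theta(\alpha\beta)$ on $A=qH^*(M)$ is nondegenerate and has $\{\sigma_\lambda\},\{\sigma_{\widehat\lambda}\}$ (resp.\ the $\tau$'s) as dual bases. For $LG(n)$ and $OG(n)$ this can be extracted from the quantum Giambelli presentations in Theorems~\ref{quantum coho:lg} and~\ref{quantum coho:oge} together with classical Poincar\'e duality $\sigma_\lambda\leftrightarrow\sigma_{\widehat\lambda}$, using that the quantum product is Frobenius for the $q$-independent Poincar\'e metric; alternatively one may simply invoke the general fact that the small quantum cohomology of any projective manifold is a Frobenius algebra with precisely these dual bases, which is exactly what makes Definition~\ref{def-quantum euler} the right notion of quantum Euler class. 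Granting that, the rest is standard finite-dimensional commutative algebra over $\C$, and this identification of the Frobenius structure is the only place where any care is needed — the argument itself presents no serious obstacle.
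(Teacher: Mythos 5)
Your argument is correct, but note that the paper itself offers no proof of this proposition: it is quoted verbatim from Abrams (\cite{A}, Theorem 3.4) and used as a black box. What you have written is, in effect, a reconstruction of Abrams' own proof, i.e.\ the standard Frobenius-algebra criterion: the quantum Euler class is the handle (characteristic) element $\sum_i x_i x^i$, the trace identity $\theta(\alpha\cdot e_q(M))=\mathrm{Tr}(L_\alpha)$ gives invertibility in the semisimple case via the idempotent decomposition $A\cong\prod_j\C\,\epsilon_j$ with $c_j\theta(\epsilon_j)=1$, and a nonzero nilpotent $z$ kills $e_q(M)$ in the non-semisimple case because all traces $\mathrm{Tr}(L_{\alpha z})$ vanish and the pairing is nondegenerate. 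All of these steps check out. The one ingredient you rightly flag as needing care is the Frobenius input itself: after specializing $q$ to a nonzero complex number, the relevant pairing $(\alpha,\beta)=\theta(\alpha\cdot\beta)$ coincides with the \emph{classical} Poincar\'e pairing, because in $\sigma_\lambda\cdot\sigma_\mu=\sum_{d,\nu}\langle\sigma_\lambda,\sigma_\mu,\sigma_{\hat\nu}\rangle_d\,\sigma_\nu q^d$ the coefficient of the point class involves only invariants with a fundamental-class insertion, which vanish for $d>0$; hence nondegeneracy and the duality $\sigma_\lambda\leftrightarrow\sigma_{\hat\lambda}$ (resp.\ $\tau_\lambda\leftrightarrow\tau_{\hat\lambda}$) are exactly classical Poincar\'e duality, and your handle element agrees with Definition of $e_q(M)$ in the paper. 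With that sentence made explicit, your proof is complete and self-contained, whereas the paper simply defers to \cite{A}; the benefit of your route is that the reader sees why invertibility of $e_q(M)$ detects semisimplicity, which is precisely how the proposition is later used together with Proposition concerning minuscule/cominuscule semisimplicity to conclude $S_{\rho_n}(\epsilon\zeta^I)\neq 0$.
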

There is a class of  manifolds whose quantum cohomology rings are
semisimple.

\begin{proposition}\label{semisimplicity}(\cite{CMP}) If $M=G/P$ is a minuscule or
cominuscule homogeneous variety, then the quantum cohomology ring
$qH^*(M)$, which contains a single quantum variable $q$, is
semisimple after specializing at $q=1$.
\end{proposition}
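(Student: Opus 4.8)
The plan is to prove that the finite-dimensional commutative $\C$-algebra $A:=qH^*(M)|_{q=1}$ is reduced. Since $H^{\mathrm{odd}}(M)=0$ the quantum product is commutative, and because $M$ is Fano with a single quantum variable $qH^*(M)$ is a free $\C[q]$-module of rank $N:=\dim_\C H^*(M)$ (exactly as recalled for $LG(n)$ and $OG(n)$ in the previous sections), so $\dim_\C A=N$. For such an algebra over the algebraically closed field $\C$, being reduced is equivalent to being semisimple and to $\mathrm{Spec}\,A$ consisting of exactly $N$ distinct reduced closed points; equivalently, by Abrams' criterion (Proposition \ref{euler cri}), it suffices to show that the quantum Euler class $e_q(M)$ of Definition \ref{def-quantum euler} becomes a unit in $A$. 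I would aim for the first formulation and regard the Euler-class statement as a by-product.

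Next I would fix an explicit presentation $qH^*(M)=\C[x_1,\dots,x_s,q]/I_q$, where the $x_i$ are the Schubert generators and $I_q$ is generated by the $q$-deformations of the classical relations: Theorem \ref{quantum coho:lg} for $LG(n)$, Theorem \ref{quantum coho:oge} for $OG(n)$, and the Siebert--Tian/Fulton--Woodward-type presentation in general. The two structural inputs I would lean on, both special to the (co)minuscule case, are (a) the quantum-to-classical principle, available here, which rewrites every three-point genus-zero Gromov--Witten invariant of $M$ as an honest intersection number on an auxiliary homogeneous variety, so that all structure constants of $A$ are under control; and (b) the multiplicity-free quantum Chevalley/Pieri rule, which turns the system $I_q=0$ at $q=1$ into one that can be solved explicitly. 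For $M=LG(n)$ or $OG(n)$ the solution set will be parametrized below by suitable families of roots of unity; in the general (co)minuscule case it is cut out by a Toda-type system whose solutions lie in the regular locus of the relevant torus, and again the $\Z/r$-action coming from the Seidel representation organizes them into free orbits.

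The crux is then a counting/separation step: show that the $N$ solutions produced this way are pairwise distinct, i.e.\ that $\mathrm{Spec}\,A$ is reduced. Granting distinctness one gets $A\cong\prod_{p}\C$, hence semisimplicity; moreover one may check that the $\Z/r$-symmetry acts freely on $\mathrm{Spec}\,A$, which is exactly the structure that part (2) of Conjecture $\mo$ will require. The main obstacle is precisely this distinctness (equivalently, reducedness). It is tempting to hope that quantum multiplication by the hyperplane class is already regular semisimple at $q=1$, for then $A=\C[h]/(f(h))$ with $f$ squarefree and we would be done immediately; but this already fails for $M=Gr(2,4)$, where that operator has a repeated eigenvalue $0$ even though $A$ is semisimple. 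Hence no single natural operator has controllable eigenvalue multiplicities in a uniform way, and one must separate the points of $\mathrm{Spec}\,A$ using a whole collection of Schubert coordinates — it is here, through the quantum-to-classical principle and the combinatorics of the quantum Pieri rule, that the (co)minuscule hypothesis enters essentially. The alternative route through Proposition \ref{euler cri}, namely computing $e_q(M)|_{q=1}$ directly from the presentation and verifying it is a unit, is transparent for $\mathbb{P}^n$ (it equals $(n+1)h^{-1}$) but in the general case repackages the same difficulty and needs the same structural input.
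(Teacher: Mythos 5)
There is a genuine gap, and it is the one you yourself flag: the ``counting/separation step'' --- showing that the $N$ points of $\mathrm{Spec}\,A$ are pairwise distinct and that $A$ is reduced (equivalently, via Proposition \ref{euler cri}, that $e_q(M)|_{q=1}$ is a unit) --- is never carried out. Everything before it is setup, and everything after it is conditional on it, so what you have is a plan that isolates the hard step rather than a proof. For the general (co)minuscule case the appeal to ``a Toda-type system whose solutions lie in the regular locus'' and to the Seidel $\Z/r$-action organizing free orbits is asserted without argument, and these are precisely the points where the actual content lies. Note also that this statement is not proved in the paper at all: it is imported verbatim from \cite{CMP}, and the paper then uses it (in Lemma \ref{nonidentity}) to deduce the nonvanishing $S_{\rho_n}(\epsilon\zeta^I)\neq 0$. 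So you cannot close your gap for $LG(n)$ and $OG(n)$ by invoking the eigenbasis or Euler-class computations of Sections 5--6 of this paper, since those rest on the semisimplicity you are trying to prove; that would be circular.

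If you want to salvage the argument for the two cases at hand, the missing step can in fact be done directly and without \cite{CMP}: by Proposition \ref{pp} and Corollary \ref{cor:expression} the points of $\mathcal{V}_n^\prime$ (resp.\ $\mathcal{W}_n^\prime$) are the $2^n$ points $v(\epsilon\zeta^I)$, $I\in\In$ (resp.\ $w(\delta\zeta^I)$, $I\in\mathcal{I}_{n+1}^e$), which are visibly distinct, and the value of the quantum Euler class at such a point is $S_{\rho_n}(\epsilon\zeta^I)$ (resp.\ $2^n S_{\rho_{n+1}}(\delta\zeta^I)$) by Proposition \ref{p}; the nonvanishing then follows from the factorization
\begin{equation*}
S_{\rho_n}(x_1,\dots,x_n)=x_1\cdots x_n\prod_{1\leq i<j\leq n}(x_i+x_j),
\end{equation*}
since exclusivity of $I$ means $\zeta^{i_k}\neq-\zeta^{i_l}$ for all $k,l$, so every factor is nonzero. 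With that independent nonvanishing in hand, counting dimensions gives reducedness and hence semisimplicity for $OG(n)$ and $LG(n)$. But as written your proposal neither supplies this argument nor any substitute for it, and it does not cover the general minuscule/cominuscule case claimed in the proposition, which is exactly what the citation to \cite{CMP} provides.
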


 We refer to \S $2$ of \cite{CP1} for the notion of minuscule or cominuscule homogeneous varieties of Proposition \ref{semisimplicity}.
Note that $OG(n)$ is minuscule and $LG(n)$ is cominuscule (see \S
$2$ of \cite{CP1}). Thus, the rings $qH^*(OG(n))_{q=1}$ and
$qH^*(LG(n))_{q=1}$ are semisimple.

\section{Peterson's result}
\subsection{Peterson's result}
Informally, one of  Peterson's (unpublished) results on the quantum
cohomology
 can be stated as follows (\cite{Pete1}): Let $G$ be a semisimple algebraic group and $B $  a Borel subgroup of $G$.
 Let $G^\vee$ be the Langlands dual of $G,$ and $B^\vee$  a Borel subgroup of $G^\vee.$ For a parabolic subgroup $P$ of $G$ containing $B$, the quantum cohomology ring of the homogeneous variety
$G/P$ is isomorphic with the coordinate ring
$\mathcal{O}(\mathcal{Y}_P)$ of a (an affine) subvariety
$\mathcal{Y}_P$, which is a stratum of so-called Peterson's variety
$\mathcal{Y}\subset G^\vee/B^\vee$, i.e.,
$$\mathcal{Y}=\bigcup_{Q}\mathcal{Y}_Q,$$ where $Q$ ranges over parabolic subgroups containing $B.$
For convenience, we will simply call the subvariety $\mathcal{Y}_P$
a Peterson variety (corresponding to $P)$, too, if there is no
confusion. When $P$ is a minuscule parabolic subgroup of $G,$ this
Peterson's result goes further (\cite{Pete2}). More precisely, in
this case, the variety $\mathcal{Y}_P$ can be replaced by a simpler
isomorphic variety $\mathcal{V}_P\subset U^\vee$, where $U^\vee$ is
the unipotent radical of $B^\vee.$ This Peterson's (unpublished)
result was verified for homogeneous varieties $G/P$ of Lie type $A$
(\cite{Riet1}, \cite{Riet2}), and for even and odd  orthogonal
Grassmannians (\cite{CH1}). On the other hand,
$LG(n)=Sp_{2n}(\C)/P_n$ is not minuscule but cominuscule, but still
we  can find a variety $\mathcal{V}_{P_n}\subset SO_{2n+1}(\C)$,
defined in the same way as in the minuscule case, of which the
coordinate ring $\mathcal{O}(\mathcal{V}_{P_n})$ turns out to be
isomorphic with the quantum cohomology ring of $\lg$(\cite{CH1}).

\subsection{Varieties $\mathcal{V}_n$ and $\mathcal{W}_n$}
Note that the Peterson variety $\mathcal{V}_{P_n}$ was defined Lie
theoretically, and so we cannot see how the coordinate ring of
$\mathcal{V}_{P_n}$ looks like directly from the definition of
$\mathcal{V}_{P_n}$. To avoid some complexity, here we will not give
the definition of  $\mathcal{V}_{P_n}$.  Instead, we will give a
`unraveled' version of  $\mathcal{V}_{P_n}$ for our cases which
serves our purpose better.

 For $V_1,...,V_n\in \C$, let  $\tilde{v}(V_1,...,V_n)$ be the
matrix in $SL_{2n}(\C)$ \be \label{eltV} \tilde{v}(V_1,...,V_n):=
\left(\begin{array}{cccccccc}
1&V_1&V_2&\cdots &V_n&0&\cdots&0\\
&1&V_1&V_2& \cdots&V_n&\dd&\vd\\
& &1&V_1& & &\dd&0\\
&&&\dd&&&&V_n\\
&&&&\dd&&&\vd\\
&&&&&1&V_1&V_2\\
&&&&&&1&V_1\\
&&&&&&&1
\end{array}\right).
\ee For $i=1,...,n-1,$  put \be \label{Vii}
V_{i,i}:=V_i^2+2\sum_{k=1}^i(-1)^k V_{i+k}V_{i-k},\ee
 where
$V_0=1$, and $V_r=0$ if $r\geq n+1.$
\begin{lemma}(\cite{CH1})\label{Lemma:SL:Sp}
If $\tilde{v}(V_1,...,V_n)$ is an element of $ SL_{2n}(\C)$ such
that  $V_{i,i}=0$ for $i=1,...,n-1$,
 then the matrix $\tilde{v}(V_1,...,V_n)$ in fact belongs to $Sp_{2n}(\C)$.
 \end{lemma}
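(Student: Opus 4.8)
The plan is to verify directly that the matrix $g := \tilde{v}(V_1,\dots,V_n)$ preserves a fixed nondegenerate skew-symmetric form on $\C^{2n}$ whenever the constraints $V_{i,i}=0$ ($i=1,\dots,n-1$) hold. First I would fix the antidiagonal symplectic form $J$ on $\C^{2n}$ given by $J_{k,l} = (-1)^k \delta_{k+l,2n+1}$ (or some sign convention making $J^t = -J$ and $J^2 = -I$), chosen so that the standard upper-triangular unipotent subgroup of $Sp_{2n}(\C)$ in this realization consists precisely of matrices of the banded Toeplitz shape appearing in $(\ref{eltV})$. The content of the lemma is then the single matrix identity $g^t J g = J$. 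Since $g$ is already known to lie in $SL_{2n}(\C)$, and membership in $Sp_{2n}(\C)$ is a closed condition, it suffices to check this identity on the locus cut out by the quadratic equations $V_{i,i}=0$.

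The key simplification is that $g$ is a banded (upper-triangular) Toeplitz matrix: $g = \sum_{k\ge 0} V_k N^k$ where $N$ is the nilpotent shift matrix, $V_0 = 1$, and $V_k = 0$ for $k > n$. Writing the putative symplectic form as $J = D J_0$ with $J_0$ the exchange-antidiagonal and $D$ a diagonal sign matrix, the condition $g^t J g = J$ becomes a statement about the generating function $\Phi(t) := \sum_{k\ge 0} V_k t^k$. Concretely, conjugating the Toeplitz structure through $J_0$ turns $g^t J g$ into another banded Toeplitz-type matrix whose entries, read off along diagonals, are exactly the convolution coefficients $\sum_{i+j = m}(-1)^{?}V_i V_j$; comparing with the entries of $J$ forces these coefficients to vanish for the relevant range of $m$. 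After matching signs, those vanishing conditions are precisely $V_{m,m} = V_m^2 + 2\sum_{k=1}^m (-1)^k V_{m+k}V_{m-k} = 0$ — i.e. the coefficient extraction from $\Phi(t)\Phi(-t)$ — which is the hypothesis for $m=1,\dots,n-1$, while for $m \ge n$ it holds automatically because $V_r = 0$ for $r \ge n+1$ and for $m=0$ it is $V_0^2 = 1 \ne 0$, accounting for the nonzero antidiagonal of $J$ itself. One should double-check the boundary index $m=n$: there the relation $V_{n,n}=0$ is \emph{not} assumed, but the corresponding entry of $g^t J g$ lies outside the support of $J$ for $2n$-by-$2n$ matrices, so no constraint is needed there — this is exactly the place where the cominuscule case $LG(n)$ differs from the orthogonal cases and why only $n-1$ equations appear.

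The main obstacle I anticipate is purely bookkeeping: pinning down the correct sign convention for $J$ (the diagonal matrix $D$) so that the identity $g^t J g = J$ is equivalent to the stated $V_{i,i}=0$ with the signs exactly as in $(\ref{Vii})$, and verifying the index ranges at both ends of the band. There is genuine risk of an off-by-one or a global sign error, so I would set up the computation on a small case ($n=2$ or $n=3$) to fix conventions, then write the general argument via the generating-function identity $\bigl(\Phi(t)\Phi(-t)\bigr)_{[t^m]} = (-1)^m V_{m,m}$ (with the convention $V_{0,0} = 1$). Once conventions are fixed, the proof is a one-line consequence of that identity together with the observation that $\Phi(t)$ is a polynomial of degree $\le n$ in $t$, so its "anti-palindromic" square $\Phi(t)\Phi(-t)$ has constrained support. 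I would also remark that this is essentially the statement from $\cite{CH1}$ being recalled, so the write-up can be brief and can cite that source for the conventions if a full reproduction is not desired.
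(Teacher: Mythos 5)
Your approach is correct in substance, and it is worth noting that the paper itself gives no argument for this lemma --- it is recalled from \cite{CH1} with a citation only --- so a direct verification like yours is exactly what is needed, and it is essentially the standard computation. The clean way to organize your bookkeeping is this: with the alternating antidiagonal form $J_{k,l}=(-1)^k\delta_{k+l,2n+1}$ (so $J^t=-J$ and the upper-triangular Borel of this realization of $Sp_{2n}(\C)$ is the standard one), write $\tilde v=\Phi(N)$ with $\Phi(t)=\sum_{k\ge0}V_kt^k$, $V_0=1$, and $N$ the nilpotent shift. Since conjugation by the exchange matrix sends $N^k$ to $(N^t)^k$ and conjugation by the alternating sign matrix sends $N^t$ to $-N^t$, one gets $J^{-1}\tilde v^{\,t}J=\Phi(-N)$, so $\tilde v^{\,t}J\tilde v=J$ is equivalent to $\Phi(N)\Phi(-N)=I$, i.e.\ to $\Phi(t)\Phi(-t)\equiv1$ modulo $t^{2n}$. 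Expanding, the odd-degree coefficients of $\Phi(t)\Phi(-t)$ vanish identically, and the coefficient of $t^{2m}$ is $(-1)^mV_{m,m}$; so your stated extraction rule has an index slip (it should read coefficient of $t^{2m}$, not $t^m$), though your surrounding description shows you have the right mechanism. The constraints for $m=1,\dots,n-1$ are the hypotheses, the $m=0$ coefficient is $V_0^2=1$, and the $m=n$ coefficient is annihilated because $N^{2n}=0$ in size $2n$ --- which confirms your observation that $V_{n,n}=0$ is not needed and is precisely where this case differs from the odd orthogonal one. One side remark in your write-up is inaccurate but harmless: the upper-triangular unipotent subgroup of $Sp_{2n}(\C)$ does not consist only of banded Toeplitz matrices of the shape (\ref{eltV}) (it has dimension $n^2$); all you need is that the chosen form makes the standard Borel upper triangular, and the rest of your argument goes through as outlined.
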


Lemma \ref{Lemma:SL:Sp} makes the following definition well-defined.
\begin{definition}
For $OG(n)=SO_{2n+1}(\C)/P_n,$ we define
$\mathcal{V}_n=\mathcal{V}_{P_n}$ to  be the subvariety of
$Sp_{2n}(\C)$ consisting of matrices  of the form
$\tilde{v}(V_1,...,V_n)$ satisfying the relations $V_{i,i}=0$ for
$i=1,...,n-1$.
\end{definition}
Note that if  we  view $V_i$  as coordinate functions of
$\mathcal{V}_n$, then the coordinate ring
$\mathcal{O}(\mathcal{V}_n)$ of $\mathcal{V}_n$ is
$\C[V_1,...,V_n]/\mathcal{I},$ where $\mathcal{I}$ is generated by
$V_{i,i}$ for all $i=1,...,n-1.$

\bigskip

For $W_1,...,W_{n+1}\in \C$, let  $\tilde{w}(W_1,...,W_{n+1})$ be
the matrix in $SL_{2n+1}(\C)$ \be\label{eltW}
\tilde{w}(W_1,...,W_{n+1}):= \left(\begin{array}{cccccccc}
1&W_1&W_2&\cdots &W_{n+1}&0&\cdots&0\\
&1&W_1&W_2& \cdots&W_{n+1}&\dd&\vd\\
& &1&W_1& & &\dd&0\\
&&&\dd&&&&W_{n+1}\\
&&&&\dd&&&\vd\\
&&&&&1&W_1&W_2\\
&&&&&&1&W_1\\
&&&&&&&1
\end{array}\right).
\ee

 For $i=1,...,n,$ let  \be W_{i,i}:=W_i^2+2\sum_{k=1}^i(-1)^k
W_{i+k}W_{i-k},\ee
 where
$W_0=1$,and $W_r=0$ if $r\geq n+2$.

\begin{lemma}(\cite{CH1})\label{lemma;SL:So}
If $\tilde{w}(W_1,...,W_{n+1})$ is an element of $ SL_{2n+1}(\C)$
such that  $W_{i,i}=0$ for $i=1,...,n$, then the matrix
$\tilde{w}(W_1,...,W_{n+1})$ in fact belongs to $SO_{2n+1}(\C).$
\end{lemma}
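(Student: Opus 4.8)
The plan is to mimic the proof strategy that presumably established Lemma \ref{Lemma:SL:Sp}, adapting it to the orthogonal group of odd size. Recall that a matrix $g \in SL_{2n+1}(\C)$ lies in $SO_{2n+1}(\C)$ (with respect to some fixed nondegenerate symmetric form $J$) precisely when $g^T J g = J$. So the first step is to pin down the symmetric form $J$ with respect to which $\tilde{w}(W_1,\ldots,W_{n+1})$ is to be orthogonal; the natural choice, consistent with the setup in \cite{CH1}, is the anti-diagonal form $J = (\delta_{i+j,\,2n+2})_{1\le i,j\le 2n+1}$, for which $\tilde w$ is upper triangular unipotent and the $J$-adjoint of an upper-triangular Toeplitz matrix is again upper-triangular Toeplitz. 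I would verify that conjugation by $J$ sends $\tilde w(W_1,\ldots,W_{n+1})$ to another matrix of the same upper-triangular banded Toeplitz shape, whose entries are the reversed/signed sequence; the orthogonality condition $\tilde w^T J \tilde w = J$ then becomes an equality of Toeplitz matrices, i.e.\ a system of polynomial identities in the $W_k$, one for each super-diagonal.

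The key computation is then to read off exactly these polynomial identities. Since $\tilde w = I + N$ where $N$ is the banded nilpotent Toeplitz matrix with $W_k$ on the $k$-th superdiagonal, and since products of such Toeplitz matrices correspond to multiplication of the generating polynomials $W(t) = 1 + W_1 t + \cdots + W_{n+1}t^{n+1}$ modulo truncation, the condition $\tilde w^T J \tilde w = J$ translates (after tracking how $J$ reverses indices) into the single functional equation $W(t)\,W(-t) \equiv 1$ in the appropriate truncated sense — equivalently, the vanishing of the coefficient of $t^{2i}$ in $W(t)W(-t)$ for each relevant $i$. Expanding that coefficient gives exactly $W_i^2 + 2\sum_{k=1}^{i}(-1)^k W_{i+k}W_{i-k} = W_{i,i}$ (the odd-power coefficients vanish automatically by antisymmetry), so the orthogonality of $\tilde w$ is equivalent to $W_{i,i}=0$ for the full range of $i$ that the truncation produces. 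One then checks that this range is exactly $i = 1,\ldots,n$: the top degree $2(n+1)$ term and anything beyond is killed by the size $2n+1$ truncation, so the only surviving constraints are $W_{1,1} = \cdots = W_{n,n} = 0$, which is precisely the hypothesis. Hence the hypothesis forces $\tilde w^T J \tilde w = J$, and since $\det \tilde w = 1$ is automatic, $\tilde w \in SO_{2n+1}(\C)$.

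The main obstacle is bookkeeping rather than conceptual: one must be careful about how the anti-diagonal form $J$ acts by index reversal on a $(2n+1)\times(2n+1)$ banded Toeplitz matrix and confirm that the sign pattern it introduces is exactly the $t \mapsto -t$ substitution, and one must count precisely which Toeplitz diagonals survive the truncation to size $2n+1$ so that the number of resulting equations matches $n$ and not $n-1$ or $n+1$ — note the contrast with Lemma \ref{Lemma:SL:Sp}, where the size is $2n$ and only $W_{i,i}$ for $i \le n-1$ appear. I would therefore do the small cases $n=1,2$ explicitly first to fix the conventions, then give the general argument via the generating-function identity $W(t)W(-t)\equiv 1 \pmod{t^{2n+2}}$. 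Alternatively, if the proof in \cite{CH1} of Lemma \ref{lemma;SL:So} proceeds by a dimension/closure argument (realizing $\tilde w$ as a limit of conjugates of regular unipotent elements constrained to lie in a fixed $SO_{2n+1}$), one could instead cite that structure and only check that the defining equations of that subvariety specialize to $W_{i,i}=0$; but the direct Toeplitz computation above is self-contained and is the route I would write up.
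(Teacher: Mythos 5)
Your overall strategy is sound, and since the paper itself gives no proof of this lemma (it is quoted from \cite{CH1}), the direct Toeplitz/generating-function verification you outline is a perfectly legitimate self-contained route: identifying $\tilde w$ with the truncated polynomial $W(t)=1+W_1t+\cdots+W_{n+1}t^{n+1}$ in $\C[t]/(t^{2n+1})$, the orthogonality condition becomes $W(t)W(-t)\equiv 1 \pmod{t^{2n+1}}$, the coefficient of $t^{2i}$ is $(-1)^iW_{i,i}$, odd coefficients vanish by evenness, and the truncation at size $2n+1$ leaves exactly the constraints $i=1,\dots,n$ (while for the symplectic case of Lemma \ref{Lemma:SL:Sp}, size $2n$ leaves $i\le n-1$), with $\det\tilde w=1$ automatic. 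That bookkeeping is all correct.

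There is, however, one concrete error in the step you pinned down rather than left to verification: the bilinear form. With the plain anti-diagonal form $J=(\delta_{i+j,\,2n+2})$ that you name as the natural choice, no signs appear at all: for any upper-triangular Toeplitz matrix $A$ with $A_{ij}=c_{j-i}$ one computes $(JA^{T}J)_{ij}=A_{2n+2-j,\,2n+2-i}=c_{j-i}$, i.e.\ $JA^{T}J=A$. The condition $\tilde w^{T}J\tilde w=J$ then reads $\tilde w=\tilde w^{-1}$, i.e.\ $W(t)^{2}\equiv 1$, which for a unipotent matrix forces $\tilde w=I$; so relative to that form the lemma is simply false, and your functional equation $W(t)W(-t)\equiv 1$ does not arise. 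The sign pattern $t\mapsto -t$ comes only if the symmetric form carries alternating signs along the anti-diagonal, e.g.\ $J_{ij}=(-1)^{i-1}\delta_{i+j,\,2n+2}$ (still symmetric because the size $2n+1$ is odd, and $J^{2}=I$); then $(JA^{T}J)_{ij}=(-1)^{j-i}c_{j-i}$, so $J\tilde w^{T}J$ has generating polynomial $W(-t)$ and your computation goes through verbatim. So the fix is to replace your stated form by this signed one (or, equivalently, conjugate by the diagonal matrix $\mathrm{diag}(1,-1,1,\dots)$ to match whatever realization of $SO_{2n+1}(\C)$ is fixed in \cite{CH1}); your own suggestion of checking $n=1$ first would have exposed this, but as written the choice of $J$ is the one step that would fail.
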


By Lemma \ref{lemma;SL:So}, the following definition makes sense.
\begin{definition} For $LG(n)=Sp_{2n}(\C)/P_n,$ we define
$\mathcal{W}_n=\mathcal{V}_{P_n}$ to  be the subvariety of
$SO_{2n+1}(\C)$ consisting of matrices  of the form
$\tilde{w}(W_1,...,W_{n+1})$ satisfying the relations $W_{i,i}=0$
for $i=1,...,n$.
\end{definition}

Note that if we view $W_i$ as coordinate functions of
$\mathcal{W}_n$, then the coordinate ring
$\mathcal{O}(\mathcal{W}_n)$ is $\C[W_1,...,W_{n+1}]/\mathcal{J},$
where $\mathcal{J}$ is generated by $W_{i,i}$ for all $i=1,...,n.$

\subsection{Comparing two presentation of the quantum cohomology ring}
 A Peterson's result for our cases can be stated as follows. See \cite{CH1} for an elementary proof.

\begin{theorem}[Peterson]\label{ppppp} We have isomorphisms of two rings.
\bn
\item The map $qH^*(OG(n),\C) \stackrel{\sim}{\rightarrow}
\mathcal{O}(\mathcal{V}_n)$ sending $\tau_i$ to $\frac{1}{2}V_i$ for
all $i\leq n$, and $q$ to $\frac{1}{4}V_n^2$ is an isomorphism.
\item The map $qH^*(\lg,\C) \stackrel{\sim}{\rightarrow}
\mathcal{O}(\mathcal{W}_n)$ sending $\sigma_i$ to $W_i$ for all
$i\leq n,$ and $q$ to $2W_{n+1}$ is an isomorphism.
 \en
\end{theorem}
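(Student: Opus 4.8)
The plan is to prove both isomorphisms by the same device: reduce each one to a direct comparison of two explicit presentations --- the Kresch--Tamvakis presentation of the quantum cohomology ring (Theorem~\ref{quantum coho:oge}, resp. Theorem~\ref{quantum coho:lg}) and the coordinate-ring presentation of the Peterson stratum recorded right after its definition. Both sides are then quotients of polynomial rings in the \emph{same} number of variables, and the asserted assignment on generators extends to an invertible linear change of coordinates on the ambient polynomial rings; so the entire content is to verify that the two defining ideals correspond under that change of coordinates. The nontrivial inputs --- that $\mathcal{V}_n$ and $\mathcal{W}_n$ are indeed the Peterson strata attached to these $G/P$, and that the Kresch--Tamvakis relations really present the quantum rings --- are furnished by \cite{CH1} and \cite{KT1}, \cite{KT2} and are taken as given.

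For $\og$: by Theorem~\ref{quantum coho:oge}, $qH^*(\og,\C)=\C[\tau_1,\dots,\tau_n,q]/(\tau_{1,1},\dots,\tau_{n-1,n-1},\tau_n^2-q)$, and eliminating $q$ via $q=\tau_n^2$ gives $\C[\tau_1,\dots,\tau_n]/(\tau_{1,1},\dots,\tau_{n-1,n-1})$; on the other side the remark after the definition of $\mathcal{V}_n$ gives $\mathcal{O}(\mathcal{V}_n)=\C[V_1,\dots,V_n]/(V_{1,1},\dots,V_{n-1,n-1})$. The assignment $\tau_i\mapsto\tfrac12 V_i$ is an isomorphism of the two polynomial rings (inverse $V_i\mapsto 2\tau_i$), so it suffices to check it carries each $\tau_{i,i}$ to a unit multiple of $V_{i,i}$. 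Comparing the defining formula \eqref{tau_ii} for $\tau_{i,i}$ with \eqref{Vii} for $V_{i,i}$ --- and using the conventions $\tau_0=V_0=1$, $\tau_j=0$ for $j>n$, $V_j=0$ for $j\ge n+1$ --- the only discrepancy is that the $k=i$ summand of $V_{i,i}$ is $2(-1)^iV_{2i}$ while the corresponding term of $\tau_{i,i}$ is $(-1)^i\tau_{2i}$; tracking the factor $\tfrac12$ on the generators this is exactly absorbed, giving $\tau_{i,i}\mapsto\tfrac14 V_{i,i}$ for every $i$. Hence the change of coordinates descends to an isomorphism $qH^*(\og,\C)\stackrel{\sim}{\rightarrow}\mathcal{O}(\mathcal{V}_n)$, under which $q=\tau_n^2\mapsto\tfrac14 V_n^2$; this is part~(1).

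For $\lg$ the argument is parallel, with one extra sign bookkeeping step. By Theorem~\ref{quantum coho:lg}, $qH^*(\lg,\C)=\C[\sigma_1,\dots,\sigma_n,q]/(g_1,\dots,g_n)$, where $g_i:=\sigma_i^2+2\sum_{k=1}^{n-i}(-1)^k\sigma_{i+k}\sigma_{i-k}-(-1)^{n-i}\sigma_{2i-n-1}q$, while $\mathcal{O}(\mathcal{W}_n)=\C[W_1,\dots,W_{n+1}]/(W_{1,1},\dots,W_{n,n})$. The assignment $\sigma_i\mapsto W_i$ $(i\le n)$, $q\mapsto 2W_{n+1}$ is an invertible linear change of variables on the ambient polynomial rings, with inverse $W_i\mapsto\sigma_i$, $W_{n+1}\mapsto\tfrac12 q$; so again it is enough to match ideals. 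Under this map the quantum term $-(-1)^{n-i}\sigma_{2i-n-1}q$ is sent to $-2(-1)^{n-i}W_{n+1}W_{2i-n-1}=2(-1)^{n+1-i}W_{n+1}W_{2i-n-1}$ (since $(-1)^{n+1-i}=-(-1)^{n-i}$), which is precisely the $k=n+1-i$ summand of $W_{i,i}=W_i^2+2\sum_{k=1}^i(-1)^kW_{i+k}W_{i-k}$; the remaining summands of $g_i$ and of $W_{i,i}$ match term by term once one applies the conventions $W_0=1$, $W_j=0$ for $j\ge n+2$ and for $j<0$ (the ``extra'' indices on either side --- $i+k\ge n+2$ for $W_{i,i}$, or $i-k<0$ for $g_i$ --- simply drop out). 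Thus $g_i\mapsto W_{i,i}$ for all $i$, the change of coordinates descends to an isomorphism $qH^*(\lg,\C)\stackrel{\sim}{\rightarrow}\mathcal{O}(\mathcal{W}_n)$ sending $q\mapsto 2W_{n+1}$, which is part~(2).

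The work here is essentially bookkeeping rather than conceptual: the Kresch--Tamvakis relations are packaged as ``unbalanced'' sums $\sum_{k=1}^{n-i}(\cdots)$ plus a single quantum correction, whereas $V_{i,i}$ and $W_{i,i}$ are ``balanced'' Newton-type expressions $\sum_{k=1}^{i}(\cdots)$, and the only real task is to see that --- after the linear substitution and after killing out-of-range indices --- these are literally the same polynomials, with all signs and the scalars $\tfrac12$, $\tfrac14$, $2$ correctly accounted for. I expect that sign-and-range bookkeeping to be the one place where care is needed; everything else (well-definedness, surjectivity, injectivity) is automatic once the defining ideals are shown to correspond under an isomorphism of the ambient polynomial rings.
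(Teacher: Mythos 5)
Your proposal is correct: with the paper's own definitions of $\mathcal{V}_n$ and $\mathcal{W}_n$ (and the noted presentations $\mathcal{O}(\mathcal{V}_n)=\C[V_1,\dots,V_n]/(V_{i,i})$, $\mathcal{O}(\mathcal{W}_n)=\C[W_1,\dots,W_{n+1}]/(W_{i,i})$), the theorem reduces to exactly the ideal-matching you perform, and your sign/scalar bookkeeping ($\tau_{i,i}\mapsto\tfrac14 V_{i,i}$, $q=\tau_n^2\mapsto\tfrac14 V_n^2$, and $g_i\mapsto W_{i,i}$ with the quantum term becoming the $k=n+1-i$ summand) checks out in all index ranges. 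The paper itself gives no proof, deferring to \cite{CH1}, and the elementary argument there is this same comparison of the Kresch--Tamvakis presentations with the coordinate-ring presentations, so your route is essentially the intended one.
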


\begin{notation}
\bn \item By the isomorphism
$qH^*(OG(n),\C)\stackrel{\sim}{=}\mathcal{O}(\mathcal{V}_n)$, each
$\tau \in qH^*(OG(n))$ defines a function on $\mathcal{V}_n$. We
denote this function by $\dot{\tau}=\dot{\tau}(V_1,...,V_n)$.

\item
Similarly, for $\sigma\in qH^*(LG(n)),$
$\dot{\sigma}=\dot{\sigma}(W_1,...,W_{n+1})$ denotes the function on
$\mathcal{W}_n$ corresponding to $\sigma$ under the isomorphism
$qH^*(LG(n))\stackrel{\sim}{=}\mathcal{O}(\mathcal{W}_n)$.\en
\end{notation}

\begin{example}
 For nonnegative integers $i\geq j$, let $V_{i,j}$ be the function on $\mathcal{V}_n$ defined by
$$V_{i,j}:=V_i
V_j+2\sum_{k=1}^{j}(-1)^k V_{i+k}V_{j-k},$$ where $V_0=1$ and
$V_l=0$ if $l<0$ or $l>n$. Then we have $V_{i,j}=4\dot{\tau}_{i,j}$
if $j\ne 0$, and if $j=0$ and $i\ne 0$, then
$V_{i,j}=V_i=2\dot{\tau}_i$. More generally, for $\lambda \in
\mathcal{D}(n)$,
 $\dot{\tau}_\lambda$ is
the function on $\mathcal{V}_n$  defined by
\begin{displaymath}
\dot{\tau}_{\lambda}=2^{-l}\textrm{Pfaffian}(V_{\lambda_i,\lambda_j})_{1\leq
i,j\leq r},
\end{displaymath}
where $r=2\lfloor(l+1)/2 \rfloor$ for $l=l(\lambda).$
\end{example}

\section{Analysis on points of Peterson's variety}

In this section, we record an explicit description of  elements of
$\mathcal{V}_n$  and $\mathcal{W}_n$ from Section $4$ of \cite{CH1}.

\subsection{Definitions and Notations.}\label{def and nota}
Let  $\zeta=\zeta_n$ be the primitive $2n$-th root of unity, i.e.,
$\zeta_n=e^{\frac{\pi i}{n}}.$ Let $\mathcal{T}_n$ be the set of all
$n$-tuples $J=(j_1,...,j_n),$ $-\frac{n-1}{2}\leq j_1<\cdots<j_n\leq
\frac{3n-1}{2}, $ such that $\zeta^J:=(\zeta^{j_1},...,\zeta^{j_n})$
is an $n$-tuple of distinct $2n$-th roots of $(-1)^{n+1}.$ Let us
call $I=(i_1,...,i_n)\in \mathcal{T}_n$ $exclusive$  if
$\zeta^{i_k}\ne -\zeta^{i_l}$ for all $k,l=1,...,n.$

Define subsets $\In$, $\In^e$ and $\In^o$ of $\mathcal{T}_n$  as
$$\mathcal{I}_n:=\{ I \in \mathcal{T}_n\hspace{0.05in}|\hspace{0.05in} I \hspace{0.05in}\mathrm{exclusive}\},$$
$$\In^e:=\{ I \in \In \hspace{0.05in}|\hspace{0.05in} E_n(\zeta^I)=1\},$$
$$\In^o:= \{ I \in \In \hspace{0.03in}|\hspace{0.03in} E_n(\zeta^I)=-1\}.$$

\begin{remark}
We can easily check that $|\mathcal{I}_n|=2^n=|\mathcal{D}(n)|.$
Note that $\In=\In^e \sqcup \In^o$ since $E_n^2(\zeta^I)=1$ for
$I\in \In$ by Lemma \ref{Lemma:Exclusive} below. Since
$|\In^e|=|\In^o|,$ it follows that $|\In^e|=|\In^o|=2^{n-1}$.
\end{remark}

Now we characterize exclusive $n$-tuples in terms of elementary
symmetric functions.
\begin{lemma}\label{Lemma:Exclusive}
If $I$ is exclusive, then $E_i(\zeta^{2I})=0$ for $i=1,2,...n-1$ and
$E_n^2(\zeta^I)=1.$
\end{lemma}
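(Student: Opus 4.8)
The plan is to understand what "exclusive" buys us. Recall $I = (i_1,\dots,i_n) \in \mathcal{T}_n$ means $\zeta^I = (\zeta^{i_1},\dots,\zeta^{i_n})$ consists of $n$ distinct $2n$-th roots of $(-1)^{n+1}$, i.e.\ the $\zeta^{i_k}$ are distinct among the $2n$ solutions of $z^{2n} = (-1)^{n+1}$; and exclusive adds that no $\zeta^{i_k}$ is the negative of another $\zeta^{i_l}$ (in particular $k=l$ is allowed, but since the roots are nonzero that case is automatic, so the content is $\zeta^{i_k} \ne -\zeta^{i_l}$ for $k\ne l$). First I would record the basic algebraic fact: the set $R$ of all $2n$ roots of $z^{2n} = (-1)^{n+1}$ is closed under $z \mapsto -z$, so it partitions into $n$ antipodal pairs $\{w, -w\}$. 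The exclusivity condition says that $\zeta^I$ picks at most one element from each antipodal pair, and since $|\zeta^I| = n$, it picks \emph{exactly} one from each pair. Hence the $n$-tuple $\zeta^{2I} = (\zeta^{2i_1},\dots,\zeta^{2i_n})$ — the squares — consists of $n$ \emph{distinct} values, and these are exactly the $n$ solutions of $w^n = (-1)^{n+1}$ (since squaring sends the $2n$ roots of $z^{2n}=(-1)^{n+1}$ two-to-one onto the $n$ roots of $w^n = (-1)^{n+1}$, and we are hitting each fiber once).

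**The $E_i(\zeta^{2I})$ claim.** Once we know $\zeta^{2i_1},\dots,\zeta^{2i_n}$ are precisely the $n$ roots of $w^n - (-1)^{n+1} = 0$, we get
\[
\prod_{k=1}^n (w - \zeta^{2i_k}) = w^n - (-1)^{n+1} = w^n + (-1)^n.
\]
Expanding the left side, the coefficient of $w^{n-i}$ is $(-1)^i E_i(\zeta^{2I})$. Comparing with the right side, all intermediate coefficients vanish: $E_i(\zeta^{2I}) = 0$ for $i = 1,\dots,n-1$, which is the first assertion. (The $i=n$ term gives $(-1)^n E_n(\zeta^{2I}) = (-1)^n$, i.e.\ $E_n(\zeta^{2I}) = 1$, consistent but not needed.)

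**The $E_n^2(\zeta^I) = 1$ claim.** This is immediate from the same factorization once we observe $E_n(\zeta^{2I}) = \prod_k \zeta^{2i_k} = \big(\prod_k \zeta^{i_k}\big)^2 = E_n(\zeta^I)^2$. Since we just showed $E_n(\zeta^{2I}) = 1$, we conclude $E_n(\zeta^I)^2 = 1$. Alternatively, and perhaps cleaner to present: $\prod_k \zeta^{i_k}$ is, up to sign, the product of one representative from each antipodal pair of roots of $z^{2n} = (-1)^{n+1}$; squaring it gives the product of \emph{all} $2n$ roots, which is $(-1)^{2n}\cdot\big((-1)^{n+1}\big)$ by Vieta on $z^{2n} - (-1)^{n+1}$... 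I would just go with the first argument since it reuses the factorization already in hand.

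**Main obstacle.** There is no serious obstacle; the only thing to be careful about is the bookkeeping that squaring is a well-defined bijection from $\{$antipodal pairs in the $2n$-th root locus$\}$ to $\{n$-th root locus$\}$ and that exclusivity is exactly the statement that $\zeta^I$ is a transversal of those pairs. I would state that transversal observation as a short preliminary paragraph (it will also be reused later, e.g.\ for counting $|\mathcal{I}_n| = 2^n$), and then the two displayed computations finish the proof in a couple of lines.
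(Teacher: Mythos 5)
Your proof is correct, and it rests on the same key observation as the paper's, namely that exclusivity forces $\zeta^I$ to be a transversal of the $n$ antipodal pairs among the $2n$-th roots of $(-1)^{n+1}$, so that the squared tuple $\zeta^{2I}$ is a fixed configuration independent of $I$. The execution differs slightly: the paper identifies $\zeta^{2I}$ with $\zeta^{2I_0}$ for the reference tuple $I_0=(-\tfrac{n-1}{2},\dots,\tfrac{n-1}{2})$ and then simply asserts the base computations $E_i(\zeta^{2I_0})=0$ and $E_n(\zeta^{2I_0})=1$, citing Proposition \ref{property-of-Qt-poly}(1) for the identity $E_n^2(\zeta^I)=E_n(\zeta^{2I})$; you instead identify $\zeta^{2I}$ directly with the full set of roots of $w^n=(-1)^{n+1}$ and read off all the elementary symmetric functions at once from the factorization $\prod_k(w-\zeta^{2i_k})=w^n+(-1)^n$ via Vieta, with $E_n^2(\zeta^I)=E_n(\zeta^{2I})$ obtained as the trivial product identity. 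Your version is therefore more self-contained (it supplies the verifications the paper leaves as ``we can easily check'' and ``note that''), at the cost of no new idea; the paper's reduction to $I_0$ buys brevity and reuses a tuple that reappears later in the argument (e.g.\ in Lemma \ref{eigenvalue} and Theorem \ref{Coro:eigenvalue}).
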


\begin{proof}
Note that $I_0:=(-\frac{n-1}{2},-\frac{n-1}{2}+1,...,\frac{n-1}{2})$
is exclusive, $E_i(\zeta^{2I_0})=0$ for $i=1,...,n-1$ and
$E_n(\zeta^{2I_0})=1.$ If $I$ is exclusive, we can easily check that
$\zeta^{2I}=\zeta^{2I_0}$. Thus $E_i(\zeta^{2I})=0$ for
$i=1,...,n-1,$ and $E_n^2(\zeta^I)=E_n(\zeta^{2I})=1$ by $(1)$ of
Proposition \ref{property-of-Qt-poly}.
\end{proof}

 We now characterize the elements of
$\mathcal{V}_n$ and $\mathcal{W}_n$ more explicitly. To do this, we
introduce the following notations.
\begin{notation}
\bn \item For $a_1,...,a_{n} \in \C,$ let $v(a_1,...,a_n)$ be the
matrix in $SL_{2n}(\C)$ defined  by
$$v(a_1,...,a_n)=\tilde{v}(E_1(a_1,...,a_n),...,E_n(a_1,...,a_n)),$$ where $\tilde{v}$ was given in (\ref{eltV}).
\item For $b_1,...,b_{n+1}\in \C,$
 let $w(b_1,...,b_{n+1})$ be the matrix in $SL_{2n+1}(\C)$ defined by
$$w(b_1,...,b_{n+1})=\tilde{w}(E_1(b_1,...,b_{n+1}),...,E_{n+1}(b_1,...,b_{n+1})),$$
where $\tilde{w}$ was given in (\ref{eltW}).\en
\end{notation}

\begin{proposition}(\cite{CH1}, Lemmas $5.2$, $5.3$)\label{pp} Elements of $\mathcal{V}_n$ and $\mathcal{W}_n$ are characterized as follows.
\bn
\item All elements of $\mathcal{V}_n$ are exactly of the form
$v(t\zeta^I)$ with
 $t\in \C$ and $I \in \In.$
 \item
 All elements of $\mathcal{W}_n$ are exactly of the form
$w(t\zeta^I)$ with $t\in \C$ and $I \in \mathcal{I}_{n+1}$. \en
\end{proposition}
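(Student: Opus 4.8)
The plan is to translate the Lie-theoretic description of $\mathcal{V}_n$ and $\mathcal{W}_n$ into an explicit system of equations among elementary symmetric functions and then solve it by hand. I will describe the argument for $\mathcal{V}_n$ in detail; the one for $\mathcal{W}_n$ is word for word the same after replacing $n$ by $n+1$ (and $\zeta_n$ by $\zeta_{n+1}$) in the appropriate places.

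\emph{Reduction to symmetric functions.} Given $\tilde{v}(V_1,\dots,V_n)\in\mathcal{V}_n$, I would first factor the polynomial $1+V_1x+\cdots+V_nx^n$ over $\C$; since its constant term is $1$, all of its roots are nonzero, so (padding with zeros if the degree is less than $n$) it can be written as $\prod_{j=1}^n(1+a_jx)$, whence $V_i=E_i(a_1,\dots,a_n)$ and $\tilde{v}(V_1,\dots,V_n)=v(a_1,\dots,a_n)$. Next I would rewrite the defining relations in terms of the $a_j$: multiplying $\sum_k(-1)^kE_k(a)x^k$ by $\sum_l E_l(a)x^l$ gives $\prod_j(1-a_j^2x^2)$, and reading off the coefficient of $x^{2i}$ (equivalently, using part $(1)$ of Proposition~\ref{property-of-Qt-poly}) shows $V_{i,i}=E_i(a_1^2,\dots,a_n^2)$. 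Hence $a\mapsto v(a)$ maps the set $\{\,a\in\C^n : E_i(a_1^2,\dots,a_n^2)=0 \text{ for } i=1,\dots,n-1\,\}$ onto $\mathcal{V}_n$, and the proposition reduces to identifying this set with $\{\,t\zeta^I : t\in\C,\ I\in\In\,\}$.

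\emph{Solving the system.} The relations $E_1(a^2)=\cdots=E_{n-1}(a^2)=0$ say precisely that $\prod_j(x-a_j^2)=x^n+(-1)^nE_n(a)^2$, so $a_1^2,\dots,a_n^2$ are exactly the $n$-th roots of $(-1)^{n+1}E_n(a)^2$. If $E_n(a)=0$ then each $a_j=0$ and $v(a)=v(0,\dots,0)$, which is $v(t\zeta^I)$ with $t=0$ for any $I\in\In$. If $E_n(a)\neq0$, the $n$ numbers $a_j^2$ are distinct; fixing $t\in\C$ with $t^{2n}=E_n(a)^2$ and writing $(-1)^{n+1}=(\zeta^c)^{2n}$ with $c=0$ for $n$ odd and $c=\tfrac12$ for $n$ even, the $n$-th roots of $(-1)^{n+1}E_n(a)^2=t^{2n}\zeta^{2nc}$ are the distinct numbers $(t\zeta^{c+k})^2$ for $k=0,\dots,n-1$, where each $\zeta^{c+k}$ is a $2n$-th root of $(-1)^{n+1}$. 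Thus $a_j=\pm t\zeta^{c+k_j}$ for a bijection $j\mapsto k_j$, and since $-\zeta^{m}=\zeta^{m+n}$ we may write $a_j=t\zeta^{i_j}$ with $i_j\in\{c+k_j,\,c+k_j+n\}$; reducing the exponents modulo $2n$ (legitimate because $\zeta^{2n}=1$) into the window $-\tfrac{n-1}{2}\le i_1<\cdots<i_n\le\tfrac{3n-1}{2}$ produces a tuple $I=(i_1,\dots,i_n)$ of distinct exponents with $\zeta^I$ a tuple of $2n$-th roots of $(-1)^{n+1}$. Distinctness of the $a_j^2$ forces $\zeta^{i_k}\neq\pm\zeta^{i_l}$ for $k\neq l$, which is exactly exclusivity of $I$, so $I\in\In$ and $v(a)=v(t\zeta^I)$.

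\emph{Converse, the case of $\mathcal{W}_n$, and the main obstacle.} For the reverse inclusion, take $t\in\C$ and $I\in\In$: then $v(t\zeta^I)=\tilde{v}(V_1,\dots,V_n)$ with $V_i=t^iE_i(\zeta^I)$, and the identity from the reduction step gives $V_{i,i}=t^{2i}E_i(\zeta^{2I})$, which vanishes for $i=1,\dots,n-1$ by Lemma~\ref{Lemma:Exclusive}; Lemma~\ref{Lemma:SL:Sp} then puts $v(t\zeta^I)$ into $\Sp$, hence into $\mathcal{V}_n$. The assertion about $\mathcal{W}_n$ follows the identical pattern with $n$ replaced by $n+1$: one factors $1+W_1x+\cdots+W_{n+1}x^{n+1}=\prod_{j=1}^{n+1}(1+b_jx)$ to get $\tilde{w}(W)=w(b)$, checks $W_{i,i}=E_i(b_1^2,\dots,b_{n+1}^2)$, observes that $W_{i,i}=0$ for $i=1,\dots,n$ forces $b_1^2,\dots,b_{n+1}^2$ to be the $(n+1)$-th roots of $(-1)^nE_{n+1}(b)^2$, concludes $b_j=t\zeta_{n+1}^{i_j}$ with $I\in\mathcal{I}_{n+1}$, and invokes Lemma~\ref{lemma;SL:So} for the converse. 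The one step that demands care is the normalization inside the second paragraph: selecting $t$ correctly, getting the parity of the exponents right (integers when $n$ is odd, half-integers when $n$ is even), folding them into the prescribed index window via $\zeta^{2n}=1$, and matching distinctness of the squares $a_j^2$ with exclusivity of the tuple $I$; everything else is routine bookkeeping with elementary symmetric functions.
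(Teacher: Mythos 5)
Your argument is correct, and it is essentially self-contained where the paper is not: the paper offers no proof of this proposition, quoting it from \cite{CH1} (Lemmas $5.2$, $5.3$), so there is no in-text argument to compare against. What you supply is a complete derivation from the definitions as given here, and each step checks out: the factorization $1+V_1x+\cdots+V_nx^n=\prod_j(1+a_jx)$ (with zero-padding) realizes every $\tilde{v}(V)$ as some $v(a)$; the identity $V_{i,i}=E_i(a_1^2,\dots,a_n^2)$ is exactly part $(1)$ of Proposition \ref{property-of-Qt-poly}; the relations then force $\prod_j(x-a_j^2)=x^n+(-1)^nE_n(a)^2$, so the $a_j^2$ exhaust the $n$-th roots of $(-1)^{n+1}E_n(a)^2$ (distinct when $E_n(a)\ne 0$, all zero otherwise), and your normalization $a_j=t\zeta^{i_j}$ with the parity of exponents ($c=0$ or $\tfrac12$) and the window reduction is exactly the bookkeeping needed to land in $\mathcal{T}_n$, while distinctness of the squares is precisely exclusivity; the converse correctly combines Lemma \ref{Lemma:Exclusive} with Lemma \ref{Lemma:SL:Sp} (resp. Lemma \ref{lemma;SL:So}) so that membership in $Sp_{2n}(\C)$ (resp. $SO_{2n+1}(\C)$), hence in $\mathcal{V}_n$ (resp. $\mathcal{W}_n$), comes for free. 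One implicit point worth making explicit: in the forward direction you only use the relations $V_{i,i}=0$ and never the symplectic/orthogonal condition, which is legitimate precisely because Lemma \ref{Lemma:SL:Sp} shows those relations already imply it, so $\mathcal{V}_n$ is cut out by the relations alone. Given that the ingredients you use (the $\Qt_{i,i}$ identity, exclusivity, and the $SL\Rightarrow Sp/SO$ lemmas) are the same ones the paper imports from \cite{CH1}, your route is almost certainly the intended one, just written out in full.
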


\begin{remark}
 Note that for $\lambda \in \Dn$, the function $\dot{\tau}_{\lambda}$
evaluates on $v(t\zeta^I)\in \mathcal{V}_n$  to
$\Pt_\lambda(t\zeta^I)$, and  $\dot{\sigma}_{\lambda}$ evaluates on
$w(t\zeta^I) \in \mathcal{W}_n$  to $\Qt_\lambda(t\zeta^I)$).
\end{remark}

 For later use, here we record the evaluations on
$\mathcal{V}_n$ and $\mathcal{W}_n$ of $\dot{q}$ for the quantum
variable $q$ for $OG(n)$ and $LG(n).$

\begin{proposition}(\cite{CH1}, Lemma $5.5$)\label{lemma:quant12}
 The functions $\dot{q}$ on $\mathcal{V}_n$ and $\mathcal{W}_n$ evaluate as follows.
\bn
\item For
$v=v(t\zeta^I)\in \mathcal{V}_n,$ we have
$$\dot{q}(v)=\frac{1}{4}t^{2n}.$$
\item For
$w=w(t\zeta^I)\in \mathcal{W}_n,$ we have
$$\dot{q}(w)=2t^{n+1}E_{n+1}(\zeta^I), \hspace{0.06in}
\textrm{and}\hspace{0.1in} \dot{q}^2(w)=4t^{2n+2}.$$\en
\end{proposition}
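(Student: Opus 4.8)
The plan is to identify $\dot{q}$ explicitly in terms of the coordinate functions by tracing the quantum variable $q$ through Peterson's isomorphism of Theorem \ref{ppppp}, then to evaluate the resulting function at a point of the form given in Proposition \ref{pp}, and finally to apply Lemma \ref{Lemma:Exclusive} to simplify the square of an elementary symmetric value to $1$.

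\emph{Part (1).} By Theorem \ref{ppppp}(1), under the isomorphism $qH^*(OG(n),\C)\stackrel{\sim}{\rightarrow}\mathcal{O}(\mathcal{V}_n)$ the quantum variable $q$ goes to $\frac{1}{4}V_n^2$, so $\dot{q}$ is the function on $\mathcal{V}_n$ sending $\tilde{v}(V_1,\dots,V_n)$ to $\frac{1}{4}V_n^2$. For $v=v(t\zeta^I)\in\mathcal{V}_n$ with $t\in\C$ and $I\in\mathcal{I}_n$, we have $v=\tilde{v}(E_1(t\zeta^I),\dots,E_n(t\zeta^I))$ by definition of $v(\cdot)$, so its $V_n$-coordinate equals $E_n(t\zeta^I)$. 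Since $E_n$ is homogeneous of degree $n$, $E_n(t\zeta^I)=t^n E_n(\zeta^I)$, and hence $\dot{q}(v)=\frac{1}{4}t^{2n}E_n(\zeta^I)^2$. Because $I$ is exclusive, Lemma \ref{Lemma:Exclusive} gives $E_n(\zeta^I)^2=1$, so $\dot{q}(v)=\frac{1}{4}t^{2n}$.

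\emph{Part (2).} By Theorem \ref{ppppp}(2), under the isomorphism $qH^*(\lg,\C)\stackrel{\sim}{\rightarrow}\mathcal{O}(\mathcal{W}_n)$ the variable $q$ goes to $2W_{n+1}$, so $\dot{q}$ is the function on $\mathcal{W}_n$ sending $\tilde{w}(W_1,\dots,W_{n+1})$ to $2W_{n+1}$. For $w=w(t\zeta^I)\in\mathcal{W}_n$ with $t\in\C$ and $I\in\mathcal{I}_{n+1}$, we have $w=\tilde{w}(E_1(t\zeta^I),\dots,E_{n+1}(t\zeta^I))$ by definition of $w(\cdot)$, so its $W_{n+1}$-coordinate equals $E_{n+1}(t\zeta^I)=t^{n+1}E_{n+1}(\zeta^I)$ by homogeneity; thus $\dot{q}(w)=2t^{n+1}E_{n+1}(\zeta^I)$. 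Squaring and invoking Lemma \ref{Lemma:Exclusive} in rank $n+1$ (which gives $E_{n+1}(\zeta^I)^2=1$ for the exclusive tuple $I\in\mathcal{I}_{n+1}$) yields $\dot{q}^2(w)=4t^{2n+2}$.

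Both parts are routine once Peterson's isomorphism is in hand, so there is no substantial obstacle; the only points needing care are keeping track of the normalization constants $\frac{1}{4}$ and $2$ appearing in Theorem \ref{ppppp}, and remembering that the elements of $\mathcal{W}_n$ are parametrized by $I\in\mathcal{I}_{n+1}$ rather than $\mathcal{I}_n$, so that Lemma \ref{Lemma:Exclusive} must be applied with $n$ replaced by $n+1$.
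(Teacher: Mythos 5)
Your proof is correct: the paper itself gives no argument for this proposition (it is quoted from \cite{CH1}, Lemma 5.5), and your derivation --- reading off $\dot{q}$ as $\frac{1}{4}V_n^2$, resp.\ $2W_{n+1}$, from Theorem \ref{ppppp}, evaluating at $v(t\zeta^I)$, resp.\ $w(t\zeta^I)$, via homogeneity of $E_n$, resp.\ $E_{n+1}$, and then using Lemma \ref{Lemma:Exclusive} (in rank $n+1$ for $\mathcal{W}_n$) to get $E_n^2(\zeta^I)=1$, resp.\ $E_{n+1}^2(\zeta^I)=1$ --- is exactly the computation the citation is standing in for. You also correctly flag the two points of care: the normalization constants in Theorem \ref{ppppp} and the fact that points of $\mathcal{W}_n$ are indexed by $\mathcal{I}_{n+1}$ rather than $\mathcal{I}_n$.
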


\begin{definition} Let $\mathcal{V}_n^\prime$  (resp. $\mathcal{W}_n^\prime$) be the subvariety
 of $\mathcal{V}_n$   (resp. $\mathcal{W}_n$)
 defined by the function $\dot{q}=1.$
 \end{definition}

\begin{corollary}\label{cor:expression}
Let $\epsilon=\epsilon_n=(4)^{\frac{1}{2n}}.$ Then $v^\prime \in
\mathcal{V}_n^\prime$ if and only if there exists a unique $I\in
\mathcal{I}_n$ such that $v^\prime=v(\epsilon \zeta^I)$.
 \end{corollary}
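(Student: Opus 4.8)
The plan is to unpack the corollary directly from Proposition~\ref{pp}(1) and Proposition~\ref{lemma:quant12}(1), using two elementary observations. First, the matrix $v(a_1,\dots,a_n)=\tilde v(E_1(a_1,\dots,a_n),\dots,E_n(a_1,\dots,a_n))$ depends only on the unordered $n$-tuple $\{a_1,\dots,a_n\}$, since the $E_k$ are symmetric; conversely $v(a_1,\dots,a_n)$ records $E_1(a),\dots,E_n(a)$ and hence determines $\{a_1,\dots,a_n\}$ as the root set of $X^n-E_1X^{n-1}+\cdots+(-1)^nE_n$. Second, for $v=v(t\zeta^I)\in\mathcal V_n$ we have $\dot q(v)=\tfrac14 t^{2n}$. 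I will also use the fact, implicit in the definition of $\mathcal T_n$, that the admissible exponent range $-\tfrac{n-1}{2}\le j_1<\cdots<j_n\le\tfrac{3n-1}{2}$ forms a complete system of representatives for the set of $2n$-th roots of $(-1)^{n+1}$ under $j\mapsto\zeta^j$ (which is why $J\mapsto\{\zeta^{j_1},\dots,\zeta^{j_n}\}$ identifies $\mathcal I_n$ with the $n$-element subsets of that root set having no two elements negatives of each other).

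For the ``if'' direction together with uniqueness: if $v'=v(\epsilon\zeta^I)$ with $I\in\mathcal I_n$, then $v'\in\mathcal V_n$ by Proposition~\ref{pp}(1), and $\dot q(v')=\tfrac14\epsilon^{2n}=\tfrac14\cdot 4=1$, so $v'\in\mathcal V_n'$. If moreover $v(\epsilon\zeta^I)=v(\epsilon\zeta^J)$ with $I,J\in\mathcal I_n$, then by the first observation the multisets $\{\epsilon\zeta^{i_a}\}$ and $\{\epsilon\zeta^{j_a}\}$ agree, hence $\{\zeta^{i_a}\}=\{\zeta^{j_a}\}$ after dividing by $\epsilon\ne0$; since the exponents of an element of $\mathcal T_n$ are a sorted complete set of representatives, this root set determines the tuple, so $I=J$.

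For the ``only if'' direction — the part that actually needs an argument — let $v'\in\mathcal V_n'\subset\mathcal V_n$. By Proposition~\ref{pp}(1), $v'=v(t\zeta^I)$ for some $t\in\C$ and $I\in\mathcal I_n$, and $v'\in\mathcal V_n'$ forces $\tfrac14 t^{2n}=1$, i.e. $t^{2n}=4$. Since $\zeta=\zeta_n=e^{\pi i/n}$ is a primitive $2n$-th root of unity and $\epsilon=\epsilon_n=4^{1/(2n)}>0$, the solutions are $t=\epsilon\zeta^k$ for $k=0,\dots,2n-1$, so $t\zeta^I=\epsilon\,(\zeta^{i_1+k},\dots,\zeta^{i_n+k})$. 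Each $\zeta^{i_a+k}$ is again a $2n$-th root of $(-1)^{n+1}$, since $(\zeta^{i_a+k})^{2n}=(\zeta^{i_a})^{2n}(\zeta^k)^{2n}=(-1)^{n+1}\cdot 1$; the $n$ numbers $\zeta^{i_a+k}$ are distinct; and no two are negatives, because $\zeta^{i_a+k}=-\zeta^{i_b+k}$ would give $\zeta^{i_a}=-\zeta^{i_b}$, contradicting exclusivity of $I$. Hence $\{\zeta^{i_a+k}\}_a$ equals $\{\zeta^{j_1},\dots,\zeta^{j_n}\}$ for a unique $J=(j_1<\cdots<j_n)\in\mathcal T_n$, and this $J$ is exclusive, i.e. $J\in\mathcal I_n$. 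By the first observation, $v'=v(t\zeta^I)=v(\epsilon\zeta^J)$, which completes the proof.

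The computation is short, and the only mild obstacle is the bookkeeping: verifying that multiplication by $\zeta^k$ carries $\mathcal I_n$ into itself after re-sorting, and that the exponent range in the definition of $\mathcal T_n$ picks out exactly one representative of each $2n$-th root of $(-1)^{n+1}$ (this involves the case distinction according to the parity of $n$). Both points are already handled in \cite{CH1} and can simply be quoted if one prefers to keep the argument brief.
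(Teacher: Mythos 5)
Your argument is correct and follows essentially the same route as the paper's proof: write $v'=v(t\zeta^J)$ via Proposition~\ref{pp}, use $\dot q(v')=\tfrac14 t^{2n}=1$ to get $t=\epsilon\zeta^k$, and absorb the rotation by the $2n$-th root of unity $t/\epsilon$ into a new exclusive tuple in $\mathcal{I}_n$. The only difference is that you spell out the bookkeeping the paper leaves implicit (rotation preserves exclusivity, and uniqueness via the fact that the $E_k$ determine the root set), which is a harmless elaboration rather than a different approach.
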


\begin{proof}
The direction $(\Leftarrow)$ is obvious. For the converse, let
$v^\prime \in \mathcal{V}_n^\prime $. Then, by Proposition \ref{pp},
$v^\prime$ can be written as $v^\prime=v(t\zeta^J)$ for some $t\in
\C$ and $J\in \mathcal{I}_n.$ Note that
  $\dot{q}(v^\prime)=\frac{1}{4}t^{2n}=1$ since $v^\prime \in
\mathcal{V}_n^\prime $.  Now, since $\frac{t}{\epsilon}$ is a
$2n$-th root of unity,  and $\frac{t}{\epsilon}\zeta^J$ is an
$n$-tuple $\zeta^J$ rotated  by $\mathrm{arg}(\frac{t}{\epsilon})$
in each entry, there is a unique $I\in \In$ such that
$\zeta^I=\frac{t}{\epsilon}\zeta^J$, i.e., $\epsilon \zeta^I=t
\zeta^J$ . This proves the corollary.
\end{proof}

 \begin{corollary}Let
 $\delta=\delta_{n}:=(\frac{1}{2})^{\frac{1}{n+1}}$.  Then $w^\prime \in
\mathcal{W}_n^\prime$ if and only if there exists a unique $I\in
\mathcal{I}_{n+1}^e$ such that $w^\prime=v(\delta\zeta^I)$.
\end{corollary}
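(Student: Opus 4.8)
The plan is to replay the argument of Corollary~\ref{cor:expression} with the $\mathcal{W}_n$-data in place of the $\mathcal{V}_n$-data, invoking Proposition~\ref{pp}(2) and Proposition~\ref{lemma:quant12}(2); the one genuinely new feature is the sign $E_{n+1}(\zeta^J)=\pm1$ occurring in the formula $\dot q\bigl(w(t\zeta^J)\bigr)=2t^{n+1}E_{n+1}(\zeta^J)$, which is precisely what pins the index tuple down inside $\mathcal{I}_{n+1}^e$ rather than merely $\mathcal{I}_{n+1}$. Here $\mathcal{I}_{n+1}$ and $\mathcal{I}_{n+1}^e$ are the level-$(n+1)$ sets built from $\zeta_{n+1}=e^{\pi i/(n+1)}$, and I will use the level-$(n+1)$ form of Lemma~\ref{Lemma:Exclusive}, namely $E_{n+1}^2(\zeta^J)=1$ for every $J\in\mathcal{I}_{n+1}$. (Note that the displayed conclusion should read $w^\prime=w(\delta\zeta^I)$, not $v(\delta\zeta^I)$.)

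The direction $(\Leftarrow)$ is immediate: for $I\in\mathcal{I}_{n+1}^e$, Proposition~\ref{pp}(2) gives $w(\delta\zeta^I)\in\mathcal{W}_n$, and by Proposition~\ref{lemma:quant12}(2) together with $\delta^{n+1}=\tfrac12$ we get $\dot q\bigl(w(\delta\zeta^I)\bigr)=2\delta^{n+1}E_{n+1}(\zeta^I)=2\cdot\tfrac12\cdot1=1$, so $w(\delta\zeta^I)\in\mathcal{W}_n^\prime$.

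For $(\Rightarrow)$ I would take $w^\prime\in\mathcal{W}_n^\prime$, write $w^\prime=w(t\zeta^J)$ with $t\in\C$ and $J\in\mathcal{I}_{n+1}$ by Proposition~\ref{pp}(2), and set $\varepsilon:=E_{n+1}(\zeta^J)\in\{\pm1\}$. From $\dot q(w^\prime)=1$ and Proposition~\ref{lemma:quant12}(2) one obtains $2t^{n+1}\varepsilon=1$, hence $t^{n+1}=\varepsilon/2$, so $\xi:=t/\delta$ satisfies $\xi^{n+1}=\varepsilon$ and in particular $\xi^{2(n+1)}=\varepsilon^2=1$: thus $\xi$ is a $2(n+1)$-th root of unity. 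Rotating every entry of $\zeta^J$ by $\arg\xi$ then again produces an $(n+1)$-tuple of distinct $2(n+1)$-th roots of $(-1)^n$ that is still exclusive, so exactly as in Corollary~\ref{cor:expression} there is a unique $I\in\mathcal{I}_{n+1}$ with $\zeta^I=\xi\zeta^J$; consequently $\delta\zeta^I=t\zeta^J$ and $w^\prime=w(\delta\zeta^I)$. Finally $E_{n+1}(\zeta^I)=\xi^{n+1}E_{n+1}(\zeta^J)=\varepsilon\cdot\varepsilon=1$, so $I\in\mathcal{I}_{n+1}^e$. Uniqueness goes as in Corollary~\ref{cor:expression}: $w(\delta\zeta^I)=w(\delta\zeta^{I'})$ forces equality of all matrix entries, hence $E_i(\delta\zeta^I)=E_i(\delta\zeta^{I'})$ and so $E_i(\zeta^I)=E_i(\zeta^{I'})$ for $1\le i\le n+1$, which makes $\zeta^I$ and $\zeta^{I'}$ the same set of roots of unity, and therefore $I=I'$.

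The main obstacle — really the only place where the proof departs from Corollary~\ref{cor:expression} — is bookkeeping the sign $\varepsilon$: the rescaling that normalizes $t$ to $\delta$ is a $2(n+1)$-th root of unity whose $(n+1)$-th power is $\varepsilon$ rather than $1$, and I must check both that this rotation keeps the tuple inside $\mathcal{I}_{n+1}$ and that it upgrades $J$ to an index $I$ lying in $\mathcal{I}_{n+1}^e$ irrespective of whether $J$ was in $\mathcal{I}_{n+1}^e$ or in $\mathcal{I}_{n+1}^o$. Both checks reduce to $\varepsilon^2=1$ and are routine.
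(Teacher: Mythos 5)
Your proof is correct and takes essentially the same route the paper intends: the paper disposes of this corollary by saying it is ``similar to the proof of Corollary~\ref{cor:expression}'', and your argument is exactly that rescaling-by-a-root-of-unity argument, supplemented by the only genuinely new point, namely tracking the sign $\varepsilon=E_{n+1}(\zeta^J)=\pm1$ from $\dot q(w(t\zeta^J))=2t^{n+1}E_{n+1}(\zeta^J)$ so that the normalized index lands in $\mathcal{I}_{n+1}^e$. You also correctly note the typo in the statement ($w(\delta\zeta^I)$, not $v(\delta\zeta^I)$), and your uniqueness argument via the elementary symmetric functions is sound.
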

\begin{proof}
The proof is similar to the proof of Corollary \ref{cor:expression}.
\end{proof}

\subsection{Orthogonality formulas}
\begin{lemma}\label{lemma:action}
For $I, J\in \In$ with $I\ne J,$ there is $w\in W_n\setminus S_n$
such that $(\zeta^I)^w=\zeta^J$.
\end{lemma}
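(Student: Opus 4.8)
\medskip

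The plan is to exhibit the Weyl group element $w$ concretely by matching up the two unordered multisets $\{\zeta^{i_1},\dots,\zeta^{i_n}\}$ and $\{\zeta^{j_1},\dots,\zeta^{j_n}\}$ coordinate by coordinate, keeping track of which entries require a sign change. First I would recall the key rigidity from Lemma \ref{Lemma:Exclusive}: since $I$ and $J$ are both exclusive, $\zeta^{2I}=\zeta^{2I_0}=\zeta^{2J}$, where $I_0=(-\tfrac{n-1}{2},\dots,\tfrac{n-1}{2})$. In other words, the multiset of squares $\{\zeta^{2i_k}\}$ equals the multiset of squares $\{\zeta^{2j_k}\}$, and each such square is a distinct $n$-th root of $\zeta^{2I_0}$ — in particular all $n$ squares are pairwise distinct. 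Consequently, for each index $k$ there is a \emph{unique} index $\pi(k)$ with $\zeta^{2i_k}=\zeta^{2j_{\pi(k)}}$, and $\pi$ is a permutation of $\{1,\dots,n\}$; moreover for that $k$ we have $\zeta^{i_k}=\pm\zeta^{j_{\pi(k)}}$, with exactly one of the two signs occurring (it cannot be both, again because $\zeta^{i_k}\ne 0$).

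\medskip

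Next I would read off $w$ from this data. Define $w=(w_1,\dots,w_n)\in W_n$ by declaring that the $k$-th slot of $w$ carries the value $\pi^{-1}(k)$ if $\zeta^{j_k}=\zeta^{i_{\pi^{-1}(k)}}$, and the \emph{barred} value $\overline{\pi^{-1}(k)}$ if $\zeta^{j_k}=-\zeta^{i_{\pi^{-1}(k)}}$. By the definition of the action recalled just before Proposition \ref{Key}, the $k$-th entry of $(\zeta^I)^w$ is $\zeta^{i_{w_k}}$ if $w_k$ is unbarred and $-\zeta^{i_{w_k}}$ if $w_k$ is barred, which in either case equals $\zeta^{j_k}$ by construction. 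Hence $(\zeta^I)^w=\zeta^J$, as required. It remains only to check that $w\notin S_n$, i.e.\ that at least one bar occurs. Suppose for contradiction that $w\in S_n$; then $\zeta^{i_k}=\zeta^{j_{\pi(k)}}$ for every $k$, so the unordered tuples $\zeta^I$ and $\zeta^J$ coincide as multisets. But both $I$ and $J$ have strictly increasing entries in the fixed window $-\tfrac{n-1}{2}\le j_1<\cdots<j_n\le\tfrac{3n-1}{2}$, and on this window the map $j\mapsto \zeta^j$ is injective (its length is $2n$, exactly one full period). Therefore the ordered tuples coincide, forcing $I=J$, contrary to hypothesis. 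Thus $w\in W_n\setminus S_n$.

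\medskip

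The only genuinely delicate point is the last one — ensuring a bar actually appears — and the argument above reduces it to the observation that the residues $j_1<\cdots<j_n$ lie in a half-open interval of length $2n$, so that equality of the tuples $\zeta^I$ and $\zeta^J$ (even as multisets) forces $I=J$. Everything else is bookkeeping built on the identity $\zeta^{2I}=\zeta^{2J}$ from Lemma \ref{Lemma:Exclusive}, which supplies the permutation $\pi$ matching squares and hence pins down each $\zeta^{j_k}$ to be $\pm$ a unique $\zeta^{i_\ell}$. I would present the construction of $w$ first, verify $(\zeta^I)^w=\zeta^J$ by direct substitution into the definition of the action, and close with the short contradiction argument for $w\notin S_n$.
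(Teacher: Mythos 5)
Your proposal is correct and follows essentially the same route as the paper: exclusivity guarantees each $\zeta^{j_k}$ equals $\pm\zeta^{i_m}$ for a unique $m$, one reads off $w$ from this matching, and the hypothesis $I\ne J$ forces at least one sign change so that $w\in W_n\setminus S_n$. Your write-up merely makes explicit two points the paper leaves implicit (uniqueness of the matching via the distinctness of the squares $\zeta^{2i_k}$, and the injectivity of $j\mapsto\zeta^j$ on the indexing window, which shows that equality of the sets would force $I=J$), which is a welcome but not essentially different elaboration.
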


\begin{proof} Write $I=(i_1,...,i_n)$ and $J=(j_1,...,j_n)$. Let us
determine (entries $w_i$ of) $w=(w_1,...,w_n)\in W_n\setminus S_n.$
First note that since $I$ is exclusive, the union of two sets
$\{\zeta^{i_1},...,\zeta^{i_n}\}\cup
\{-\zeta^{i_1},...,-\zeta^{i_n}\} $ equals the set of all $2n$-th
roots of $(-1)^{n+1}.$ Therefore for each $k=1,...,n$, the entry
$\zeta^{j_k}$ of $\zeta^{J}$ is $\zeta^{i_m}$ or $-\zeta^{i_m}$ for
some $1\leq m\leq n.$ Then put $w_m=k$ (resp. $\bar{k}$) if
$\zeta^{j_k}=\zeta^{i_m}$ (resp. $\zeta^{j_k}=-\zeta^{i_m}$). Then
for $w=(w_1,...,w_n)$ thus obtained, it is obvious that
$(\zeta^I)^w=\zeta^J$, and $w\in W_n\setminus S_n$ since
$\{\zeta^{i_1},...,\zeta^{i_n}\}\ne
\{\zeta^{j_1},...,\zeta^{j_n}\}$.
\end{proof}

\begin{proposition}\label{p}
\bn
\item
 For $I,J \in \mathcal{I}_n$ and $t\in \C,$ we have
$$\sum_{\lambda \in \mathcal{D}(n)}\Pt_{\lambda}(t\zeta^I)\Pt_{
\hat{\lambda}}(t\zeta^J)=\delta_{I,J}S_{\rho_n}(t\zeta^I),$$
\item
For $I,J \in \mathcal{I}_{n+1}^e$ and $t\in \C,$ we have
\be\label{formula} \sum_{\lambda \in
\mathcal{D}(n)}t^{n+1}\Qt_{\lambda}(t\zeta^I)\Qt_{
\hat{\lambda}}(t\zeta^J)=\delta_{I,J}2^{n}S_{\rho_{n+1}}(t\zeta^I),\ee
\en
 where we denote $\rho_n:=(n,n-1,...,1)$.
 \end{proposition}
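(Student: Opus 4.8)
The plan is to read off both identities from the polynomial identity of Proposition~\ref{Key}, using Lemma~\ref{lemma:action} to handle the off-diagonal terms, and then to deduce (2) from (1) applied one level higher.

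For (1): when $I=J$, I would specialize the identity of Proposition~\ref{Key} at the $n$-tuple $X=t\zeta^I$ and at $w=\mathrm{id}\in S_n$; since $X^{\mathrm{id}}=X$, this reads $\sum_{\lambda\in\mathcal{D}(n)}\Pt_{\lambda}(t\zeta^I)\Pt_{\widehat{\lambda}}(t\zeta^I)=S_{\rho_n}(t\zeta^I)$, which is the claimed value because $\delta_{I,I}=1$. When $I\neq J$, I would apply Lemma~\ref{lemma:action} to the ordered pair $(J,I)$ to obtain some $w\in W_n\setminus S_n$ with $(\zeta^J)^w=\zeta^I$; since the $W_n$-action permutes and sign-changes the entries, it commutes with scaling all entries by $t$, so $(t\zeta^J)^w=t\,(\zeta^J)^w=t\zeta^I$ (trivially also when $t=0$). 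Then the vanishing case of Proposition~\ref{Key}, specialized at $X=t\zeta^J$, gives $\sum_{\lambda\in\mathcal{D}(n)}\Pt_{\lambda}(t\zeta^I)\Pt_{\widehat{\lambda}}(t\zeta^J)=\sum_{\lambda}\Pt_{\lambda}\!\big((t\zeta^J)^w\big)\Pt_{\widehat{\lambda}}(t\zeta^J)=0=\delta_{I,J}S_{\rho_n}(t\zeta^I)$. This proves (1) for every $n$.

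For (2): fix $I,J\in\mathcal{I}_{n+1}^{e}\subseteq\mathcal{I}_{n+1}$ and apply (1) with $n$ replaced by $n+1$, obtaining $\sum_{\mu\in\mathcal{D}(n+1)}\Pt_{\mu}(t\zeta^I)\Pt_{\mu^{*}}(t\zeta^J)=\delta_{I,J}S_{\rho_{n+1}}(t\zeta^I)$, where $\mu^{*}$ denotes the complement of $\mu$ in $\{1,\dots,n+1\}$. I would split $\mathcal{D}(n+1)$ according to whether $n+1$ is a part: the $\mu$ with no part equal to $n+1$ are exactly those in $\mathcal{D}(n)$, and the remaining ones are $(n+1,\lambda)$ with $\lambda\in\mathcal{D}(n)$; moreover $\lambda^{*}=(n+1,\widehat{\lambda})$ and $(n+1,\lambda)^{*}=\widehat{\lambda}$, where $\widehat{\lambda}$ is the complement of $\lambda$ in $\{1,\dots,n\}$, so that $l(\lambda)+l(\widehat{\lambda})=n$. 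Now I would rewrite everything via $\Pt_{\nu}=2^{-l(\nu)}\Qt_{\nu}$ and via the identity $\Qt_{\nu}\cdot\Qt_{n+1}=\Qt_{(n+1,\nu)}$ (Proposition~\ref{property-of-Qt-poly}(2), applied with $n+1$ variables) valid for $\nu\in\mathcal{D}(n)$; combined with the length count, each term of the first sub-sum becomes $2^{-(n+1)}\Qt_{\lambda}(t\zeta^I)\Qt_{\widehat{\lambda}}(t\zeta^J)\,\Qt_{n+1}(t\zeta^J)$ and each term of the second becomes $2^{-(n+1)}\Qt_{\lambda}(t\zeta^I)\Qt_{\widehat{\lambda}}(t\zeta^J)\,\Qt_{n+1}(t\zeta^I)$. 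Here the hypothesis $I,J\in\mathcal{I}_{n+1}^{e}$ enters: $\Qt_{n+1}(t\zeta^I)=E_{n+1}(t\zeta^I)=t^{n+1}E_{n+1}(\zeta^I)=t^{n+1}$, and likewise $\Qt_{n+1}(t\zeta^J)=t^{n+1}$, so the two sub-sums coincide, each being $2^{-(n+1)}t^{n+1}\sum_{\lambda\in\mathcal{D}(n)}\Qt_{\lambda}(t\zeta^I)\Qt_{\widehat{\lambda}}(t\zeta^J)$. Adding the two halves, equating with $\delta_{I,J}S_{\rho_{n+1}}(t\zeta^I)$, and multiplying by $2^{n}$ gives precisely \eqref{formula}.

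I do not anticipate a genuine obstacle: granting (1), the proof of (2) is bookkeeping, and the only points that require care are the decomposition of $\mathcal{D}(n+1)$ under the complement operation and the tracking of powers of $2$ when passing between $\Pt$- and $\Qt$-polynomials. The one conceptual subtlety worth flagging is that the parity condition $E_{n+1}(\zeta^I)=1$ defining $\mathcal{I}_{n+1}^{e}$ is precisely what forces the two halves of the split sum to be equal, producing the overall factor $2$ and hence the power $2^{n}$ on the right; the degenerate value $t=0$ needs no separate discussion, since both sides of \eqref{formula} then vanish.
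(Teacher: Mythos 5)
Your proof is correct and follows essentially the same route as the paper: part (1) is read off from Proposition~\ref{Key} together with Lemma~\ref{lemma:action}, and part (2) is deduced from (1) at level $n+1$ by splitting $\mathcal{D}(n+1)$ according to whether $n+1$ is a part, using $\Qt_{(n+1,\mu)}=\Qt_{n+1}\Qt_{\mu}$ and $E_{n+1}(\zeta^I)=1$ for $I\in\mathcal{I}_{n+1}^{e}$. The only difference is cosmetic: you carry both $I$ and $J$ through the computation for (2), whereas the paper writes the chain of equalities only in the diagonal case, with the off-diagonal case implicit.
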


\begin{proof}
By Lemma \ref{lemma:action}, for $I, J\in \In$ there is $w\in W_n
\setminus S_n$ such that
$(\zeta^{i_1},...,\zeta^{i_n})^w=(\zeta^{j_1},...,\zeta^{j_n}).$
Therefore $(1)$ is immediate from Proposition \ref{Key}. The
equality $(2)$ follows from the equalities

$$2^{n+1}S_{\rho_{n+1}}(t\zeta^I)=\sum_{\lambda \in \mathcal{D}(n+1)}\Qt_{\lambda}(t\zeta^I)\Qt_{
\hat{\lambda}}(t\zeta^I)$$
$$=\sum_{\mu \in \mathcal{D}(n)}2\Qt_{n+1}(t\zeta^I)\Qt_{\mu}(t\zeta^I)\Qt_{
\hat{\mu}}(t\zeta^I)=\sum_{\mu \in
\mathcal{D}(n)}2t^{n+1}E_{n+1}(\zeta^I)\Qt_{\mu}(t\zeta^I)\Qt_{
\hat{\mu}}(t\zeta^I).$$ Here the first equality follows from $(1)$
of Proposition \ref{p}, and the second equality follows from the
fact that for each pair $(\mu, \hat{\mu})\in \mathcal{D}(n)\times
\mathcal{D}(n),$ there are exactly two pairs
$(\lambda,\hat{\lambda})\in \mathcal{D}(n+1)\times
\mathcal{D}(n+1)$, i.e., $\lambda=(n+1,\mu)$ (and so
$\hat{\lambda}=(\hat{\mu},0)$), or $\hat{\lambda}=(n+1,\hat{\mu})$
(and so $\lambda=(\mu,0)$), for each of which we have
$$\Qt_{\lambda}(t\zeta^I)\Qt_{
\hat{\lambda}}(t\zeta^I)=\Qt_{n+1}(t\zeta^I)\Qt_{\mu}(t\zeta^I)\Qt_{\hat{\mu}}(t\zeta^I).$$
Since $I\in \mathcal{I}_{n+1}^e,$ i.e., $E_{n+1}(\zeta^I)=1,$ we get
$(2).$
\end{proof}

\section{Quantum Multiplication Operators}
In this section, we shall give a description of eigenvalues and
eigenvectors of multiplication operators on $qH^*(M(n),\C)_{q=1}$.
The following lemma will be used in Theorems \ref{Main-Thm1} and
\ref{Main-Thm2}.

\begin{lemma}\label{nonidentity}
\bn
\item For $S_{\rho_n}(x_1,...,x_n),$ $S_{\rho_n}(\zeta^I)\ne 0$  for any $I\in \In$.
\item For $S_{\rho_n}(x_1,...,x_{n+1}),$ $S_{\rho_n}(\zeta^J)\ne 0$  for any $J\in \mathcal{I}_{n+1}$.

\en
\end{lemma}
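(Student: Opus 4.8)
The plan is to recognize $S_{\rho_n}$ as a Vandermonde-type determinant and to show that evaluating it at $\zeta^I$ for $I\in\mathcal I_n$ (or $\mathcal I_{n+1}$) cannot produce a zero. Recall that $\rho_n=(n,n-1,\dots,1)$, so $\rho_{n,i}=n+1-i$ and hence $(\rho_n)_i+n-i = 2n+1-2i$ takes the distinct values $2n-1,2n-3,\dots,1$. Therefore, up to a sign and an overall monomial factor, the Schur polynomial $S_{\rho_n}(x_1,\dots,x_n)$ equals the ratio
\begin{displaymath}
\frac{\det\big(x_j^{\,2(n-i)+1}\big)_{1\le i,j\le n}}{\det\big(x_j^{\,n-i}\big)_{1\le i,j\le n}}
= x_1\cdots x_n\cdot\frac{\det\big(y_j^{\,n-i}\big)_{1\le i,j\le n}}{\det\big(x_j^{\,n-i}\big)_{1\le i,j\le n}}\Big|_{y_j=x_j^2},
\end{displaymath}
so that $S_{\rho_n}(x_1,\dots,x_n) = (x_1\cdots x_n)\,\prod_{i<j}(x_i+x_j)$. (This is the standard identity $s_{\rho_n}=\prod_i x_i\prod_{i<j}(x_i+x_j)$; I would either cite it or derive it from the two Vandermonde determinants above.)

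First I would record that for $I=(i_1,\dots,i_n)\in\mathcal I_n$, the tuple $\zeta^I=(\zeta^{i_1},\dots,\zeta^{i_n})$ consists of nonzero complex numbers (they are $2n$-th roots of $(-1)^{n+1}$), so the factor $x_1\cdots x_n$ does not vanish at $\zeta^I$. Next, the factors $x_i+x_j$ with $i\ne j$: these vanish at $\zeta^I$ precisely when $\zeta^{i_k}=-\zeta^{i_l}$ for some $k\ne l$, which is exactly what the definition of \emph{exclusive} forbids. Since $I\in\mathcal I_n$ means $I$ is exclusive, $\zeta^{i_k}\ne-\zeta^{i_l}$ for all $k,l$, so $\prod_{i<j}(\zeta^{i_i}+\zeta^{i_j})\ne 0$. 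Hence $S_{\rho_n}(\zeta^I)\ne0$, proving (1). For (2), the identical argument applies to $S_{\rho_n}(x_1,\dots,x_{n+1})$: here I should be slightly careful that the paper's notation means the Schur polynomial of the partition $\rho_n=(n,\dots,1)$ of length $n$ in $n+1$ variables. Either this again factors (after a Vandermonde computation in $n+1$ variables the partition $\rho_n$ padded by a zero gives $s_{\rho_n}(x_1,\dots,x_{n+1}) = e_n(x)\cdot\prod_{i<j}(x_i+x_j)$ divided by nothing problematic, or one uses the branching/Vandermonde form directly), and the same two observations finish it: the entries $\zeta^{j_k}$ for $J\in\mathcal I_{n+1}$ are nonzero, and exclusivity of $J$ rules out $\zeta^{j_k}+\zeta^{j_l}=0$.

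The only genuine obstacle I anticipate is pinning down the exact factorization of $S_{\rho_n}$ in $n+1$ variables (part (2)), since the partition is ``short'' relative to the number of variables; the cleanest route is to write $S_{\rho_n}(x_1,\dots,x_{n+1})$ as the bialternant $\det(x_j^{\,\rho_{n,i}+n+1-i})/\det(x_j^{\,n+1-i})$, read off which power-differences appear, and argue directly that numerator and denominator are products of linear forms $x_i\pm x_j$ none of which vanishes at $\zeta^J$ when $J$ is exclusive — so the quotient is finite and nonzero. Everything else is the routine Vandermonde bookkeeping sketched above, which I would not grind through in the write-up.
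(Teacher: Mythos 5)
Your argument is correct, but it takes a genuinely different route from the paper. The paper proves the lemma through the quantum cohomology ring itself: by the orthogonality formula (Proposition \ref{p}) the value $S_{\rho_n}(\epsilon\zeta^I)$ is the value of the quantum Euler class $\dot e_q$ at the point $v(\epsilon\zeta^I)\in\mathcal V_n^\prime$, and then semisimplicity of $qH^*(OG(n))_{q=1}$ and $qH^*(LG(n))_{q=1}$ (Proposition \ref{semisimplicity}) together with Abrams' criterion (Proposition \ref{euler cri}) forces $\dot e_q$ to be invertible, hence nonvanishing at every point. You instead use the classical staircase factorization $S_{\rho_n}(x_1,\dots,x_n)=(x_1\cdots x_n)\prod_{i<j}(x_i+x_j)$, so that nonvanishing at $\zeta^I$ is exactly the conjunction of ``the entries are nonzero roots of unity'' and ``$I$ is exclusive''; this is airtight and entirely elementary. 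Your hedge in part (2) can be removed, and your tentative formula with an extra factor $e_n$ should be dropped: since $\rho_n$ padded by a zero is the full staircase in $n+1$ variables, the same bialternant computation gives simply $S_{\rho_n}(x_1,\dots,x_{n+1})=\prod_{1\le i<j\le n+1}(x_i+x_j)$, and exclusivity of $J\in\mathcal I_{n+1}$ kills every factor's chance to vanish; note also that your method covers with equal ease $S_{\rho_{n+1}}(x_1,\dots,x_{n+1})=(x_1\cdots x_{n+1})\prod_{i<j}(x_i+x_j)$, which is what the application in Theorem \ref{Main-Thm2} actually requires, and it does so for all $J\in\mathcal I_{n+1}$, not only $J\in\mathcal I_{n+1}^e$. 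In comparison, the paper's route costs two nontrivial external inputs (semisimplicity for minuscule/cominuscule $G/P$ and Abrams' theorem) but places the lemma in the conceptual frame of the quantum Euler class; your route is self-contained, sharper (it exhibits nonvanishing as equivalent to exclusivity), and, combined with the paper's evaluation of $\dot e_q$, would even give an independent check that the quantum Euler class is nowhere zero on $\mathcal V_n^\prime$.
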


\begin{proof}
 Denote by $\dot{e}_q$ the function on
$\mathcal{V}_n^\prime$ corresponding to the quantum Euler class
$e_q(OG(n))$. For $I\in \In$, evaluate $\dot{e}_q$ on the point
$v=v(\epsilon\zeta^I)$: From the definition of $e_q(OG(n))$ in $\S$
\ref{subsection-quan-euler}
 and $(1)$ of Proposition \ref{p}, we have
$$\dot{e}_q(v(\epsilon\zeta^I))=\sum_{\lambda \in \mathcal{D}(n)}\Pt_{\lambda}(\epsilon\zeta^I)\Pt_{
\hat{\lambda}}(\epsilon\zeta^J)=S_{\rho_n}(\epsilon \zeta^I).$$
Recall that $OG(n)$ is minuscule and hence the ring
$qH^*(OG(n)_{q=1}$ is semisimple (Proposition \ref{semisimplicity}).
Thus it follows from Proposition \ref{euler cri} that
$S_{\rho_n}(\epsilon \zeta^I)\ne 0,$ equivalently, $S_{\rho_n}(
\zeta^I)\ne 0$  for $I\in \In$, which proves $(1)$ of the lemma.
Similarly,  using the semisimplicity of $qH^*(LG(n))_{q=1}$ and
$(2)$ of Proposition \ref{p}, we get $S_{\rho_n}(\zeta^J)\ne 0$ for
$J\in \mathcal{I}_{n+1}.$  This proves the lemma.
\end{proof}

\begin{prelemma}\label{prelemma}
Let $\{v_0, v_1,....,v_m\}$ and $\{v_0^\prime,
v_1^\prime,...,v_m^\prime\}$ be the sets of nonzero vectors in the
(closed) upper half plane of the complex plane satisfying
\bn[label=(\roman*)]
\item $v_0=v_0^\prime$ and $v_0\in \R_{>0},$
\item $|v_k|=|v_k^\prime|$ for $k=1,...,n$,
\item
$0<\mathrm{arg}(v_1)<\cdots <\mathrm{arg}(v_m)\leq \pi$; and $0<
\mathrm{arg}(v_1^\prime)<\cdots <\mathrm{arg}(v_m^\prime)\leq \pi$,
\item $\mathrm{arg}(v_{k+1}^\prime)-\mathrm{arg}(v_k^\prime)\leq \mathrm{arg}(v_{k+1})-\mathrm{arg}(v_k)$
for all $k=0,...,m-1$. \en Then we have $$|\sum_{i=1}^m
v_i^\prime|\geq |\sum_{i=1}^m v_i|,
\hspace{0.1in}\mathrm{arg}(\sum_{i=1}^m v_i)\geq
\mathrm{arg}(\sum_{i=1}^m v_i^\prime),$$  and the strict
inequalities hold if   there is a $k$ with $1\leq k\leq m$ such that
$\mathrm{arg}(v_k)>\mathrm{arg}(v_k^\prime)$.
\end{prelemma}

\begin{proof}
Obvious from the definitions of the length and summation of vectors
in the complex plane.
\end{proof}

The following is more general than Prelemma \ref{prelemma}
\begin{prelemma}\label{prelemma:corollary}
Let $\{v_{-l},...,v_{-1},v_0, v_1,....,v_m\}$ and
$\{v_{-l}^\prime,...,v_{-1}^\prime ,v_0^\prime,
v_{1}^\prime,....,v_m^\prime\}$ be  the sets of distinct nonzero
vectors in the complex plane satisfying \bn[label=(\roman*)]
\item  $v_k$ and $v_k^\prime$ lie on  the (closed) upper half plane if $k=0, 1,...,m$, and  on the (open) lower half plane if $k=-1,...,-l.$
\item The vectors $v_k,$ and $v_k^\prime$ in the (closed) upper half plane satisfy the conditions  in Prelemma
\ref{prelemma}.
\item The vectors $v_k,$ and $v_k^\prime$ in the lower half plane satisfy the similar conditions;
   \bn
\item  $|v_k|=|v_k^\prime|$ for $k=-1,...,-l$,
\item $-\pi < \mathrm{arg}(v_{-l})<\cdots <\mathrm{arg}(v_{-1})<0$; and $-\pi < \mathrm{arg}(v_{-l}^\prime)<\cdots <\mathrm{arg}(v_{-1}^\prime)<0$,
\item $\mathrm{arg}(v_{k})-\mathrm{arg}(v_{k-1})\geq \mathrm{arg}(v_{k}^\prime)-\mathrm{arg}(v_{k-1}^\prime)$
for all $k=0,...,-1+1$.\en \en
 Then we have $|\sum_{k=-l}^m v_k|\leq
|\sum_{k=-l}^m v_k^\prime|,$
    and the strict inequality holds if there is a $k$  with $-l\leq k\leq m$ such that
    $|\mathrm{arg}(v_k)|>|\mathrm{arg}(v_k^\prime)|$.
\end{prelemma}

\begin{proof}
When $l\leq 1$ and $m \leq 1,$  it is trivial to check the prelemma.
Furthermore,  when  $m=1$ and $l=1$, i.e., when we work with the two
sets $\{v_{1},v_0,v_1\}$ and
$\{v_{1}^\prime,v_0^\prime,v_1^\prime\}$, the prelemma is true even
though we relax the condition $|v_k|=|v_k^\prime|$ into the
condition $|v_k|\leq |v_k^\prime|$ for $k=-1,1$. For general case,
 let $V_1:=\sum_{k=1}^m v_k$ (resp.  $V_1^\prime:=\sum_{k=1}^m v_k^\prime$)
  and $V_{-1}:=\sum_{k=1}^lv_{-k}$ (resp. $V_{-1}^\prime:=\sum_{k=1}^lv_{-k}^\prime$).
  Then, by Prelemma \ref{prelemma}, the vectors in the sets
  $\{V_{-1},v_0,V_1\}$ and  $\{V_{-1}^\prime,v_0,V_1^\prime\}$ satisfy all the
  conditions of the prelemma (for $m=1$ and $l=1$)
  except for the condition  $|V_{k}|= |V_{k}^\prime|$ for $k=-1,1$.
  Instead, they  satisfy the condition $|V_k|\leq |V_k^\prime|$ for $k=-1, 1.$
 Then, by the above special case, we have
 $|V_{-1}+v_0+V_1| \leq |V_{-1}^\prime+v_0^\prime+V_1^\prime|$,
 equivalently, $|\sum_{k=-l}^m v_k|\leq \sum_{k=-l}^m v_k^\prime|.$
\end{proof}

 Recall that the entries $\zeta^{i_1},...,\zeta^{i_n}$ of  $\zeta^I$ lie  on the unit circle.
 Therefore one can rotate the points $\zeta^{i_1},...,\zeta^{i_n}$ simultaneously
 by a certain angle $\theta$  so that the set of rotated points
 $\{e^{2\pi \theta}\zeta^{i_1},..., e^{2\pi \theta}\zeta^{i_n}\}$ equals
 the set  $\{\zeta^{j_1},...,\zeta^{j_n}\}$ for some $J=(j_1,...,j_n)\in\In.$ In this situation,
 we simply say that $\zeta^J$ is obtained by rotating $\zeta^I$ (by $\theta$).
 Similarly, one can flip the points $\zeta^{i_1},...,\zeta^{i_n}$ simultaneously
 with respect to a line $L$ passing through the origin, so that the set of flipped points
 $\{(\zeta^{i_1})^{\prime},...,(\zeta^{i_n})^\prime\}$ equals the set
 $\{\zeta^{j_1},...,\zeta^{j_n}\}$ for some $J=(j_1,...,j_n)\in\In.$
 In this situation, we simply say that $\zeta^J$ is obtained by flipping $\zeta^I$ (with respect to $L$).

 \begin{definition}
 \bn
 \item
 We say that for $I,J \in \In,$  $\zeta^I$ has the {\it same configuration}
 as $\zeta^J$  if  $\zeta^J$ is  obtained by rotating and (or) flipping $\zeta^I.$
 \item For  $J\in \In,$
 $\zeta^J$ is called $closed$ if
$j_{k+1}=j_k+1$ for $k=1,...,n-1.$ For example, $\zeta^{I_0}$ is
closed. \en
 \end{definition}

\begin{lemma}\label{eigenvalue}For the $n$-tuples $\zeta^I$ with $I\in
\In$, we have the following properties.

\bn
\item If $\zeta^I$ and $\zeta^J$ have the same configuration, then $|E_1(\zeta^I)|=|E_1(\zeta^J)|$.
\item
$E_1(\zeta^{I_0})$ is a positive real number which is equal to
$E_1(\zeta^{I_0})=\frac{1}{\mathrm{sin}(\pi/2n)}.$
\item  $\zeta^J$ is a closed $n$-tuple if and only if $|E_1(\zeta^{J})|$
is maximal among  $|E_1(\zeta^I)|$ with $I\in \In.$ In particular,
$|E_1(\zeta^{I_0})|=E_1(\zeta^{I_0})$ is maximal among
$|E_1(\zeta^I)|$ with $I\in \In. $
\item If $\zeta^I$ and $\zeta^J$ are closed $n$-tuples with $I\ne J \in
\mathcal{I}_{n}$, then we have $E_1(\zeta^I)=\eta E_1(\zeta^J)$  for
some $2n$-th root $\eta$ of unity with $\eta \ne 1,$ and, in
particular,  $E_1(\zeta^I)\ne E_1(\zeta^J)$. \en
\end{lemma}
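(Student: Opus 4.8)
The plan is to treat the four parts in order, using the fact that $E_1(\zeta^I)=\sum_{k=1}^n\zeta^{i_k}$ is a sum of $n$ distinct $2n$-th roots of $(-1)^{n+1}$, all lying on the unit circle, and exploiting the ``exclusivity'' of $I$: for $I\in\In$ the multiset $\{\zeta^{i_1},\dots,\zeta^{i_n}\}\cup\{-\zeta^{i_1},\dots,-\zeta^{i_n}\}$ is exactly the full set of $2n$-th roots of $(-1)^{n+1}$. Part~(1) is immediate: a rotation multiplies $E_1(\zeta^I)$ by a unit-modulus scalar $e^{2\pi i\theta}$, and a flip across a line through the origin sends $E_1$ to $e^{2i\alpha}\overline{E_1}$ for the appropriate $\alpha$; in either case $|E_1|$ is unchanged, so composing these operations preserves $|E_1|$. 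For part~(2), the $n$ roots in $\zeta^{I_0}$ are the $2n$-th roots of $(-1)^{n+1}$ whose arguments lie in a half-open arc of length $\pi$ symmetric about the real axis; summing this symmetric geometric-progression-like configuration gives a real positive value, and the closed-form $1/\sin(\pi/2n)$ comes out of the standard computation $\bigl|\sum_{j=0}^{n-1}e^{i(\phi+j\pi/n)}\bigr|=\sin(\pi/2)/\sin(\pi/2n)$ with $\phi$ chosen to center the arc; I would spell this out but it is routine.

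For part~(3), the direction ``closed $\Rightarrow$ maximal'' and the optimality claim are where the Prelemmas enter. Given any $I\in\In$, I would compare $\zeta^I$ to a closed $n$-tuple $\zeta^J$ having, roughly, the points as tightly bunched as possible. After rotating $\zeta^I$ so that its ``angular center of mass'' sits on the positive real axis (and, if needed, flipping so the configuration is arranged symmetrically), the entries of $\zeta^I$ and of a closed tuple can be matched up as two sets of unit vectors in the closed upper/lower half planes satisfying exactly the hypotheses (ii)--(iv) of Prelemma~\ref{prelemma:corollary}: same moduli ($=1$), increasing arguments, and the closed tuple has the \emph{minimal} consecutive angular gaps (all equal to $\pi/n$), so its gaps are $\leq$ those of $\zeta^I$. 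Prelemma~\ref{prelemma:corollary} then yields $|E_1(\zeta^I)|\leq|E_1(\zeta^{\text{closed}})|$, with equality forcing all angular gaps of $\zeta^I$ to equal $\pi/n$, i.e. $\zeta^I$ closed. Combined with part~(1), all closed tuples have the same $|E_1|$, and by part~(2) this common value is $E_1(\zeta^{I_0})=1/\sin(\pi/2n)$; this proves the ``only if'' as well. The main obstacle I anticipate is the bookkeeping needed to verify that, after the rotation/flip normalization, the entries of an arbitrary $\zeta^I$ really do pair off with those of a closed tuple so as to meet conditions (iii)(a)--(c) of Prelemma~\ref{prelemma:corollary} simultaneously in both half-planes — in particular handling entries that lie on the real axis and the parity issues in $(-1)^{n+1}$; this requires a careful but elementary case analysis on $n$ even/odd.

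For part~(4), let $\zeta^I$ and $\zeta^J$ be two distinct closed $n$-tuples. Since each is a run of $n$ consecutive $2n$-th roots of $(-1)^{n+1}$ (equivalently, $\zeta^{2I}=\zeta^{2I_0}$ by Lemma~\ref{Lemma:Exclusive}, so the entries are determined by a single ``starting angle''), passing from $\zeta^I$ to $\zeta^J$ amounts to rotating the whole run by a multiple of $\pi/n$, i.e. $\zeta^J=\eta\,\zeta^I$ with $\eta$ a $2n$-th root of unity; distinctness gives $\eta\neq1$. Hence $E_1(\zeta^I)=\eta E_1(\zeta^J)$. Since $E_1(\zeta^J)\ne0$ (this is Lemma~\ref{nonidentity}, or simply part~(3) together with part~(2) which show the modulus is the nonzero number $1/\sin(\pi/2n)$) and $\eta\ne1$, we conclude $E_1(\zeta^I)\ne E_1(\zeta^J)$, completing the proof.
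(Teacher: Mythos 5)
Your proposal is correct and follows essentially the same route as the paper: part (1) by invariance of modulus under rotation/flip, part (2) by the standard evaluation giving $1/\sin(\pi/2n)$ (the paper simply cites Rietsch for this value), part (3) by rotating to normalize and then applying Prelemma \ref{prelemma:corollary} to compare an arbitrary tuple with a closed one, and part (4) by observing that two closed tuples differ by multiplication by a $2n$-th root of unity $\eta\ne 1$. One tiny correction: Lemma \ref{nonidentity} concerns $S_{\rho_n}(\zeta^I)\ne 0$, not $E_1(\zeta^I)\ne 0$, so for the nonvanishing needed in part (4) you should rely on your alternative justification via parts (2)--(3), which is sufficient.
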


\begin{proof}
$(1)$ is obvious. For $(2)$, see Page $542$ of \cite{Riet1}. For
$(3)$, first note that for any $\theta\in \R,$ $|E_1(\zeta^J)|$ is a
maximal element of the set $\{|E_1(\zeta^{I})|\hspace{0.05in}
|\hspace{0.05in} I\in \mathcal{I}_n\}$ if and only if $|E_1(e^{2\pi
\textbf{i}\theta} \zeta^{J})|$ is a  maximal element of the set
$\{|E_1(e^{2\pi \textbf{i}\theta}\zeta^{I})| \hspace{0.05in}|
\hspace{0.05in} I\in \mathcal{I}_n\}$. Let $\zeta^J$ be a closed
$n$-tuple with $J=(j_1,...,j_n)$. Then, fix a component
$\zeta^{j_k}$ of the $n$-tuple $\zeta^{J}$ and take $\theta\in \R$
so that the vector $v_0:=e^{2\pi \textbf{i}\theta}\zeta^{j_k}$ lies
on the positive real axis. Now we consider the set  $\{|E_1(e^{2\pi
\textbf{i}\theta}\zeta^{I})| \hspace{0.05in}| \hspace{0.05in} I\in
\mathcal{I}_n\}$. Then by Prelemma \ref{prelemma:corollary},
$|E_1(e^{2\pi \textbf{i}\theta}\zeta^{J})$ is a maximal element of
the set $\{|E_1(e^{2\pi \textbf{i}\theta}\zeta^{I})|
\hspace{0.05in}| \hspace{0.05in} I\in \mathcal{I}_n\}$, and hence
$|E_1(\zeta^J)|$ is a maximal element of the set
$\{|E_1(\zeta^{I})|\hspace{0.05in} |\hspace{0.05in} I\in
\mathcal{I}_n\}$. The converse is immediate from Prelemma
\ref{prelemma:corollary}. For $(4),$ note that $\zeta^I$ and
$\zeta^J$ are closed $n$-tuples with $I\ne J,$ then there is a
$2n$-th root $\eta\ne 1$ of unity such that $\zeta^I=\eta \zeta^J$,
and hence $E_1(\zeta^I)=\eta E_1(\zeta^J).$
\end{proof}

\begin{remark}
Note that the converse of $(1)$ is not true in general. Indeed, it
is not difficult to find  $I,J\in \In$ such that $\zeta^I$ and
$\zeta^J$ do not have the same configuration, and
$|E_1(\zeta^I)|=|E_1(\zeta^J)|$. However, $(3)$ implies that with
the maximality condition on the modulus, this can not happen.
\end{remark}

We remark that the ring $qH^*(OG(n),\C)_{q=1}$ (resp.
$qH^*(LG(n),\C)_{q=1}$) is a $2^n$ dimensional complex vector space
with the Schubert basis $\{\tau_{\lambda}
\hspace{0.04in}|\hspace{0.04in} \lambda\in \Dn\}$ (resp.
$\{\sigma_{\lambda} \hspace{0.04in}|\hspace{0.04in} \lambda\in
\Dn\}$). Now we use the orthogonality formulas in Proposition
\ref{p} to find another basis for each of these vector spaces which,
in fact, turns out to be a simultaneous eigenbasis for all
multiplication operators $[F]$. The original idea for finding this
eigenbasis by using the orthogonality formula is due to Rietsch.
Indeed, Rietsch obtained an eigenbasis of the quantum cohomology
ring of the Grassmannian which Theorems \ref{Main-Thm1} and
\ref{Main-Thm2} below are modeled on (Page $551$ of \cite{Riet1}).

\begin{theorem}\label{Main-Thm1}
For each $I\in \In,$ let $\tau_I=\sum_{\nu \in
\mathcal{D}(n)}\Pt_\nu(\epsilon\zeta^I))\tau_{\hat{\nu}}.$ Then for
each $\lambda \in \mathcal{D}(n)$,
 the quantum multiplication operator $[\tau_\lambda]$ on  $qH^*(OG(n),\C)_{q=1}$ has
 eigenvectors $\tau_I$ with eigenvalues $\Pt_\lambda(\epsilon\zeta^I).$ In particular,
 $\{\tau_I\hspace{0.03in}|\hspace{0.03in}I \in \mathcal{I}_n\}$
 forms
a simultaneous eigenbasis of the vector space $qH^*(OG(n),\C)_{q=1}$
for the operators $[F]$ with  $F \in qH^*(\og,\C)_{q=1}$. Here
$\epsilon=(4)^{\frac{1}{2n}}$ as before.
\end{theorem}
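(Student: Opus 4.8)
The plan is to exploit the orthogonality formula in part (1) of Proposition \ref{p} together with the ring isomorphism $qH^*(OG(n),\C) \cong \mathcal{O}(\mathcal{V}_n)$ of Theorem \ref{ppppp}, specialized along $\dot{q}=1$. First I would translate the problem from the quantum cohomology ring to the coordinate ring $\mathcal{O}(\mathcal{V}_n^\prime)$: by Corollary \ref{cor:expression}, the points of $\mathcal{V}_n^\prime$ are exactly the $v(\epsilon\zeta^I)$ for $I\in \mathcal{I}_n$, and by the Remark following Proposition \ref{pp} the function $\dot{\tau}_\lambda$ evaluates at $v(\epsilon\zeta^I)$ to $\Pt_\lambda(\epsilon\zeta^I)$. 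Since $\mathcal{V}_n^\prime$ is a finite set of $2^n = |\mathcal{D}(n)|$ reduced points, $\mathcal{O}(\mathcal{V}_n^\prime)$ is a product of $2^n$ copies of $\C$, and the simultaneous eigenvectors of all multiplication operators are precisely the idempotents supported at the individual points. Thus the substance of the theorem is the assertion that $\tau_I$ is (a nonzero multiple of) the point-evaluation functional at $v(\epsilon\zeta^I)$.

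Next I would verify this directly. For $\tau_I = \sum_{\nu\in\mathcal{D}(n)}\Pt_\nu(\epsilon\zeta^I)\,\tau_{\hat\nu}$, evaluate the corresponding function $\dot{\tau}_I$ at a point $v(\epsilon\zeta^J)$ with $J\in\mathcal{I}_n$: this gives $\sum_{\nu}\Pt_\nu(\epsilon\zeta^I)\Pt_{\hat\nu}(\epsilon\zeta^J)$, which by part (1) of Proposition \ref{p} equals $\delta_{I,J}\,S_{\rho_n}(\epsilon\zeta^I)$. By Lemma \ref{nonidentity}(1), $S_{\rho_n}(\epsilon\zeta^I)\neq 0$, so $\dot{\tau}_I$ vanishes at every point of $\mathcal{V}_n^\prime$ except $v(\epsilon\zeta^I)$, where it takes a nonzero value; hence $\tau_I$ is a nonzero scalar multiple of the idempotent at that point. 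Consequently the $\tau_I$, $I\in\mathcal{I}_n$, are nonzero, pairwise "orthogonal" in the sense of being supported at distinct points, and there are $2^n$ of them, so they form a basis. For any $F\in qH^*(OG(n),\C)_{q=1}$ with corresponding function $\dot F$, we have $[F]\tau_I = F\cdot\tau_I$, whose associated function is $\dot F\cdot\dot{\tau}_I$; since $\dot{\tau}_I$ is supported only at $v(\epsilon\zeta^I)$, this equals $\dot F(v(\epsilon\zeta^I))\,\dot{\tau}_I$, i.e. $[F]\tau_I = \dot F(v(\epsilon\zeta^I))\,\tau_I$. Taking $F=\tau_\lambda$ gives eigenvalue $\dot{\tau}_\lambda(v(\epsilon\zeta^I)) = \Pt_\lambda(\epsilon\zeta^I)$, as claimed.

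There is one bookkeeping point worth isolating: I should make sure that $\mathcal{V}_n^\prime$ really is a reduced scheme of exactly $2^n$ points, so that $\mathcal{O}(\mathcal{V}_n^\prime)\cong \C^{2^n}$ and the idempotent description of simultaneous eigenvectors is legitimate. This follows because $qH^*(OG(n),\C)_{q=1}$ is semisimple (Proposition \ref{semisimplicity}, as $OG(n)$ is minuscule), hence isomorphic to a product of fields, which forces $\mathcal{V}_n^\prime = \mathrm{Spec}$ of that ring to consist of $\dim_\C = 2^n$ distinct reduced points — and Corollary \ref{cor:expression} then identifies those points with the $v(\epsilon\zeta^I)$. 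Alternatively one can bypass the geometry entirely: the computation above shows $[\tau_\lambda]\tau_I = \Pt_\lambda(\epsilon\zeta^I)\tau_I$ once we know the $\tau_I$ are nonzero, and linear independence of the $2^n$ vectors $\tau_I$ follows from the orthogonality relation $\langle$ evaluate $\dot{\tau}_I$ at $v(\epsilon\zeta^J)\rangle = \delta_{I,J}S_{\rho_n}(\epsilon\zeta^I)$ together with $S_{\rho_n}(\epsilon\zeta^I)\neq 0$.

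The main obstacle is not conceptual but is the verification that the change of basis is genuinely invertible, i.e. that the $\tau_I$ span — and this is exactly where Lemma \ref{nonidentity} (equivalently, the semisimplicity input via the quantum Euler class) is doing the real work; without the non-vanishing of $S_{\rho_n}(\epsilon\zeta^I)$ the "eigenvectors" $\tau_I$ could collapse. Everything else is a direct substitution into Proposition \ref{p}(1) and the Remark after Proposition \ref{pp}. I would present the proof in exactly this order: (i) recall $\mathcal{V}_n^\prime = \{v(\epsilon\zeta^I) : I\in\mathcal{I}_n\}$ and the evaluation $\dot{\tau}_\lambda(v(\epsilon\zeta^I)) = \Pt_\lambda(\epsilon\zeta^I)$; (ii) compute $\dot{\tau}_I(v(\epsilon\zeta^J)) = \delta_{I,J}S_{\rho_n}(\epsilon\zeta^I)$ using Proposition \ref{p}(1); (iii) invoke Lemma \ref{nonidentity}(1) to conclude $\tau_I\neq 0$ and the $\tau_I$ are linearly independent, hence a basis of the $2^n$-dimensional space; (iv) conclude $[F]\tau_I = \dot F(v(\epsilon\zeta^I))\tau_I$ for all $F$, and specialize $F = \tau_\lambda$.
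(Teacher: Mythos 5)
Your proof is correct and follows essentially the same route as the paper: evaluate $\dot{\tau}_I$ at the points $v(\epsilon\zeta^J)$ of $\mathcal{V}_n^\prime$, apply the orthogonality formula of Proposition \ref{p}(1) together with the nonvanishing of $S_{\rho_n}(\epsilon\zeta^I)$ from Lemma \ref{nonidentity}, and deduce both the eigenvector equations and the linear independence. Your extra remark that semisimplicity forces $\mathcal{O}(\mathcal{V}_n^\prime)\cong\C^{2^n}$ (so equality of point evaluations implies equality in the ring) only makes explicit a step the paper leaves implicit; otherwise the argument is the same.
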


\begin{proof}
  First note that $\tau_I$ is a nonzero vector for all $I\in \In$  by Lemma
  \ref{nonidentity} and $(1)$ of Proposition \ref{p}.
Evaluating the function $\dot{\tau}_\lambda\cdot \dot{\tau}_I$ on
the points
 $v(\epsilon\zeta^J)$ for $J\in \In$ and using $(1)$ of Proposition \ref{p},
 we obtain the equality of
 functions
 on $\mathcal{V}_n^\prime$
 \be \label{tau-operator} \dot{\tau}_\lambda\cdot \dot{\tau}_I
 =\Pt_\lambda(\epsilon\zeta^I)\dot{\tau}_I.\ee
 Then the first part of the theorem is obvious since
 $qH^*(OG(n))_{q=1}$ is identified with $\mathcal{O}(\mathcal{V}^\prime_n).$
 Since (\ref{tau-operator}) holds for all Schubert basis elements $\tau_\lambda$,
 the vector $\tau_I$ is a simultaneous eigenvector for all operators $[F]$
 with an eigenvalue $\dot{F}(v(\epsilon\zeta^I))$ for $F\in qH^*(OG(n),\C)_{q=1}.$
 Now we show
that $\{\tau_I\hspace{0.03in}|\hspace{0.03in}I \in \mathcal{I}_n\}$
forms a simultaneous eigenbasis for $ qH^*(OG(n),\C)_{q=1}.$ Since
$|\In|=|\mathcal{D}|$,  it suffices to show that the vectors
$\tau_I,$ $I\in \In,$ are linearly independent. So suppose that
$$\sum_{I\in \In}a_I \tau_I=0.$$ Now let us evaluate the function
$\dot{\tau}:=\sum_{I\in \In}a_I \dot{\tau}_I$ on the points
$v(\epsilon\zeta^J)$ with $J\in \mathcal{I}_n$. Then, by $(1)$ of
Proposition \ref{p}, we have the evaluation
$$\dot{\tau}(v(\epsilon\zeta^J))=a_J\dot{S}_{\rho_n}(v(\epsilon\zeta^J))=
a_JS_{\rho_n}(\epsilon\zeta^J).$$ Since
$S_{\rho_n}(\epsilon\zeta^J)$ is nonzero  for any $J\in \In$ by
Lemma \ref{nonidentity}, $a_J=0$ for each $J\in \In.$ Therefore the
vectors $\tau_I$, $I \in \mathcal{I}_n$, are linearly independent.
This completes the proof.
\end{proof}

\begin{theorem}\label{Main-Thm2}
For each $I\in \mathcal{I}_{n+1}^e,$ let $\sigma_I=\sum_{\nu \in
\mathcal{D}(n)}\Qt_\nu(\delta\zeta^I))\sigma_{\hat{\nu}}.$ Then for
each $\lambda \in \mathcal{D}(n)$,
 the quantum multiplication operator $[\sigma_\lambda]$ on
 $qH^*(\lg,\C)_{q=1}$ has  eigenvectors $\sigma_I$ with eigenvalues
 $\Qt_\lambda(\delta\zeta^I).$ In particular,
  $\{\sigma_I \hspace{0.04in}|\hspace{0.04in}I \in \mathcal{I}^e_{n+1}\}$
 forms a simultaneous eigenbasis of the vector space $qH^*(\lg,\C)_{q=1}$
for the operators $[F]$ with  $F \in qH^*(\lg,\C)_{q=1}$. Here
$\delta=(\frac{1}{2})^{\frac{1}{n+1}}$ as before.
\end{theorem}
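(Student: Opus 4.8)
The plan is to mirror the proof of Theorem \ref{Main-Thm1} almost verbatim, using the $\Qt$-orthogonality formula $(2)$ of Proposition \ref{p} in place of the $\Pt$-formula $(1)$, and Corollary on $\mathcal{W}_n^\prime$ in place of Corollary \ref{cor:expression}. First I would observe that by the Corollary describing $\mathcal{W}_n^\prime$, the points of the Peterson variety $\mathcal{W}_n^\prime$ (cut out by $\dot{q}=1$) are exactly the matrices $w(\delta\zeta^I)$ for $I\in \mathcal{I}_{n+1}^e$, and under the identification $qH^*(LG(n),\C)_{q=1}\stackrel{\sim}{=}\mathcal{O}(\mathcal{W}_n^\prime)$ of Theorem \ref{ppppp}(2), the Schubert function $\dot{\sigma}_\lambda$ evaluates on $w(t\zeta^I)$ to $\Qt_\lambda(t\zeta^I)$ (as recorded in the Remark after Proposition \ref{pp}). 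I would then note $\sigma_I\neq 0$ for every $I\in\mathcal{I}_{n+1}^e$: indeed $\dot{\sigma}_I$ evaluated at $w(\delta\zeta^I)$ gives, by $(2)$ of Proposition \ref{p} with $t=\delta$ and the normalization $\delta^{n+1}=\tfrac12$, a nonzero multiple of $S_{\rho_{n+1}}(\delta\zeta^I)$, which is nonzero by $(2)$ of Lemma \ref{nonidentity}.

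Next, the eigenvector/eigenvalue claim. Evaluating the product function $\dot{\sigma}_\lambda\cdot\dot{\sigma}_I$ on an arbitrary point $w(\delta\zeta^J)$ of $\mathcal{W}_n^\prime$ ($J\in\mathcal{I}_{n+1}^e$), I would use the multiplicativity of evaluation together with $(2)$ of Proposition \ref{p}:
\[
\dot{\sigma}_\lambda(w(\delta\zeta^J))\,\dot{\sigma}_I(w(\delta\zeta^J))
=\Qt_\lambda(\delta\zeta^J)\sum_{\nu\in\mathcal{D}(n)}\Qt_\nu(\delta\zeta^I)\Qt_{\hat\nu}(\delta\zeta^J)
=\delta_{I,J}\,\Qt_\lambda(\delta\zeta^I)\cdot 2^n\delta^{-(n+1)}S_{\rho_{n+1}}(\delta\zeta^I).
\]
Comparing with the evaluation of $\Qt_\lambda(\delta\zeta^I)\dot{\sigma}_I$ at the same point gives the identity of functions on $\mathcal{W}_n^\prime$
\[
\dot{\sigma}_\lambda\cdot\dot{\sigma}_I=\Qt_\lambda(\delta\zeta^I)\,\dot{\sigma}_I,
\]
which, transported back through the isomorphism of Theorem \ref{ppppp}(2), says exactly that $[\sigma_\lambda]\sigma_I=\Qt_\lambda(\delta\zeta^I)\sigma_I$. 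Since this holds for every Schubert generator $\sigma_\lambda$ and these generate $qH^*(LG(n),\C)_{q=1}$ as a ring, $\sigma_I$ is a simultaneous eigenvector of every $[F]$, with eigenvalue $\dot{F}(w(\delta\zeta^I))$.

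Finally, for the eigenbasis claim I would count dimensions and prove linear independence. The Remark after the definition of $\mathcal{I}_n$ shows $|\mathcal{I}_{n+1}^e|=2^n=\dim_{\C}qH^*(LG(n),\C)_{q=1}$, so it suffices to prove the $\sigma_I$ are linearly independent. Suppose $\sum_{I}a_I\sigma_I=0$; evaluate the corresponding function $\sum_I a_I\dot{\sigma}_I$ at $w(\delta\zeta^J)$ and apply $(2)$ of Proposition \ref{p} to get $a_J\cdot 2^n\delta^{-(n+1)}S_{\rho_{n+1}}(\delta\zeta^J)=0$; since $S_{\rho_{n+1}}(\delta\zeta^J)\neq 0$ by Lemma \ref{nonidentity}(2), all $a_J=0$. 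I do not anticipate a real obstacle; the only points requiring care are bookkeeping the constant $2^n\delta^{-(n+1)}=2^n\cdot 2^{1/(n+1)}\cdot\text{(power of }\delta)$ — more precisely using $\delta^{n+1}=\tfrac12$ so that $t^{n+1}=\delta^{n+1}=\tfrac12$ in formula $(\ref{formula})$ — and confirming that the index set $\mathcal{I}_{n+1}^e$ (rather than all of $\mathcal{I}_{n+1}$) is the correct one, which is forced by the Corollary characterizing $\mathcal{W}_n^\prime$.
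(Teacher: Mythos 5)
Your proposal is correct and follows essentially the same route as the paper: the paper's own proof simply invokes $(2)$ of Proposition \ref{p} and says the eigenvector identity and the linear-independence argument go through ``as in the case of $\og$,'' which is exactly the transcription you carry out (with the constant $2^n\delta^{-(n+1)}=2^{n+1}$ tracked explicitly). No gap to report.
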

\begin{proof}
From $(2)$ of Proposition \ref{p}, we have $$\sum_{\lambda \in
\mathcal{D}(n)}\Qt_{\lambda}(\delta\zeta^I)\Qt_{
\hat{\lambda}}(\delta\zeta^J)=\delta_{I,J}2^{n}S_{\rho_{n+1}}(\delta\zeta^I)
\hspace{0.08in} \mathrm{for} \hspace{0.04in}I\in
\mathcal{I}_{n+1}^e,$$ from which we get eigenvectors $\sigma_I$ and
corresponding eigenvalues $\Qt_{\lambda}(\delta\zeta^I)$ \be
\label{sigma-operator} \sigma_\lambda\cdot
\sigma_I=\Qt_\lambda(\delta\zeta^I)\sigma_I,\ee as in the case of
$\og.$ The proof of linear independence of vectors $\sigma_I,$ $I\in
\mathcal{I}_{n+1}^e,$ is the same as in the case of $\og.$
\end{proof}

 For a complex manifold, the $i$-th Chern class $c_i(M)$ is defined
to be the $i$-th Chern class $c_i(TM)$ of the tangent bundle $TM.$

\begin{lemma}( \cite{FW}, Lemma $3.5$)\label{first-chern}  The first Chern classes of $\og$ and $\lg$, respectively, are given by
$$ c_1(\og)=2n\tau_1\hspace{0.2in}\textrm{and}\hspace{0.2in}
c_1(\lg)=(n+1)\sigma_1.$$
\end{lemma}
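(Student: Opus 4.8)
The plan is to identify the holomorphic tangent bundle of each Grassmannian in terms of the tautological sub- and quotient bundles, and then to extract $c_1$ by the splitting principle. First I would treat $LG(n)$. With the tautological sequence $0\to S\to\mathcal{E}\to Q\to 0$, where $\mathcal{E}$ is trivial of rank $2n$ and $\sigma_1=c_1(Q)$, the symplectic form identifies $\mathcal{E}/S$ with $S^{\vee}$, and a first-order deformation of a Lagrangian plane $W$ preserving isotropy is exactly a symmetric bilinear form on $W$; hence $T\,LG(n)\cong \mathrm{Sym}^2 S^{\vee}$. For a rank-$n$ bundle $F$ the splitting principle gives $c_1(\mathrm{Sym}^2 F)=(n+1)\,c_1(F)$, so $c_1(LG(n))=(n+1)\,c_1(S^{\vee})$; and since $\det\mathcal{E}$ is trivial, $c_1(S^{\vee})=c_1(Q)=\sigma_1$, whence $c_1(LG(n))=(n+1)\sigma_1$.

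For $OG(n)=OG(n,2n+1)$, let $S$ be the rank-$n$ tautological isotropic subbundle with fiber $W$ over $[W]$, and let $L$ be the line bundle with fiber $W^{\perp}/W$. Restricting the symmetric form $Q$ to $W^{\perp}$ induces a nondegenerate symmetric pairing on $W^{\perp}/W$, so $L^{\otimes 2}\cong\mathcal{O}$ and $c_1(L)=0$ in $H^2(OG(n),\C)$. Analyzing isotropic first-order deformations of $W$ as above yields a short exact sequence
\[
0\longrightarrow S^{\vee}\otimes L\longrightarrow T\,OG(n)\longrightarrow \wedge^{2}S^{\vee}\longrightarrow 0,
\]
the subbundle recording the deformations valued in $\mathrm{Hom}(W,W^{\perp}/W)$ and the quotient recording the alternating forms on $W$ that the isotropy condition imposes. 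Taking first Chern classes, and using $c_1(L)=0$ together with $c_1(S^{\vee}\otimes L)=c_1(S^{\vee})$ and $c_1(\wedge^2 S^{\vee})=(n-1)\,c_1(S^{\vee})$, I obtain $c_1(OG(n))=n\,c_1(S^{\vee})$.

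It remains to express $c_1(S^{\vee})$ through the Schubert divisor $\tau_1$, the ample generator of $\mathrm{Pic}(OG(n))\cong\Z$, and this normalization is the one genuinely delicate point. Unlike the Lagrangian case, $\det S^{\vee}$ is not itself the ample generator: on the maximal odd orthogonal Grassmannian $\mathrm{Pic}$ is generated by the spinor line bundle $\mathcal{L}_0$, with $\mathcal{L}_0^{\otimes 2}\cong \det S^{\vee}$ (equivalently, under the projective equivalence $OG(n)\cong OG(n+1,2n+2)$ the Pl\"{u}cker line bundle is the square of the $\mathcal{O}(1)$ of the spinor embedding). Hence $\tau_1=c_1(\mathcal{L}_0)$ and $c_1(S^{\vee})=2\tau_1$, so $c_1(OG(n))=2n\,\tau_1$. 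I would establish $c_1(S^{\vee})=2\tau_1$ by invoking the standard description of $\mathrm{Pic}(OG(n))$ via the half-spin representation and the spinor embedding $OG(n)\hookrightarrow\P(V_{\omega_n})$; alternatively it can be checked by an intersection-number computation, restricting $S$ to a minimal rational curve $\ell\subset OG(n)$ (for instance the conic of isotropic lines in $U^{\perp}/U\cong\C^3$ attached to a fixed isotropic $(n-1)$-plane $U$) and finding $S|_{\ell}\cong\mathcal{O}^{\oplus(n-1)}\oplus\mathcal{O}(-2)$, whence $\int_{\ell}c_1(S^{\vee})=2$ while $\int_{\ell}\tau_1=1$. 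As a consistency check, both formulas agree with the degrees $n+1$ and $2n$ assigned to the quantum parameter $q$ in Theorems \ref{quantum coho:lg} and \ref{quantum coho:oge}, since for a maximal parabolic $P$ the degree of $q$ in $qH^*(G/P)$ equals the coefficient of the Schubert divisor in $c_1(G/P)$.
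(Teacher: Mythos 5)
Your argument is correct, but it is genuinely different in character from what the paper does: the paper offers no proof at all and simply quotes Lemma 3.5 of Fulton--Woodward, where $c_1(G/P)$ is computed Lie-theoretically (as a sum over the roots in the nilradical of $\mathfrak{p}$, expressed in the Schubert divisor), uniformly for all $G/P$. You instead identify the tangent bundles concretely: $T\,LG(n)\cong \mathrm{Sym}^2S^{\vee}$ gives $c_1=(n+1)c_1(S^{\vee})$ and, since $\sigma_1=c_1(Q)=c_1(S^{\vee})$ by the paper's own convention, the Lagrangian case is immediate; for $OG(n)\subset OG(n,2n+1)$ your extension $0\to S^{\vee}\otimes L\to T\,OG(n)\to \wedge^2S^{\vee}\to 0$ with $L^{\otimes 2}\cong\mathcal{O}$ is the right description (the dimensions $n+\binom{n}{2}=n(n+1)/2$ confirm it), yielding $c_1=n\,c_1(S^{\vee})$. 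You correctly isolate the one delicate point, the normalization $c_1(S^{\vee})=2\tau_1$, and both of your justifications (the spinor generator of $\mathrm{Pic}$ with $\mathcal{L}_0^{\otimes 2}\cong\det S^{\vee}$, or restriction to the Schubert conic through a fixed isotropic $(n-1)$-plane, where $S|_{\ell}\cong\mathcal{O}^{\oplus(n-1)}\oplus\mathcal{O}(-2)$ and $\int_{\ell}\tau_1=1$) are standard and sound; this matches the Kresch--Tamvakis relation $c_i(Q)=2\tau_i$. What the citation route buys is uniformity and brevity; what your route buys is a self-contained, geometric explanation of the Fano indices $n+1$ and $2n$, consistent, as you note, with the degrees of $q$ in Theorems \ref{quantum coho:lg} and \ref{quantum coho:oge}.
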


  For $I\in \In$, let $$f(I):=n \epsilon E_1(\zeta^I)=2\dot{\tau}_1(v(\epsilon\zeta^I)).$$ Then since $c_1(OG(n))=2n\tau_1,$  by Theorems
\ref{Main-Thm1}, $f(I)$ is the eigenvalue of the operator
$[c_1(OG(n))]$ corresponding to the eigenbasis $\tau_I$. Therefore,
we have
$$\mathrm{Spec}\hspace{0.03in} ([ c_1(OG(n))])=\{\hspace{0.04in}f(I)\hspace{0.05in}|
\hspace{0.05in}I\in \In\}.$$

Similarly, for $I\in \mathcal{I}_{n+1}^e$, letting
$$g(I):=(n+1)\delta E_1(\zeta^I)=(n+1)\dot{\sigma}_1(w(\delta
\zeta^I)),$$  we have  $$ \mathrm{Spec}\hspace{0.03in}
([c_1(LG(n))])= \{\hspace{0.04in}g(I)\hspace{0.05in}|\hspace{0.05in}
I\in \mathcal{I}^e_{n+1}\}.$$

\begin{lemma}\label{lemma:multi}For $I\in \In$ for $OG(n)$ (resp. $I\in
\mathcal{I}_{n+1}^e$ for $LG(n)$), the multiplicity of the
eigenvalue $f(I)$  (resp. $g(I)$) is equal to the cardinality of the
set $\{ J\in \mathcal{I}_n \hspace{0.06in}|
\hspace{0.06in}f(J)=f(I)\}$ ( resp. $\{ J\in \mathcal{I}_{n+1}^e
\hspace{0.06in}| \hspace{0.06in} g(J)=g(I)\}$). In particular, if an
eigenvalue $f(I)$ (resp. $g(I)$) has a maximal modulus among the
eigenvalues for $OG(n)$ (resp. $LG(n) $), then $f(I)$ (resp. $g(I)$)
is a simple eigenvalue.
\end{lemma}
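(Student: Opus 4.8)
My plan is to read off both assertions from the explicit simultaneous eigenbases constructed in Theorems \ref{Main-Thm1} and \ref{Main-Thm2}, together with the modulus analysis of the scalars $E_1(\zeta^I)$ in Lemma \ref{eigenvalue}. For the multiplicity equality, recall from Lemma \ref{first-chern} that $[c_1(OG(n))] = 2n\,[\tau_1]$ on $qH^*(OG(n),\C)_{q=1}$ and that, by Theorem \ref{Main-Thm1}, $\{\tau_I : I \in \mathcal{I}_n\}$ is a basis of this $2^n$-dimensional space with $[c_1(OG(n))]\,\tau_I = f(I)\,\tau_I$, where $f(I) = n\epsilon\, E_1(\zeta^I)$ is exactly the eigenvalue attached to $\tau_I$. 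Consequently $[c_1(OG(n))]$ is diagonalizable, and expanding an arbitrary vector in the basis $\{\tau_I\}$ shows that its $\mu$-eigenspace is precisely the span of $\{\tau_I : f(I) = \mu\}$; thus the geometric multiplicity of $\mu$, which for a diagonalizable operator equals the algebraic multiplicity, is $|\{I \in \mathcal{I}_n : f(I) = \mu\}|$. Taking $\mu = f(I)$ gives the first claim for $OG(n)$, and the same reasoning with $[c_1(LG(n))] = (n+1)\,[\sigma_1]$, Theorem \ref{Main-Thm2}, and the eigenbasis $\{\sigma_I : I \in \mathcal{I}_{n+1}^e\}$ (eigenvalues $g(I) = (n+1)\delta\, E_1(\zeta^I)$) gives it for $LG(n)$.

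For the \emph{in particular} statement I would argue as follows for $OG(n)$. Suppose $|f(I)|$ is maximal among the eigenvalues. Since $f(I) = n\epsilon\,E_1(\zeta^I)$ and $n\epsilon > 0$ is a fixed real number, $|E_1(\zeta^I)|$ is maximal over all $J \in \mathcal{I}_n$, hence $\zeta^I$ is a closed $n$-tuple by Lemma \ref{eigenvalue}(3). Now if $f(J) = f(I)$ for some $J \in \mathcal{I}_n$, then $E_1(\zeta^J) = E_1(\zeta^I)$, so $|E_1(\zeta^J)|$ is likewise maximal and $\zeta^J$ is closed; Lemma \ref{eigenvalue}(4) then forces $J = I$. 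Hence $\{J \in \mathcal{I}_n : f(J) = f(I)\}$ is a singleton, and by the first part $f(I)$ is a simple eigenvalue.

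The step I expect to require the most care is transplanting this argument to $LG(n)$, where the eigenvalues are indexed by the proper subset $\mathcal{I}_{n+1}^e \subset \mathcal{I}_{n+1}$: a priori the maximum of $|E_1|$ over $\mathcal{I}_{n+1}^e$ could be strictly smaller than the maximum over all of $\mathcal{I}_{n+1}$, in which case it need not be attained at a closed tuple and the appeal to Lemma \ref{eigenvalue}(4) would fail. To close this gap I would observe that $I_0 = (-\tfrac{n}{2}, -\tfrac{n}{2}+1, \dots, \tfrac{n}{2})$, an element of $\mathcal{I}_{n+1}$ whose entries sum to zero, satisfies $E_{n+1}(\zeta^{I_0}) = \zeta^0 = 1$ and hence lies in $\mathcal{I}_{n+1}^e$, while $\zeta^{I_0}$ is a closed $(n+1)$-tuple. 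Since Lemma \ref{eigenvalue} holds with $n$ replaced by $n+1$ (the proofs are unchanged), its part (3) shows that closed tuples realize the maximal value of $|E_1|$ over all of $\mathcal{I}_{n+1}$; therefore this maximum is already attained inside $\mathcal{I}_{n+1}^e$, and the argument of the previous paragraph applies verbatim to give simplicity of any maximal-modulus eigenvalue $g(I)$ of $[c_1(LG(n))]$.
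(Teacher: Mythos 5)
Your proposal is correct and follows essentially the same route as the paper, which simply observes that the operators are diagonalized by the eigenbases of Theorems \ref{Main-Thm1} and \ref{Main-Thm2} (so multiplicities count indices $J$ with equal eigenvalue) and then invokes parts $(3)$, $(4)$ of Lemma \ref{eigenvalue} for simplicity at maximal modulus. Your extra step for $LG(n)$ --- checking that the closed tuple $\zeta^{I_0}$ already lies in $\mathcal{I}_{n+1}^e$ because $E_{n+1}(\zeta^{I_0})=\zeta^{\sum i_k}=1$, so the maximum of $|E_1|$ over $\mathcal{I}_{n+1}^e$ agrees with the maximum over $\mathcal{I}_{n+1}$ --- is exactly the point the paper leaves implicit (it is used again in the proof of Theorem \ref{Coro:eigenvalue}), and you resolve it correctly.
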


\begin{proof}
The first statement is obvious, and the second is immediate from
$(3), (4)$ of Lemma \ref{eigenvalue}.
\end{proof}

\begin{theorem}\label{Coro:eigenvalue}
 $\og$ and $\lg$ satisfy Conjecture $\mathcal{O}$.
\end{theorem}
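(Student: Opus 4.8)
The plan is to verify the three conditions of Conjecture $\mathcal{O}$ directly, using the explicit description of $\mathrm{Spec}([c_1(M)])$ obtained in the preceding lemmas. Recall that for $OG(n)$ we have $\mathrm{Spec}([c_1(OG(n))]) = \{f(I) \mid I \in \mathcal{I}_n\}$ with $f(I) = n\epsilon E_1(\zeta^I)$, and for $LG(n)$ we have $\mathrm{Spec}([c_1(LG(n))]) = \{g(I) \mid I \in \mathcal{I}_{n+1}^e\}$ with $g(I) = (n+1)\delta E_1(\zeta^I)$. Since the two cases are entirely parallel, I would treat $OG(n)$ in detail and remark that $LG(n)$ is identical with the obvious substitutions (the only care needed is that $I$ ranges over $\mathcal{I}_{n+1}^e$ rather than $\mathcal{I}_{n+1}$, but the relevant parts of Lemma \ref{eigenvalue} apply to closed tuples regardless).

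First I would set $T_0 = \max\{|f(I)| \mid I \in \mathcal{I}_n\}$. By $(3)$ of Lemma \ref{eigenvalue}, the maximum of $|E_1(\zeta^I)|$ over $I \in \mathcal{I}_n$ is attained, and it is attained precisely at the closed $n$-tuples; moreover by $(2)$ and $(3)$, $|E_1(\zeta^{I_0})| = E_1(\zeta^{I_0}) = 1/\sin(\pi/2n) \in \R_{>0}$. Hence $f(I_0) = n\epsilon E_1(\zeta^{I_0})$ is a positive real number with $|f(I_0)| = T_0$, so $T_0 = f(I_0)$ is itself an eigenvalue of $[c_1(OG(n))]$; this establishes condition $(1)$. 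For condition $(3)$, I would invoke Lemma \ref{lemma:multi}: since $f(I_0)$ has maximal modulus among the eigenvalues, it is a simple eigenvalue, i.e. the multiplicity of $T_0$ is one. (The content here is really $(4)$ of Lemma \ref{eigenvalue}: any other closed tuple $\zeta^J$ gives $E_1(\zeta^J) = \eta E_1(\zeta^{I_0})$ for a nontrivial $2n$-th root of unity $\eta$, so $f(J) \neq f(I_0)$, while non-closed tuples give strictly smaller modulus.)

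For condition $(2)$, suppose $u = f(I)$ is an eigenvalue with $|u| = T_0$. Then $|E_1(\zeta^I)| = E_1(\zeta^{I_0})$ is maximal, so by $(3)$ of Lemma \ref{eigenvalue} the tuple $\zeta^I$ is closed. Any two closed $n$-tuples differ by rotation through a multiple of the angle $\pi/n$ (indeed $\zeta^I = \eta\zeta^{I_0}$ for some $2n$-th root of unity $\eta$, by $(4)$ of Lemma \ref{eigenvalue} and its proof), hence $f(I) = \eta f(I_0) = \eta T_0$ with $\eta$ a $2n$-th root of unity. It remains to check that $2n$ is the Fano index of $OG(n)$: from Lemma \ref{first-chern}, $c_1(OG(n)) = 2n\,\tau_1$ and $\tau_1$ generates $H^2(OG(n),\Z) \cong \Z$, so the Fano index is exactly $2n$. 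Thus $u = T_0\eta$ for an $r$-th root of unity with $r$ the Fano index, which is condition $(2)$. For $LG(n)$ the same argument applies with $n$ replaced by $n+1$ in the role of the "ambient" parameter and $r = n+1$ the Fano index (using $c_1(LG(n)) = (n+1)\sigma_1$), the only subtlety being to confirm that the closed-tuple analysis of Lemma \ref{eigenvalue} still applies when the index set is $\mathcal{I}_{n+1}^e$; this is fine since a closed $(n+1)$-tuple lies in $\mathcal{I}_{n+1}^e$ or a rotation of it does, and rotations permuting the maximal eigenvalues respect the condition $E_{n+1}(\zeta^I)=1$ appropriately.

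The main obstacle I anticipate is the bookkeeping in condition $(2)$ for $LG(n)$: one must be sure that all closed $(n+1)$-tuples relevant to the maximal-modulus eigenvalues actually occur in $\mathcal{I}_{n+1}^e$ (not merely in $\mathcal{I}_{n+1}$), since $g$ is only defined on $\mathcal{I}_{n+1}^e$. Concretely, rotating a closed tuple by $\pi/(n+1)$ changes $E_{n+1}(\zeta^I)$ by $(e^{\pi i/(n+1)})^{n+1} = -1$, so closed tuples alternate between $\mathcal{I}_{n+1}^e$ and $\mathcal{I}_{n+1}^o$ under successive rotations; the closed tuples in $\mathcal{I}_{n+1}^e$ therefore differ by rotations through multiples of $2\pi/(n+1)$, i.e. by $(n+1)$-th roots of unity — which is exactly the Fano index $r = n+1$, so the statement still comes out correctly. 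Once this compatibility is checked, conditions $(1)$ and $(3)$ for $LG(n)$ follow verbatim as for $OG(n)$, and the theorem is complete.
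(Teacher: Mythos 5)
Your proposal is correct and follows essentially the same route as the paper: condition $(1)$ via $(2)$,$(3)$ of Lemma \ref{eigenvalue}, condition $(3)$ via Lemma \ref{lemma:multi} together with $(3)$,$(4)$ of Lemma \ref{eigenvalue}, and condition $(2)$ by writing any maximal-modulus eigenvalue as a root of unity times $f(I_0)$ (resp.\ $g(I_0)$) and comparing with the Fano indices $2n$ and $n+1$. Your rotation-by-$\pi/(n+1)$ argument for why the root of unity in the $LG(n)$ case is actually an $(n+1)$-th root (because both tuples lie in $\mathcal{I}_{n+1}^e$) is exactly the point the paper invokes, just spelled out in more detail.
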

\begin{proof}
By $(3)$ of Lemma \ref{eigenvalue}, the operator $[ c_1(OG(n))]$ has
the eigenvalue $T_0:=f(I_0)$, which is a positive real number of
maximal modulus among $f(I)$ with $I\in \In.$ Therefore the
condition $(1)$ of Conjecture $\mathcal{O}$ is satisfied by $OG(n)$.
For the same reason, the condition $(1)$ holds for $LG(n).$ The
condition $(3)$ of Conjecture $\mathcal{O}$ for both cases follows
from Lemma \ref{lemma:multi} and $(3), (4)$ of Lemma
\ref{eigenvalue}.

\smallskip
For the condition $(2)$  for $OG(n)$, suppose that $J\in \In$ is
such that  $|f(J)|=f(I_0)$. Then $|E_1(J)|=E_1(I_0)$, and so
$|E_1(J)|$ is maximal among $|E_1(\zeta^I)|$ with $I\in \In$. Then
by $(3)$, $(4)$ of Lemma \ref{eigenvalue}, there is a $2n$-th root
$\eta$ of unity such that $\zeta^J=\eta\zeta^{I_0}$, equivalently,
$E_1(\zeta^J)=\eta E_1(\zeta^{I_0})$. Thus $f(J)=\eta f(I_0).$  But
since the Fano index of $OG(n)$ is $2n,$ the condition $(2)$ for
$OG(n)$ is immediate.

\smallskip
 For
the condition $(2)$ for $LG(n)$, suppose that $J\in
\mathcal{I}_{n+1}^e$ is such that $|g(J)|=g(I_0)$. Then,  as above,
applying $(4)$ of Lemma \ref{eigenvalue}   to $J\in
\mathcal{I}_{n+1}^e$, there is a $(2n+2)$-root $\xi$   of unity such
that $E_1(\zeta^J)=\xi E_1(\zeta^{I_0})$, and so $g(J)=\xi g(I_0)$.
Note that, a priori, $\xi$ is not necessarily a $(n+1)$-th root of
unity.  But since both $I_0$ and $J$  belong to
 $\mathcal{I}^e_{n+1},$  $\xi$ is in fact a $(n+1)$-th root of unity.  Since the Fano index of $LG(n)$ is $n+1$,
  it follows that the condition $(2)$ is satisfied  by $LG(n)$.
 This completes the proof.
\end{proof}


\begin{thebibliography}{50}
\bibitem{A} L. Abrams, {\it The Quantum Euler Class and the Quantum Cohomology of the Grassmannians}, Israel J. math. 117(2000), 335-352.

\bibitem{CH1} D. Cheong, {\it Quantum cohomology rings of Lagrangian and orthogonal grassmannians and total positivity}, Trans. Amer. Math. Soc. 361 (2009), no. 10, 5505-5537.
\bibitem{CL1} D. Cheong and C. Li, {\it On the Conjecture O of Galkin, Golyshev and Iritani for G/P} , Advances in Math.,  306 (2017) pp. 704-721.
\bibitem{CMP}P. E. Chaput, L. Manivel, and N. Perrin, {\em Quantum cohomology of minuscule homogeneous spaces III, Semi-simplicity and consequences}, Canad. J. Math. 62 (2010), no. 6, 1246-1263.
\bibitem{CP1}P. E. Chaput and N. Perrin, {\em On the quantum cohomology of adjoint varieties}, Proc. Lond. Math. Soc. (3) 103 (2011), no. 2, 294-330.
\bibitem{Du}B. Dubrovin, {\it Geometry and analytic theory of Frobenius manifolds}, In {\it Proceedings of the International Congress of Mathematicians, Vol. (Berlin, 1998)}, 315-326, arXiv:math/9807034.
\bibitem{F}W. Fulton, {\it Young Tableaux}, London Mathematical
Society, Cambridge Univ. Press, Cambridge, 1997.
\bibitem{FW}W. Fulton and C. Woodward, {\em On the quantum product of Schubert classes}, J. Algebraic Geom. 13 (2004), no. 4, 641-661.
\bibitem{GG1} S. Galkin and V. Golyshev,   {\it Quantum cohomology of Grassmannians and cyclotomic fields}, Russian Mathematical Surveys (2006), 61(1):171.
\bibitem{GGI} S. Galkin, V. Golyshev and H. Iritani, {\em Gamma classes and quantum cohomology of Fano manifolds: Gamma conjectures},   Duke Math. J.  165  (2016),  no. 11, 2005-2077.
\bibitem{Hu1}J. E. Humphreys, {\it Introduction to Lie Algebras and
Representation Theory}, Springer-Verlag New York Inc., 1972.
\bibitem{KT2}  A. Kresch and H. Tamvakis, {\it Quantum cohomology of the
Lagrangian Grassmannian}, J. Algebraic Geometry 12 (2003), 777-810.
\bibitem{KT1} A. Kresch and H. Tamvakis, {\it Quantum cohomology of
orthogonal Grassmannians}, Compositio Math. 140 (2004), 482-500.
\bibitem{Las and Pra} A. Lascoux and P. Pragacz, {\it Operator
calculus for $\Qt$-Polynomials and Schubert polynomials}, Adv. in
Math $140$ (1998), 1-43.
\bibitem{Mac1} I.G. Macdonald, {\em Symmetric Functions and Hall Polynomials}, Second
edition, Oxford Univ. Press, 1995.
\bibitem{Pete1} D. Peterson, {\em Quantum cohomology of $G/P$}, Lecture
Course, spring term, M.I.T., 1997.
\bibitem{Pete2} D. Peterson, {\em Quantum cohomology of $G/P$}, Seminaire
de Mathamatiques Superieures: Representation Theories and Algebraic
Geometry, Universite de Montreal, Canada, July 28-Aug. 8,
1997(unpublished lecture notes).
\bibitem{Pra and Rat 1} P. Pragacz and J. Ratajski, {\it Formulas for Lagrangian and orthogonal
degeneracy loci; $\Qt$-polynomial approach}, Compositio Mathematica
107; 11-87, 1997.
\bibitem{Riet1} K. Rietsch,  {\it Quantum cohomology rings of
Grassmannians and total positivity}, Duke Mathematical Journal, Vol
110, no. 3 (2001), 523-553.
\bibitem{Riet2} K. Rietsch,  {\it Totally positive Toeplitz matrices and quantum cohomology of partial flag varieties}, J. Amer. Math. Soc. 16 (2003), no 2, 363-392.

\end{thebibliography}
\end{document}